\journal{Finite Fields and their Applications}
\newtheorem{theorem}{Theorem}[section]
\newtheorem{lemma}[theorem]{Lemma}
\theoremstyle{definition}
\newtheorem{example}[theorem]{Example}
\newtheorem{problem}[theorem]{Problem}
\newtheorem{remark}[theorem]{Remark}
\renewcommand{\ge}{\geqslant}
\renewcommand{\le}{\leqslant}
\newcommand{\ABinom}[2]{\genfrac{\langle}{\rangle}{0pt}{}{#1}{#2}_{\kern -2pt q}}
\newcommand{\Binom}[2]{{\genfrac{[}{]}{0pt}{}{#1}{#2}}}
\newcommand{\binomq}[2]{{\binom{#1}{#2}}_{\kern-3pt q}}
\newcommand{\binomqq}[2]{{\binom{#1}{#2}}_{\kern-3pt -q}}
\newcommand{\CO}{\mathrm{CO}}
\newcommand{\GL}{{\mathrm{GL}}}
\newcommand{\GO}{{\mathrm{GO}}}
\newcommand{\Sp}{{\mathrm{Sp}}}
\newcommand{\Z}{{\mathbb Z}}
\newcommand{\cP}{{\mathcal{P}}}
\newcommand{\cN}{{\mathcal{N}}}
\newcommand{\cR}{\mathcal{R}}
\newcommand{\cT}{{\mathcal T}}
\newcommand{\cU}{{\mathcal{U}}}
\newcommand{\cW}{{\mathcal{W}}}
\newcommand{\eps}{\varepsilon}
\newcommand{\F}{{\mathbb F}}
\tikzset{every picture/.style={line width=0.11mm}}  
\newcommand{\oPerpSymbol}{\begin{tikzpicture}[scale=0.134]  
  \draw (0,-0.5)--(0,1); \draw (-0.866,-0.5)--(0.866,-0.5);
  \draw (0,0) circle [radius=1];\end{tikzpicture}}
\newcommand{\oPerp}{\mathbin{\raisebox{-1pt}{\oPerpSymbol}}}
\begin{document}
\begin{frontmatter}
\title{The probability of spanning a classical space by two  non-degenerate subspaces of complementary dimensions}
\author[add1]{S.P.~Glasby\texorpdfstring{\corref{cor}}{}}
\ead{Stephen.Glasby@uwa.edu.au}
\author[add1,add2]{Alice C.~Niemeyer}
\ead{Alice.Niemeyer@mathb.rwth-aachen.de}
\author[add1]{Cheryl E.~Praeger}
\ead{Cheryl.Praeger@uwa.edu.au}
\affiliation[add1]{organization=
  {Centre for the Mathematics of Symmetry and Computation, University of Western Australia  },
  addressline={35 Stirling Highway}, 
            city={Perth},
            postcode={6009}, 
            country={Australia.}
}
\affiliation[add2]{organization=
  {Chair for Algebra and Representation Theory,
    RWTH Aachen University},
  addressline={Pontdriesch 10-16},
            city={Aachen},
            postcode={52062}, 
            country={Germany.}
}
\cortext[cor]{Corresponding author}

\begin{abstract}
  Let $n,n'$ be positive integers and let $V$ be an
  $(n+n')$-dimensional vector space over a finite field $\F$ equipped
  with a non-degenerate alternating, hermitian or quadratic form.  We
  estimate the proportion of pairs $(U, U')$, where $U$ is a
  non-degenerate $n$-subspace and $U'$ is a non-degenerate $n'$-subspace
  of $V$, such that $U+ U'=V$
  (usually such spaces $U$ and $U'$ are not perpendicular). The
  proportion is shown to be at least $1-c/|\F|$ for some constant
  $c< 2$ in the symplectic or unitary cases, and $c<3$ in the
  orthogonal case.
\end{abstract}

\begin{keyword}
non-degenerate subspace, complement, finite classical group
\end{keyword}

\end{frontmatter}

\section{Introduction}\label{sec:intro}

Let $n, n'$ be positive integers and let $V$ be an
$(n+n')$-dimensional vector space over a finite field $\F$. The
proportion of pairs of subspaces $U, U'$ of dimensions $n$ and $n'$,
respectively, that intersect trivially, and hence satisfy $U+U'=V$, is
close to 1 if $|\F|$ is large and in all cases is greater than
$1-3/(2|\F|)$, see Lemma~\ref{L:VS}.  Our aim is to estimate this
proportion in the case where $V$ is a symplectic, hermitian or
orthogonal space and both $U$ and $U'$ are non-degenerate. Such an
analogue of Lemma~\ref{L:VS} seems not to have been considered before.

Configurations involving complementary subspaces in vector spaces
arise in a number of different ways; for example, they have been
studied graph theoretically \cites{Cla1992, JJ2016, Songpon},
geometrically \cite{IM1995}, and algorithmically \cite{PSY2015}.  For
different applications the types of subspaces considered are
restricted.  In representation theory they are usually submodules as
in \cite{Yar2013}.  Often the finite vector space $V$ is endowed with
a \textit{classical form}, that is, a non-degenerate sesqui-linear or
quadratic form, and in finite geometry the subspaces are usually
totally isotropic while in computational group theory they are
frequently non-degenerate.

The problem that we address in this paper arises from algorithmic
considerations connected with computations in finite classical
groups~\cite{random,stingray}. In order to show that two isometries,
each leaving invariant a non-degenerate
proper subspace, generate a classical group with high probability (see
for example \cites{DLLO13, PSY2015}), a fundamental problem arises.
Show that, with high probability, for a vector space $V$
endowed with a classical form, two random non-degenerate subspaces
whose dimensions sum to $\dim(V)$, are complements of each other, that
is, they intersect trivially and span $V$.  Our aim is to solve this
problem, and in fact to prove that the probability approaches~$1$ as
$|\F|$ approaches $\infty$. We emphasize that, for algorithmic applications,
an explicit lower bound for this probability, rather than an
asymptotic expression, is required.

\begin{theorem}\label{T:main1}
  Let $n, n'$ be positive integers, let $V=\F^{n+n'}$ be equipped with a
  non-degenerate alternating, hermitian or quadratic linear form, and
  let $c$ be a constant, with the type of form, $|\F|$ and $c$ as in
  Table~$\ref{tab1}$.  Then the proportion of pairs $(U, U')$ of
  non-degenerate subspaces of dimensions $n, n'$ respectively (of given
  type as in Table~$\ref{tab1}$ in the orthogonal case) that span
  $V$ is positive and at least $1-c/|\F|$.
\end{theorem}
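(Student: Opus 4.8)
\medskip\noindent\textit{Proof plan.}
By Witt's extension theorem the isometry group $G=\mathrm{Isom}(V)$ acts transitively on non-degenerate $n$-subspaces of $V$ of the prescribed type, so the proportion in question equals $M/N$, where $N$ is the number of non-degenerate $n'$-subspaces of $V$ of the prescribed type and $M$ is the number of those that span $V$ together with one fixed such subspace $U$ --- equivalently, since $n+n'=\dim V$, the number of non-degenerate complements of $U$ of the prescribed type. By symmetry we may assume $n\ge n'$. The denominator is classical: the $G$-stabiliser of a non-degenerate $n'$-subspace $W$ is $\mathrm{Isom}(W)\times\mathrm{Isom}(W^{\perp})$, so $N=|G|/\bigl(|\mathrm{Isom}(W_0)|\,|\mathrm{Isom}(W_0^{\perp})|\bigr)$ for a model $W_0$, which I would record as an explicit product. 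The positivity asserted in the theorem then follows from the inequality $M/N\ge1-c/|\F|$, since $c<|\F|$ in every line of Table~\ref{tab1}; the finitely many small triples $(|\F|,n,n')$ for which this fails are precisely the ones excluded there.

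Writing $V=U\oPerp U^{\perp}$, every complement of $U$ is the graph $\{\,w+f(w):w\in U^{\perp}\,\}$ of a unique $f\in\mathrm{Hom}(U^{\perp},U)$, so there are exactly $|\F|^{nn'}$ complements --- the classical-space analogue of the complement count underlying Lemma~\ref{L:VS}. Fix a basis $(e_i)$ of $U^{\perp}$ with Gram matrix $A$, and let $B$ be a Gram matrix of $U$ and $F$ the matrix of $f$; since $U\perp U^{\perp}$ the mixed terms vanish, so the form on $\mathrm{graph}(f)$ has Gram matrix $A+F^{\dagger}BF$ in the basis $\{e_i+f(e_i)\}$, where $F^{\dagger}$ is the transpose of $F$ (the conjugate transpose in the hermitian case) and, for a quadratic form, one uses the identity $Q(w+f(w))=Q(w)+Q(f(w))$. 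Hence $M$ counts the $f$ for which the form with Gram matrix $A+F^{\dagger}BF$ is non-degenerate of the prescribed type: as $A$ is invertible, non-degeneracy is the condition $-1\notin\operatorname{spec}(f^{\dagger}f)$, and in the orthogonal case there is the additional requirement that the discriminant of $A+F^{\dagger}BF$ lie in the prescribed square class.

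To evaluate $M$ I would exploit the action of $\mathrm{Stab}_G(U)=\mathrm{Isom}(U)\times\mathrm{Isom}(U^{\perp})$ on $\mathrm{Hom}(U^{\perp},U)$ by $f\mapsto h_1fh_2^{-1}$, under which $A+F^{\dagger}BF$ is transformed by congruence (via $h_2$), so the isometry type of $\mathrm{graph}(f)$ is an invariant. The number of $f$ with a prescribed pullback form $f^{\dagger}Bf=\beta$ on $\F^{n'}$ is, by Witt's theorem, the index $|\mathrm{Isom}(U)|/\bigl|\mathrm{Isom}\bigl((U,B)\ominus(\F^{n'},\beta)\bigr)\bigr|$ --- read as $0$ unless $(\F^{n'},\beta)$ embeds isometrically in $(U,B)$ --- and it depends only on the isometry class of $\beta$; summing these indices over the few classes $[\beta]$ for which $A+\beta$ is non-degenerate of the prescribed type, with the bookkeeping (in the orthogonal case) of which representatives $\beta$ place $A+\beta$ in the correct discriminant class, yields a closed expression for $M$. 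Its dominant term is the generic $\beta$ --- non-degenerate, of the type making $A+\beta$ have the prescribed type --- and measured against the product formula for $N$ this term alone gives $M/N\ge1-c/|\F|$, the remaining classes $[\beta]$ (those with $\beta$ degenerate, or non-degenerate of the wrong type) contributing terms smaller by a factor of $|\F|$ or more.

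It then remains to collect the constants case by case. For alternating and hermitian forms there is a single isometry type of non-degenerate $n'$-subspace, so the only loss comes from degenerate $\beta$ and one obtains $c<2$; for quadratic forms the extra discriminant condition, together with the characteristic-$2$ subtleties in the very definition of non-degeneracy, push the constant to $c<3$, and the small triples with $M=0$ are disposed of separately. The step I expect to be hardest is making the estimate of the non-dominant terms \emph{uniform} in $n$ and $n'$: a Schwartz--Zippel or Lang--Weil bound on the hypersurface $\det(A+F^{\dagger}BF)=0$ is not uniform, and a naive union bound over the vectors of $U^{\perp}$ degrades the constant --- already for alternating forms it yields only $c\to2$ as $n,n'\to\infty$ with $|\F|=2$ --- so one genuinely needs the exact orbit count, a careful telescoping of the resulting product (in parallel with the product formula of Lemma~\ref{L:VS}), and, in the orthogonal case, a meticulous tracking of discriminants in characteristic $2$.
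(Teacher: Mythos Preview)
Your approach is genuinely different from the paper's. The paper never attempts to compute $M$ directly. Instead it bounds the \emph{complementary} quantity
\[
\varphi = \frac{|\{U'\in\cU': U\cap U'\ne 0\}|}{|\cU'|}
\]
by observing that any non-trivial $U\cap U'$ contains a $1$-subspace $W$, and then bounding
\[
\varphi \le \sum_{W\in\cN_1(U)}\frac{|\cU'(W)|}{|\cU'|} + \sum_{W\in\cP_1(U)}\frac{|\cU'(W)|}{|\cU'|},
\]
where $\cU'(W)=\{U'\in\cU':W\le U'\}$. A double-counting identity $|\cW|\cdot|\cU'(W)| = |\cW(U')|\cdot|\cU'|$ (with $\cW$ a $G$-orbit of $1$-subspaces) reduces each summand to a ratio of point counts, and the constants $5/3$, $9/5$, $43/16$ fall out of explicit formulas for $|\cP_1(\,\cdot\,)|$ and $|\cN_1(\,\cdot\,)|$. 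No parametrization of complements, no pullback forms, no orbit enumeration on $\mathrm{Hom}(U^\perp,U)$ are needed. In the symplectic case with $q=2$ the paper refines this to a three-term inclusion--exclusion over $1$-, $2$-, and $3$-subspaces, which is exactly the place where you correctly anticipate that a crude union bound only gives $c\to 2$.

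Your plan has a genuine gap at the step where you ``sum over the few classes $[\beta]$ for which $A+\beta$ is non-degenerate of the prescribed type''. Whether $A+\beta$ is non-degenerate is \emph{not} an invariant of the $\GL_{n'}$-congruence class $[\beta]$: with $A$ the standard symplectic form on $\F_q^2$ and $q$ odd, both $\beta_1=A$ and $\beta_2=-A$ lie in the same (non-degenerate) class, yet $A+\beta_1=2A$ is non-degenerate and $A+\beta_2=0$ is not. What \emph{is} invariant is the isometry type of $A+\beta$ under the action of $\mathrm{Isom}(A)\subset\GL_{n'}$, so the correct orbit decomposition is by pairs of forms $(A,\beta)$ up to simultaneous congruence. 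Those orbits are far more numerous (they are parametrized by pencil invariants, not by $[\beta]$ alone), and the count of $f$ with $f^{\dagger}Bf=\beta$ is constant on $\GL$-classes but not on $\mathrm{Isom}(A)$-orbits in any simplifying way. So the ``closed expression for $M$'' you envisage does not come out of the bookkeeping you describe; you would need either a pencil classification or to revert to bounding $|\F|^{nn'}-M$ from above, at which point you are essentially back to the paper's strategy. Your graph parametrization is correct and could in principle yield exact values of $M$, but the paper's union bound over $1$-subspaces reaches the stated constants with far less work.
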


Instead of proving a lower bound for  the pairs of subspaces we seek,
our strategy is  to determine an upper bound for the pairs of subspaces that intersect non-trivially.
In fact, many authors have considered problems concerning families of
subspaces which pairwise intersect non-trivially, for example,
generalisations of the Erd\H{o}s-Ko-Rado theorem  for sets, or the
Hilton-Milnor Theorem which gives upper bounds for the sizes of such families
of subspaces (see \cites{HMtheorem,GLW,Hsieh}  or related references).
Unfortunately we cannot exploit these results to prove our theorem: usually
our families of subspaces do not have the pairwise intersecting
property, and our families are orbits under a finite classical group. Moreover,
it is essential that the subspace dimensions sum to $\dim(V).$
A result \cite{DeBoeck}*{Lemma~4}
most closely related in spirit to ours determines the proportion 
of pairs of subspaces $(U,U')$ of a hermitian space $V$,  such that $U$ is
totally isotropic  of dimension $\dim(V)/2$, $U'$ is non-degenerate
of given dimension at most $\dim(V)/2$, and $U\cap U' = \{0\}.$
Other authors \cite{GW} have considered orbits of subspaces of finite
classical groups but study different aspects. In our proof we exploit the 
symmetry of classical geometry and employ the combinatorial technique
of ``double counting'', see Section~\ref{S:double}.

\begin{center}
\begin{table}[!ht]
  \caption{The constant $c$ in Theorem~\ref{T:main1}.}
  \centering
\begin{tabular}{lllll}  \toprule
  Case &Form & $|\F|$ & $c$ & Conditions \\
  \midrule
  unitary &hermitian & $q^2$ & $\frac{9}{5}$ & $q\ge2$, $n,n'\ge1$, $(n,n', q)\ne(1,1,2)$, \\[1.5mm]
  symplectic &alternating & $q$ &$\frac{5}{3}$ & $q\ge2$, and $n,n'\ge2$ are even,\\[1.5mm]
  orthogonal & {quadratic} & $q$ &$\frac{43}{16}$  
    & $q\ge3$, $n,n'\ge1$, $(n,n', q)\ne(1,1,3)$,\\
  &&&& if one of $n,n'$ is odd, then $q$ is odd.\\
  \bottomrule
\end{tabular} \label{tab1}
\end{table}
\end{center}

In the orthogonal case we require that $U$ and $U'$ lie in specified orbits
under the isometry group. Notation for the ``type'' of an orbit is given in
Remark~\ref{R:type}. Our methods are not strong enough to prove
Theorem~\ref{T:main1} in the orthogonal case when
$|\F|=2$ with a sufficiently small value of the constant $c$.
However, even though it is not covered by Theorem~\ref{T:main1}, 
extensive computer experimentation suggests that
the result holds in this case.

\begin{problem}
Show that Theorem~$\ref{T:main1}$ holds in the orthogonal case
with $|\F|=2$ (for some constant $c<2$).
\end{problem}

Theorem~\ref{T:main1} is a direct consequence of Theorems~\ref{T:U}, \ref{T:Sp}, and \ref{T:O}.

\section{Notation and strategy for proving Theorem~\texorpdfstring{\ref{T:main1}}{}} \label{S:strategy}\label{S:proof1}

In this section we establish the notation and hypotheses
used throughout this paper.  First, we prove the analogue of
Theorem~\ref{T:main1} mentioned in Section~\ref{sec:intro}.

\begin{lemma}\label{L:VS}
Let $q$ be a power of a prime and  let $n,  n'$ be  positive integers.
Let   $V=(\F_q)^{n+n'}$ be a vector space and let $\binom{V}{n}$ denote the
set of $n$-subspaces of $V$. Then the proportion $\rho$
of pairs $(U,U')$ in
$\binom{V}{n}\times\binom{V}{n'}$ with $U\cap U'=0$ satisfies
$\rho=\prod_{i=1}^{n'} \frac{1-q^{-i}}{1-q^{-n-i}}>1-3/(2q)$.
\end{lemma}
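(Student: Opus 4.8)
The plan is to count ordered pairs $(U,U')$ with $U\cap U'=0$ directly, then pass to the claimed product formula, and finally to bound that product below. First I would count in stages: choose $U\in\binom{V}{n}$ (any of the $\binomq{n+n'}{n}$ choices), and then count the $n'$-subspaces $U'$ with $U\cap U'=0$. A subspace $U'$ disjoint from $U$ is the same as an $n'$-subspace of $V$ that maps isomorphically onto its image in the quotient $V/U\cong\F_q^{n'}$; equivalently, $U'$ is the graph of a linear map $\F_q^{n'}\to U$ after identifying a complement of $U$ with $V/U$. A cleaner route: the number of ordered linearly independent $(n+n')$-tuples $(v_1,\dots,v_{n+n'})$ extending a fixed basis of $U$ is $\prod_{i=0}^{n'-1}(q^{n+n'}-q^{n+i})$, and each disjoint $U'$ arises from $\prod_{i=0}^{n'-1}(q^{n'}-q^i)$ such tuples; dividing gives the count of disjoint $U'$ for fixed $U$. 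Then $\rho$ is this count divided by $\binomq{n'}{n'}^{-1}$... more directly,
\[
\rho=\frac{\#\{(U,U'):U\cap U'=0\}}{\binomq{n+n'}{n}\binomq{n+n'}{n'}},
\]
and after cancelling the Gaussian binomial coefficients against the falling-factorial counts, the ratio telescopes to $\prod_{i=1}^{n'}\frac{1-q^{-i}}{1-q^{-n-i}}$. This step is just bookkeeping with $q$-analogues, so I would present it compactly.

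For the inequality, the key observation is that every factor $\frac{1-q^{-i}}{1-q^{-n-i}}$ is less than $1$, but their product is still close to $1$. I would write $\rho=\prod_{i=1}^{n'}\bigl(1-\frac{q^{-i}-q^{-n-i}}{1-q^{-n-i}}\bigr)$ and use the elementary bound $\prod(1-a_j)\ge 1-\sum a_j$ for $a_j\in[0,1)$. Here $a_i=\frac{q^{-i}(1-q^{-n})}{1-q^{-n-i}}\le \frac{q^{-i}}{1-q^{-1}}$ (since $1-q^{-n}\le 1$ and $1-q^{-n-i}\ge 1-q^{-2}$ is not quite enough, so I would instead bound the denominator by $1-q^{-1-i}\ge\cdots$; more simply $1-q^{-n-i}\ge 1-q^{-2}$ for $i\ge1$, giving $a_i\le q^{-i}/(1-q^{-2})$). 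Then $\sum_{i=1}^{n'}a_i\le \frac{1}{1-q^{-2}}\sum_{i\ge1}q^{-i}=\frac{1}{1-q^{-2}}\cdot\frac{q^{-1}}{1-q^{-1}}=\frac{q^{-1}}{(1-q^{-1})(1-q^{-2})}$. For $q\ge2$ this last expression is at most... $\tfrac12/(\tfrac12\cdot\tfrac34)=\tfrac43$, which is too weak; so I would tighten by keeping the first term exact and bounding the tail, i.e. handle $i=1$ separately (where the denominator is $1-q^{-n-1}\ge 1-q^{-2}$) and note the remaining terms contribute a geometric tail starting at $q^{-2}$.

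The main obstacle is getting the constant $3/(2q)$ rather than something weaker: the naive union bound $\sum a_i$ overshoots, so the care goes into choosing how sharply to bound each denominator $1-q^{-n-i}$. The cleanest fix is to note $1-q^{-n-i}\ge 1-q^{-1-i}$ is false; instead use that the product $\prod_{i=1}^{n'}(1-q^{-n-i})^{-1}<\prod_{i\ge n+1}(1-q^{-i})^{-1}$, and the numerator $\prod_{i=1}^{n'}(1-q^{-i})\ge\prod_{i\ge1}(1-q^{-i})$, so $\rho> \prod_{i=1}^{n}(1-q^{-i})\cdot\text{(something)}$... Actually the slickest argument: $\rho=\prod_{i=1}^{n'}\frac{1-q^{-i}}{1-q^{-n-i}}\ge \prod_{i=1}^{n'}(1-q^{-i})\ge \prod_{i=1}^{\infty}(1-q^{-i})$, and then invoke the standard estimate $\prod_{i\ge1}(1-q^{-i})>1-q^{-1}-q^{-2}+\cdots$; but $1-q^{-1}-q^{-2}$ at $q=2$ is $1/4$, still too weak. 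So the real point is that the denominator genuinely helps: $\frac{1-q^{-i}}{1-q^{-n-i}}>1-q^{-i}+q^{-i}q^{-n-i}$-type corrections partially restore the loss. I would therefore prove the bound by the telescoping/union-bound approach above but track the $i=1$ term as $\frac{1-q^{-1}}{1-q^{-n-1}}\ge 1-q^{-1}$ and the product of the rest as $\ge 1-\sum_{i\ge2}q^{-i}/(1-q^{-1})=1-\frac{q^{-2}}{(1-q^{-1})^2}$, multiply, and check directly that for $q\ge2$ the result exceeds $1-\tfrac{3}{2q}$ — a short case check, with $q=2$ being the tight case, and monotonicity handling $q\ge3$.
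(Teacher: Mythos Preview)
Your derivation of the product formula is fine and essentially matches the paper's (the paper fixes $U$ by transitivity of $\GL(V)$, notes there are $q^{nn'}$ complements, and divides by the Gaussian binomial to get the same product).

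The gap is in the inequality. You actually had the correct argument in your hands and discarded it by a miscalculation. You observed
\[
\rho=\prod_{i=1}^{n'}\frac{1-q^{-i}}{1-q^{-n-i}}\ge\prod_{i=1}^{n'}(1-q^{-i})>\prod_{i=1}^{\infty}(1-q^{-i})>1-q^{-1}-q^{-2},
\]
the last step being a standard estimate (the paper cites \cite{NP}*{Lemma~3.5}). At $q=2$ this gives $\rho>1/4$, and you declared this ``too weak''; but $1-3/(2q)=1/4$ at $q=2$, so the target is met exactly---the strict inequality $\rho>1-3/(2q)$ comes from the strict inequality $\prod_{i\ge1}(1-q^{-i})>1-q^{-1}-q^{-2}$ (numerically $\approx 0.289>0.25$ at $q=2$). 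For general $q\ge2$, $1-q^{-1}-q^{-2}\ge 1-3/(2q)$ is equivalent to $q^{-1}\le 1/2$, which holds. This is precisely the paper's proof.

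By contrast, the alternative you settle on at the end does not work: bounding the tail $i\ge2$ by $1-\sum_{i\ge2}q^{-i}/(1-q^{-1})=1-q^{-2}/(1-q^{-1})^2$ gives, at $q=2$, the value $1-\tfrac{1/4}{1/4}=0$, so the product of your two factors is $0$, far below $1-3/(2q)=1/4$. The moral is that once you drop the denominators $1-q^{-n-i}$ (which only help), the infinite-product lower bound $1-q^{-1}-q^{-2}$ is already sharp enough; no case-splitting is needed.
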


\begin{proof}
  As $\GL(V)$ is transitive on $\binom{V}{n}$, we fix some
  $U\in\binom{V}{n}$, and then $\rho$ is the proportion of
  $U'\in\binom{V}{n'}$ satisfying $U\cap U'=0$. There are $q^{nn'}$ complements
  $U'\in\binom{V}{n'}$ to~$U$ and
  \[
    \left|\binom{V}{n'}\right|
    =\frac{|\GL_{n+n'}(q)|}{|\GL_n(q)||\GL_{n'}(q)|q^{nn'}}
    =q^{nn'}\prod_{i=1}^{n'}\frac{1-q^{-n-i}}{1-q^{-i}}.
  \]
  Hence
  $\rho=q^{nn'}/\left|\binom{V}{n'}\right|
  =\prod_{i=1}^{n'}\frac{1-q^{-i}}{1-q^{-n-i}}$.
  Therefore  $\rho\ge \prod_{i=1}^{n'} (1-q^{-i})>\prod_{i=1}^\infty (1-q^{-i})$, and so
  $\rho>1-q^{-1}-q^{-2}\ge1-3/(2q)$ by~\cite{NP}*{Lemma~3.5}.
\end{proof}

Let $n,  n'$ be  positive integers and 
let $q$ be a power of a prime $p$. 
Let $V$  be an $(n+n')$-dimensional classical space over a field $\mathbb{F}$,
that is, a  non-degenerate symplectic or orthogonal
space over $\mathbb{F}=\F_q$, or a non-degenerate hermitian space over 
$\mathbb{F}=\F_{q^2}$. 

The isometry type of a subspace of $V$ depends only on its dimension in the 
symplectic and unitary cases, but is more complicated to
describe in the orthogonal case, see Remark~\ref{R:type}.
In the symplectic and unitary cases, let $\cU=\Binom{V}{n}$
(resp. $\cU'=\Binom{V}{n'}$) denote the set
of non-degenerate $n$-subspaces (resp. $n'$-subspaces) of $V$.
In the case of an orthogonal space of type $\eps$, let
$\cU=\Binom{V}{n}^\eps_\sigma$ and
$\cU'=\Binom{V}{n'}^\eps_{\sigma'}$ 
denote the set of non-degenerate $n$- and $n'$-subspaces of~$V$ with
subspace type $\sigma$  and $\sigma'$, 
 respectively, as in Remark~\ref{R:type}.

We will show that the 
proportion of pairs
$(U,U')\in \cU\times \cU'$ satisfying $U\cap U'=0$ is bounded away from zero, and is close to $1$ if $q$ is large.
The isometry group
$G(V)$ leaves invariant the set of  the pairs $(U,U')\in \cU \times \cU'$ satisfying $U\cap U'=0$,
and by Witt's Theorem, $G(V)$ is transitive on~$\cU$. We henceforth
fix a subspace $U\in \cU$ and note that
\[
\frac{|\{(U,U')\in \cU
  \times  \cU'\colon U\cap U'=0\}|}{|\cU\times \cU'|}=
  \frac{|\{U'\in \cU'\colon U\cap U'=0\}|}{|\cU'|}.
\]
Rather than showing that the above proportion is bounded away from zero, 
we find an upper bound, less than~$1$,  for the quantity
\begin{equation}\label{E:rho}
  \varphi\vcentcolon=\frac{|\{U'\in \cU'\colon U\cap U'\ne0\}|}{|\cU'|}.
\end{equation}
We use the following strategy to bound $\varphi$. If
$U\cap U'$ is not trivial, it must contain a $1$-subspace. Thus
\[
  \{U'\in \cU'\colon U\cap U'\ne0\}=\bigcup_{W}\;\{U'\in \cU'\colon W\le U\cap U'\},
\]
where $W$ ranges over the $1$-subspaces of $U$.
A $1$-subspace $W$ either lies in the set $\cN_1(U)$ of
non-degenerate $1$-subspaces of $U$, or the
set $\cP_1(U)$ of totally isotropic/singular $1$-subspaces of~$U$.
(We say that $W$ is \emph{totally isotropic} in the symplectic and unitary cases if $W\le W^\perp$ and \emph{totally singular} in the orthogonal case if $Q(W)=0$, which implies $W\le W^\perp$. Since $\dim(W)=1$ we sometimes omit the adjective ``totally''.)
For $W\le U$, let 
\[
  \cU'(W)=\{U'\in \cU'\colon W\le U'\}.
\]
Thus we have 
\[
  \{U'\in \cU'\colon U\cap U'\ne0\} = \left(\bigcup_{W\in\cP_1(U)} \cU'(W)\right)
  \cup \left(\bigcup_{W\in\cN_1(U)} \cU'(W)\right).
\]
Hence we have the upper bound
\begin{equation}\label{E:rho2}
  \varphi= \frac{|\{U'\in \cU'\colon U\cap U'\ne0\}|}{|\cU'|}
  {\le}\, \frac{|\bigcup_{W\in\cP_1(U)} \cU'(W)|}{|\cU'|} {+}
   \frac{|\bigcup_{W\in\cN_1(U)} \cU'(W)|}{|\cU'|}.
\end{equation}
  
In the symplectic case the set $\cN_1(U)$ is empty.
Given an arbitrary element $U'$ of $\cU$, we prove in
Lemma~\ref{T:IE} that
\begin{equation}\label{E:rhoSp}
  \varphi =\frac{\left|\bigcup_{W\in\cP_1(U)} \cU'(W)\right|}{|\cU'|} \le
  \frac{|\cP_1(U)||\cP_1(U')|}{|\cP_1(V)|}
\end{equation}
and moreover that $\varphi < 1.5/q$ if $q \ge 3.$ The exceptional case when $q=2$ is handled in the remainder of Section~\ref{S:Sp}, completing the proof
of Theorem~\ref{T:Sp} that $\varphi<5/(3q)$ for $n,n',q\ge2$.

We deal with the unitary and orthogonal cases in Sections~\ref{S:U} and~\ref{S:O} respectively. In both cases the isometry group $G(V)$ is transitive on the isotropic/singular $1$-subspaces of $V$ and we show, in the proof of Theorem~\ref{T:U} (see Equation~\eqref{E:Toad}) and Lemma~\ref{L:cP1} respectively, that 
\begin{equation}\label{E:rhoUO-P}
\frac{\left|\bigcup_{W\in\cP_1(U)} \cU'(W)\right|}{|\cU'|}
\le \frac{|\cP_1(U)||\cP_1(U')|}{|\cP_1(V)|}=\vcentcolon  \frac{c_2}{q|\mathbb{F}|}
\end{equation}
obtaining an explicit estimate for $c_2$.
In the unitary case $G(V)$ is also transitive on the non-degenerate
$1$-subspaces of~$V$ and we show in the proof of Theorem~\ref{T:U} (see Equation~\eqref{E:Frog}), that 
\begin{equation}\label{E:rhoU-N}
  \frac{ \left|\bigcup_{W\in\cN_1(U)} \cU'(W)\right|}{|\cU'|}
  \le  \frac{|\cN_1(U)||\cN_1(U')|}{|\cN_1(V)|}
  =\vcentcolon\frac{c_{1,\bf U}}{|\mathbb{F}|}
\end{equation}
and find an explicit estimate for $c_{1,\bf U}$.

The orthogonal case requires a more delicate analysis because the
isometry group $G(V)$ has at most two orbits on non-degenerate $1$-subspaces
of~$V$. For this reason, the definition~\eqref{E:rhoU-N} of $c_{1,\bf U}$
is adapted in the orthogonal case:
\begin{equation}\label{E:rhoO-N}
  \frac{ \left|\bigcup_{W\in\cN_1(U)} \cU'(W)\right|}{|\cU'|}
  \le   \frac{ \sum_{W\in\cN_1(U)} |\cU'(W)|}{|\cU'|}
  =\vcentcolon  \frac{c_{1,\bf O}}{|\mathbb{F}|}. 
\end{equation}
An intricate proof in  Lemma~\ref{L:cN1} shows that $c_{1,\bf O}$ is bounded
by explicit functions in $n, n', q$. Thus in both the unitary
and orthogonal cases we obtain  
\begin{equation}\label{E:rhoUO}
  \varphi \le \frac{c_{1,\bf U}}{|\F|} + \frac{c_2}{q|\F|}\qquad\textup{and}\qquad
  \varphi \le \frac{c_{1,\bf O}}{|\F|} + \frac{c_2}{q|\F|}. 
\end{equation}
This allows us to prove that $\varphi < c/|\mathbb{F}|$ for a
small constant $c$, see Theorems~\ref{T:U} and~\ref{T:O}.
In the orthogonal case, our proof needs $q>2$
in order to obtain $c<|\F|$ and hence $\varphi<1$.
(In the unitary and orthogonal sections, namely Sections~\ref{S:U} and~\ref{S:O},
we write $c_1$ rather than $c_{1,\bf U}$ and $c_{1,\bf O}$ as the meaning
is clear.)

\section{Double counting lemma}\label{S:double}

The following simple result is surprisingly effective in our numerous
different contexts when solving estimation problems. 
Let $\cW$ and $\cU'$ be sets of subspaces of~$V$. Define

\begin{align*}
    \cU'(W)&=\{U'\in \cU'\mid W\le U'\}\quad\,\textup{for fixed $W\in \cW$, and}\\
    \cW(U')&=\{W\in \cW\mid W\le U'\}\quad\textup{for fixed $U'\in \cU'$}.
\end{align*}

\begin{lemma}[Double counting lemma]\label{L:DC}
  Let $G\le\GL(V)$ and let $\cW$ and $\cU'$ be $G$-orbits on the set of all
  subspaces of~$V$.
  For fixed $W\in \cW$, $U'\in \cU'$ and $\cU'(W), \cW(U')$ as above, we have
  \[
  |\cW|\cdot|\cU'(W)|=|\cW(U')|\cdot|\cU'|.
  \]
\end{lemma}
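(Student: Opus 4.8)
The plan is a standard double-counting argument applied to the incidence set
\[
  I = \{(W,U') \in \cW \times \cU' \mid W \le U'\}.
\]
First I would count $|I|$ by summing over the first coordinate, obtaining $|I| = \sum_{W \in \cW} |\cU'(W)|$. The key observation is that the summand $|\cU'(W)|$ is independent of the choice of $W \in \cW$: since $\cW$ is a single $G$-orbit, any two of its members $W_1, W_2$ satisfy $W_2 = gW_1$ for some $g \in G$, and then the map $U' \mapsto gU'$ restricts to a bijection from $\cU'(W_1)$ to $\cU'(W_2)$, because $W_1 \le U'$ if and only if $W_2 \le gU'$, and $gU' \in \cU'$ as $\cU'$ is $G$-invariant. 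Hence $|I| = |\cW| \cdot |\cU'(W)|$ for any fixed $W \in \cW$.

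Next I would count $|I|$ by summing over the second coordinate, giving $|I| = \sum_{U' \in \cU'} |\cW(U')|$; by the symmetric argument (now using that $\cW$ is $G$-invariant) the summand $|\cW(U')|$ is independent of $U' \in \cU'$, so $|I| = |\cU'| \cdot |\cW(U')|$ for any fixed $U' \in \cU'$. Equating the two expressions for $|I|$ yields $|\cW| \cdot |\cU'(W)| = |\cW(U')| \cdot |\cU'|$, as required.

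There is essentially no obstacle here; the one point worth stating carefully is that each of the two sums has a constant summand, which uses the $G$-invariance of both orbits $\cW$ and $\cU'$, not merely transitivity of $G$ on either one. Note that $\cW$ and $\cU'$ need not be the same orbit, and in the applications of this lemma they will typically be an orbit of isotropic or non-degenerate $1$-subspaces together with an orbit of $n'$-subspaces.
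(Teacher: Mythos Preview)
Your proof is correct and follows essentially the same approach as the paper: both introduce the incidence set $\Omega=\{(W,U')\in\cW\times\cU'\mid W\le U'\}$, observe that the $G$-action gives bijections $\cU'(W)\to\cU'(W^g)$ and $\cW(U')\to\cW((U')^g)$ so that each summand in the two row/column counts is constant, and then equate the two counts of $|\Omega|$. The paper's write-up is slightly terser, but the ideas are identical.
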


\begin{proof}
  Let $\Omega=\{(W,U')\in \cW\times \cU'\mid W\le U'\}$. Observe that
  \begin{equation*}
    \{(W,U')\mid W\in \cW, U'\in \cU'(W)\}=
    \Omega=\{(W,U')\mid U'\in \cU', W\in \cW(U')\}.
  \end{equation*}
  Since $\cW$ and $\cU'$ are $G$-invariant, it is easy to show 
  for all $g\in G$ and fixed $(W,U')\in\cW\times\cU'$ that $\cU'(W)^g=\cU'(W^g)$
  and $\cW(U')^g=\cW((U')^g)$. Thus we have
  $|\cU'(W^g)|=|\cU'(W)|$ and $|\cW((U')^g)|=|\cW(U')|$ for all $g\in G$.
  It follows from  the above display  that
  $|\cW|\cdot|\cU'(W)|=|\Omega|=|\cW(U')|\cdot|\cU'|$
  for any chosen $(W,U')\in \cW\times \cU'$, as claimed.
\end{proof}

\section{The unitary case}\label{S:U}

Let $V=(\F_{q^2})^{n+n'}$ be a non-degenerate hermitian space where $n,n'\ge1$, and let
$\cU=\Binom{V}{n}$ and $\cU'=\Binom{V}{n'}$, as in Section~\ref{S:proof1}.
In this section we prove Theorem~\ref{T:main1} in the unitary case by establishing the
following result which determines an upper bound for the proportion $\varphi$ in~\eqref{E:rho}.

\begin{theorem}\label{T:U}
  Suppose that $n,n'\ge1$ and $V=(\F_{q^2})^{n+n'}$ is a non-degenerate hermitian
  space. Let $\cU=\Binom{V}{n}$ and $\cU'=\Binom{V}{n'}$,
  and fix $U\in \cU$. Then
  \[
  \varphi=\frac{|\{U'\in \cU'\mid U\cap U'\ne0\}|}{|\cU'|}
  \le\frac{|\cN_1(U)||\cN_1(U')|}{|\cN_1(V)|}
  +\frac{|\cP_1(U)||\cP_1(U')|}{|\cP_1(V)|}
  \le\frac{9}{5q^2}
  \]
  holds if $(n,n',q)\ne(1,1,2)$, and $\varphi\le\frac{2}{q^2}$ always holds.
\end{theorem}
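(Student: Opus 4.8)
The plan is to prove the two inequalities separately: the first is a formal consequence of the Double Counting Lemma together with Witt's theorem, and the second is a numerical estimate based on the known formulae for the numbers of isotropic and non-degenerate $1$-subspaces of a hermitian space.

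\emph{The first inequality.} Starting from the bound \eqref{E:rho2}, I bound each of the two unions on the right by the corresponding sum of the sizes of its members. By Witt's theorem $G(V)$ is transitive on the set $\cP_1(V)$ of isotropic $1$-subspaces of $V$, and on $\cU'=\Binom{V}{n'}$; hence the Double Counting Lemma~\ref{L:DC}, applied with $\cW=\cP_1(V)$, gives $|\cU'(W)|=|\cP_1(U')|\,|\cU'|/|\cP_1(V)|$ for every $W\in\cP_1(V)$. Here I use that, $U'$ being non-degenerate, a $1$-subspace of $V$ contained in $U'$ is isotropic in $V$ if and only if it is isotropic in $U'$, so that $\{W\in\cP_1(V):W\le U'\}=\cP_1(U')$. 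Since also $\cP_1(U)\subseteq\cP_1(V)$, summing over $W\in\cP_1(U)$ yields
\begin{equation}\label{E:Toad}
\frac{\bigl|\bigcup_{W\in\cP_1(U)}\cU'(W)\bigr|}{|\cU'|}\le\frac{\sum_{W\in\cP_1(U)}|\cU'(W)|}{|\cU'|}=\frac{|\cP_1(U)|\,|\cP_1(U')|}{|\cP_1(V)|}.
\end{equation}
As $G(V)$ is also transitive on the non-degenerate $1$-subspaces of $V$, the same argument with $\cP_1$ replaced by $\cN_1$ gives
\begin{equation}\label{E:Frog}
\frac{\bigl|\bigcup_{W\in\cN_1(U)}\cU'(W)\bigr|}{|\cU'|}\le\frac{|\cN_1(U)|\,|\cN_1(U')|}{|\cN_1(V)|}.
\end{equation}
Adding \eqref{E:Toad} and \eqref{E:Frog} to \eqref{E:rho2} proves the first inequality of the theorem.

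\emph{The second inequality.} For an $m$-dimensional non-degenerate hermitian space over $\F_{q^2}$ one has $|\cN_1|=q^{m-1}(q^m-(-1)^m)/(q+1)$ and $|\cP_1|=(q^m-(-1)^m)(q^{m-1}-(-1)^{m-1})/(q^2-1)$. Substituting these (with $m$ equal in turn to $n$, $n'$ and $n+n'$) makes both quotients telescope to
\begin{align*}
\frac{|\cN_1(U)|\,|\cN_1(U')|}{|\cN_1(V)|}&=\frac{(q^n-(-1)^n)(q^{n'}-(-1)^{n'})}{q(q+1)(q^{n+n'}-(-1)^{n+n'})},\\
\frac{|\cP_1(U)|\,|\cP_1(U')|}{|\cP_1(V)|}&=\frac{(q^n-(-1)^n)(q^{n-1}-(-1)^{n-1})(q^{n'}-(-1)^{n'})(q^{n'-1}-(-1)^{n'-1})}{(q^2-1)(q^{n+n'}-(-1)^{n+n'})(q^{n+n'-1}-(-1)^{n+n'-1})}.
\end{align*}
I then estimate the two quotients, distinguishing cases by the parities of $n$ and $n'$ (which fix all the signs $(-1)^n,(-1)^{n-1},(-1)^{n+n'},\dots$). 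The first quotient is close to $1/\bigl(q(q+1)\bigr)$ in general and is largest when $\min(n,n')=1$; there the second quotient vanishes, because a $1$-dimensional hermitian space has no isotropic vectors, and (taking $n=1$) the first quotient reduces to $(q^{n'}-(-1)^{n'})/\bigl(q(q^{n'+1}-(-1)^{n'+1})\bigr)$, whose maximum over $n'\ge1$ is $\tfrac1{q(q-1)}$, attained at $n=n'=1$. When $n,n'\ge2$ the second quotient is $O(q^{-3})$ and the first is comfortably below $\tfrac{4}{3q^2}$, so the sum is at most $\tfrac{9}{5q^2}$ there too. The sole exception is $(n,n',q)=(1,1,2)$, where the sum equals exactly $\tfrac12=\tfrac2{q^2}$; since $\tfrac9{5q^2}<\tfrac2{q^2}$, the bound $\varphi\le\tfrac2{q^2}$ holds in all cases.

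\emph{The main obstacle.} The numerical step is routine in spirit but genuinely fiddly, because the constant $\tfrac95$ is not attained — the supremum of $q^2\varphi$ appears to be $\tfrac32$, reached at $(1,1,3)$ — so the generic estimates must not be too wasteful, particularly when $q\in\{2,3\}$ or $\min(n,n')$ is small, where crude simplifications such as replacing $q^m-(-1)^m$ by $q^m$ lose too much. The cleanest organisation seems to be to bound the two quotients so that their sum is $<\tfrac9{5q^2}$ whenever $q$ and $\min(n,n')$ are not both small, and then to verify the finitely many small cases $q\in\{2,3\}$, $\min(n,n')\in\{1,2\}$ directly against the stated constants, singling out $(1,1,2)$.
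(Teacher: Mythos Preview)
Your derivation of the first inequality is correct and is exactly the paper's argument: Witt transitivity plus the Double Counting Lemma applied once to $\cW=\cN_1(V)$ and once to $\cW=\cP_1(V)$.

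For the second inequality your sketch has a genuine gap. You assert that for $n,n'\ge2$ the $\cN_1$-quotient is ``comfortably below $\tfrac{4}{3q^2}$'' and the $\cP_1$-quotient is ``$O(q^{-3})$'', but neither is proved, and the second needs a \emph{uniform} constant in $n,n'$ to make $\tfrac43+\tfrac{C}{q}\le\tfrac95$ work at $q=2$. Your proposed fallback---verify ``the finitely many small cases $q\in\{2,3\}$, $\min(n,n')\in\{1,2\}$''---is not in fact a finite check, since the other dimension is unbounded; any residual family with one dimension free still requires a uniform estimate.

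The paper handles this by writing $\vartheta_m=1-(-q)^{-m}$ and $\zeta_m=\vartheta_{m-1}\vartheta_m$, so that the two quotients become $c_1/q^2$ and $c_2/q^3$ with
\[
c_1=\frac{\vartheta_n\vartheta_{n'}}{(1+q^{-1})\vartheta_{n+n'}},\qquad
c_2=\frac{\zeta_n\zeta_{n'}}{(1-q^{-2})\zeta_{n+n'}}.
\]
For $n,n'\ge2$ it proves $c_1<1$ by the parity split you allude to, and---this is the missing idea---it establishes the uniform inequality $\zeta_n\zeta_{n'}\zeta_4\le\zeta_2^2\zeta_{n+n'}$ (Lemma~\ref{L:ineq}(c)), which gives $c_2\le\zeta_2^2/\bigl((1-q^{-2})\zeta_4\bigr)\le\tfrac85$ for all $n,n'\ge2$ and all $q\ge2$. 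Hence $c_1+c_2/q<1+\tfrac45=\tfrac95$ with no residual cases. Your treatment of $\min(n,n')=1$ (where $c_2=0$ and the bound reduces to $c_1=\vartheta_{n'}/\vartheta_{n'+1}$, with the single exception $(1,1,2)$ giving $c_1=2$) matches the paper.
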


We need the following technical lemma which estimates
certain polynomials in $-q^{-1}$.

\begin{lemma}\label{L:ineq}
    Suppose $\zeta_n\vcentcolon=\zeta_n(q)=(1-(-q)^{-n+1})(1-(-q)^{-n})$ and $q\ge2$. Then
  \begin{enumerate}[{\rm (a)}]
  \item 
    $0=\zeta_1<\zeta_3<\zeta_5<\cdots<1<\cdots<\zeta_6<\zeta_4<\zeta_2=(1+q^{-1})(1-q^{-2})$,
  \item $\zeta_{2i+2}\zeta_{2j}<\zeta_{2i}\zeta_{2j+2}$ for $1\le i< j$,
  \item $\zeta_{n}\zeta_{n'}\zeta_{4}\le\zeta_{2}^2\zeta_{n+n'}$ for $n,n'\ge1$.
  \end{enumerate}
\end{lemma}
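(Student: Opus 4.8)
The plan is to treat this purely as a chain of elementary estimates for the real numbers $\zeta_n$. Since $(-q)^{-m}$ equals $q^{-m}$ for $m$ even and $-q^{-m}$ for $m$ odd, separating parity gives
\[
\zeta_{2k}=(1+q^{1-2k})(1-q^{-2k}),\qquad \zeta_{2k+1}=(1-q^{-2k})(1+q^{-1-2k}),
\]
so $\zeta_1=0$ and $\zeta_n>0$ for all $n\ge2$.

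For part~(a) I would expand these products. The odd one is $\zeta_{2k+1}=1-q^{-2k}(1-q^{-1})-q^{-4k-1}$, which is $1$ minus two positive quantities each strictly decreasing in $k$; hence $\zeta_{2k+1}$ strictly increases, stays below $1$, and tends to $1$. The even one is $\zeta_{2k}=1+q^{-2k}\bigl((q-1)-q^{1-2k}\bigr)$; the bracket is positive because $q^{1-2k}\le q^{-1}<1\le q-1$, so $\zeta_{2k}>1$, and with $s=q^{-2k}$ the function $(q-1)s-qs^2$ has derivative $(q-1)-2qs\ge0$ on $(0,q^{-2}]$ (the inequality $q^{-2}\le\tfrac{q-1}{2q}$ being equivalent to $q\ge2$), so it increases in $s$; therefore $\zeta_{2k}$ strictly decreases in $k$ with limit $1$. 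Concatenating the two chains, together with $\zeta_2=(1+q^{-1})(1-q^{-2})$, yields (a).

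For part~(b), dividing by $\zeta_{2i}\zeta_{2j}>0$ shows the claim is equivalent to the sequence $g(k):=\zeta_{2k+2}/\zeta_{2k}$ being strictly increasing in $k$, which in turn follows from the log-convexity inequality $\zeta_{2k+2}^2<\zeta_{2k}\zeta_{2k+4}$ for every $k\ge1$. I would prove this last inequality directly; one should avoid differentiating the ratio with respect to a continuous parameter, since it is in fact not monotone there. With $t=q^{-2k}\le q^{-2}$, the identities
\[
(1+qt)(1+tq^{-3})=(1+tq^{-1})^2+tq^{-3}(q^2-1)^2,\qquad (1-t)(1-tq^{-4})=(1-tq^{-2})^2-t(1-q^{-2})^2
\]
let one write $\zeta_{2k}\zeta_{2k+4}=(P+A)(S-B)$ and $\zeta_{2k+2}^2=PS$, where $P=(1+tq^{-1})^2$, $S=(1-tq^{-2})^2$, $A=tq^{-3}(q^2-1)^2$, $B=t(1-q^{-2})^2$, and one checks $A=qB$. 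Hence $\zeta_{2k}\zeta_{2k+4}-\zeta_{2k+2}^2=B\bigl(qS-P-qB\bigr)$, and expanding gives $qS-P-qB=(q-1)-t(q+2q^{-1}+q^{-3})-(q-1)q^{-3}t^2$, which is decreasing in $t>0$ and at $t=q^{-2}$ equals $(q-1)(1-q^{-7})-q^{-1}-2q^{-3}-q^{-5}>0$ for every $q\ge2$; so the difference is positive. This is the one step I expect to require care.

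For part~(c) I would split on the parities of $n,n'$. If $\min(n,n')=1$ the left side vanishes. If $n,n'$ are both even, repeatedly applying (b) to move the smaller index down by $2$ and the larger up by $2$ gives $\zeta_n\zeta_{n'}\le\zeta_2\zeta_{n+n'-2}$, and a further application of (b) gives $\zeta_4\zeta_{n+n'-2}\le\zeta_2\zeta_{n+n'}$; multiplying these yields (c). If $n,n'$ are both odd, hence $\ge3$, then by (a) one has $\zeta_n\zeta_{n'}<1<\zeta_{n+n'}$ and $\zeta_2^2>\zeta_4$, so $\zeta_n\zeta_{n'}\zeta_4<\zeta_4<\zeta_2^2<\zeta_2^2\zeta_{n+n'}$. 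In the mixed case, say $n$ even and $n'\ge3$ odd, (a) gives $\zeta_n\le\zeta_2$, $\zeta_{n'}<1$ and $\zeta_{n+n'}\ge\zeta_5$, so it suffices to check $\zeta_4\le\zeta_2\zeta_5$, which simplifies to $\zeta_2(1+q^{-5})\ge1+q^{-3}$ and holds because already $\zeta_2=1+q^{-1}-q^{-2}-q^{-3}\ge1+q^{-3}$ for $q\ge2$. Apart from the computation inside (b), the argument is routine manipulation using parts (a) and (b).
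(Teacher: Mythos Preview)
Your proof is correct. Parts~(a) and~(c) follow essentially the same parity--case analysis as the paper, though in the mixed-parity subcase of~(c) the paper's argument is a touch more direct: with $n$ even and $n'$ odd, part~(a) already gives $\zeta_n\le\zeta_2$, $\zeta_4<\zeta_2$, and (since $n+n'$ is odd with $n+n'>n'$) $\zeta_{n'}<\zeta_{n+n'}$, and multiplying these three immediately yields~(c) without the auxiliary check $\zeta_4\le\zeta_2\zeta_5$.

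The genuinely different step is part~(b). The paper computes the difference $\zeta_{2i}\zeta_{2j+2}-\zeta_{2i+2}\zeta_{2j}$ directly, factoring it (with computer algebra) as $q^{-4i-4j-3}(q^2-1)(q^{2j}-q^{2i})\,y$ for an explicit polynomial $y$ in~$q$, and then verifies $y>0$ by inspection of the exponents. You instead observe that the full inequality for all $i<j$ is equivalent to strict increase of the ratio $\zeta_{2k+2}/\zeta_{2k}$, hence to the single log-convexity inequality $\zeta_{2k+2}^{2}<\zeta_{2k}\zeta_{2k+4}$, and you establish the latter by the neat identities expressing $\zeta_{2k}\zeta_{2k+4}$ and $\zeta_{2k+2}^2$ in terms of $P,S,A,B$ with $A=qB$. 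Your route is self-contained and avoids the symbolic computation; the paper's route yields a complete factorisation of the two-parameter difference but leans on \textsc{Mathematica} to obtain it. Both are valid; yours is the more transparent by hand.
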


\begin{proof}
  (a)~Suppose that $i\ge1$. By definition, the inequality
  $\zeta_{2i-1}<\zeta_{2i+1}$ is
  \[
    (1-q^{-2i+2})(1+q^{-2i+1})<(1-q^{-2i})(1+q^{-2i-1}).
  \]
  After subtracting $1$  from both sides, multiplying by $q^{4i+1}$,
  and rearranging,  this is equivalent to
  \[
    q^{2i+2}+q^{2i+1}+1<q^{2i+3}+q^{2i}+q^4.
  \]
  The stronger inequality $q^{2i+2}+q^{2i+1}+1<q^{2i+3}$ is true for
  $q\ge2$. Therefore $\zeta_{2i-1}<\zeta_{2i+1}$ holds, and
  since $\lim_{i\to\infty}\zeta_{2i+1}=1$ we see that
  $0=\zeta_1<\zeta_3<\zeta_5<\cdots<1$ holds.

  Similarly, by definition, the inequality $\zeta_{2i+2}<\zeta_{2i}$ is
  \[
    (1+q^{-2i-1})(1-q^{-2i-2})<(1+q^{-2i+1})(1-q^{-2i}),
  \]
  and after subtracting $1$ from both sides, multiplying by $q^{4i+3}$,
  and rearranging, this is equivalent to
  \[
    q^{2i+3}+q^{2i+2}+q^4<q^{2i+4}+q^{2i+1}+1.
  \]
  Since, $i\ge1$ and $q\ge2$, we have
  \[
    q^{2i+3}+q^{2i+2}+q^4\le q^{2i}(q^3+q^2+q^2)\le q^{2i+4}
    <q^{2i+4}+q^{2i+1}+1.
  \]
  Hence $\zeta_{2i+2}<\zeta_{2i}$ holds, and since
	$\lim_{i\to\infty}\zeta_{2i}=1$ we see that
  $1<\cdots<\zeta_6<\zeta_4<\zeta_2=(1+q^{-1})(1-q^{-2})$ holds, proving part (a).

  (b)~Observe first that
  $\zeta_{2i}=q^{-4i+1}(q^{2i-1}+1)(q^{2i}-1)$. Replacing $i$ with $i+1$ gives
  $\zeta_{2i+2}=q^{-4i-3}(q^{2i+1}+1)(q^{2i+2}-1)$. 
  Hence $\zeta_{2i}\zeta_{2j+2}-\zeta_{2i+2}\zeta_{2j}$ equals
  $q^{-4i-4j-2}x$ where $x$ is the following difference
  \[
  (q^{2i-1}{+}1)(q^{2i}{-}1)(q^{2j+1}{+}1)(q^{2j+2}{-}1)
                   -(q^{2i+1}{+}1)(q^{2i+2}{-}1)(q^{2j-1}{+}1)(q^{2j}{-}1).
  \]
  {\sc Mathematica}~\cite{Wolfram} shows that
  $x=q^{-1}(q^2-1)(q^{2j}-q^{2i})y$ where
  \[
    y = q^{2i+2j+2}-q^{2i+2j+1}-q^{2j+2}-q^{2j}-q^{2i+2}-q^{2i}-q+1.
  \]
  Suppose that $1\le i<j$. Proving
  $\zeta_{2i}\zeta_{2j+2}-\zeta_{2i+2}\zeta_{2j}>0$ is equivalent to proving $x>0$,
  which is equivalent to proving $y>0$. If the exponents of $q$ are
  distinct, that is if $i+1<j$, then this is clear. If $i+1=j$, then
  $y = q^{2i+2j+2}-q^{2i+2j+1}-q^{2j+2}-2q^{2j}-q^{2i}-q+1$ and $2q^{2j}\le q^{2j+1}$
  and it is also clear that $y>0$.
  Consequently, $\zeta_{2i+2}\zeta_{2j} < \zeta_{2i}\zeta_{2j+2}$ as claimed.

  (c)~If $n$ is even and $n'$ is odd, then part~(a) shows that
  $\zeta_{n}\le\zeta_{2}$, $\zeta_{n'}<\zeta_{n+n'}$, $\zeta_{4}<\zeta_{2}$ and
  hence $\zeta_{n}\zeta_{n'}\zeta_{4}\le\zeta_{2}^2\zeta_{n+n'}$.
  The case when $n$ is odd and $n'$ is even is proved similarly.
  If both $n$ and $n'$ are odd, then
  $\zeta_{n'}<\zeta_{2}$, $\zeta_{n}<\zeta_{n+n'}$ and $\zeta_{4}<\zeta_{2}$
  by part~(a),
  and hence $\zeta_{n}\zeta_{n'}\zeta_{4}\le\zeta_{2}^2\zeta_{n+n'}$.
  Finally, suppose that both $n$ and $n'$ are even.
  By symmetry,  we may assume that $n\le n'$. The inequality holds trivially 
  if $n=n'=2$, so we may assume in addition that $n'\ge 4$. 
  Then $\zeta_{4}\zeta_{n'}\le\zeta_{2}\zeta_{n'+2}$ by part~(b)  
  (with $i=1$ and $j=n'/2$).
  Hence $\zeta_{n}\zeta_{n'}\zeta_{4}\le\zeta_{2}^2\zeta_{n+n'}$ holds when $n=2$.
  Henceforth assume that $4\le n\le n'$.
  Then  $\zeta_{n}\zeta_{n'+2}<\zeta_{n-2}\zeta_{n'+4}<\cdots<\zeta_{2}\zeta_{n+n'}$
  by repeated application of part~(b). Multiplying
  $\zeta_{n}\zeta_{n'+2}\le\zeta_{2}\zeta_{n+n'}$ and
  $\zeta_{4}\zeta_{n'}\le\zeta_{2}\zeta_{n'+2}$ and cancelling $\zeta_{n'+2}$ gives
  $\zeta_{n}\zeta_{4}\zeta_{n'}  \le\zeta_{2}^2\zeta_{n+n'}$ as claimed.
 \end{proof}

\begin{proof}[Proof of Theorem~$\ref{T:U}$]
  Let $V=(\F_{q^2})^{n+n'}$, $\cU=\Binom{V}{n}$, $\cU'=\Binom{V}{n'}$ and
  fix $U\in\cU$.
  The isometry group $G=G(V)$ has two orbits on the 1-subspaces of $V$,
  namely the set of isotropic 1-subspaces $\cP_1(V)$ and the set of non-degenerate 1-subspaces
  $\cN_1(V)$. Similarly, $G_U$ has two orbits on the 1-subspaces of $U$,
  namely $\cP_1(U)$ and $\cN_1(U)$. Fix $W_\cN\in\cN_1(U)$ and
  $W_\cP\in\cP_1(U)$. Then
  \begin{align*}
    \left|\cup_{W\in\cN_1(U)\cup\cP_1(U)}\,\cU'(W)\right|
    &\le\left|\cup_{W\in\cN_1(U)} \cU'(W)\right|
       +\left|\cup_{W\in\cP_1(U)} \cU'(W)\right|\\
    &\le|\cN_1(U)|\cdot|\cU'(W_\cN)|+|\cP_1(U)|\cdot|\cU'(W_\cP)|.
  \end{align*}
  We apply the double counting lemma (Lemma~\ref{L:DC}) with $G=G(V)$
  and a fixed $U'\in\cU'=\Binom{V}{n'}$,
  first with  $\cW=\cN_1(V)$ to obtain
  $|\cN_1(V)|\cdot|\cU'(W_\cN)|=|\cN_1(U')|\cdot|\cU'|$, and then
  with $\cW=\cP_1(V)$ to obtain
  $|\cP_1(V)|\cdot|\cU'(W_\cP)|=|\cP_1(U')|\cdot|\cU'|$. Hence, substituting
  into the above display yields
  \[
    \varphi= \frac{\left|\bigcup_{W\in\cN_1(U)\cup\cP_1(U)}\cU'(W)\right|}{|\cU'|}
    \le\frac{|\cN_1(U)||\cN_1(U')|}{|\cN_1(V)|}
      +\frac{|\cP_1(U)||\cP_1(U')|}{|\cP_1(V)|}.
  \]
  A formula for $|\cP_1(U)|$ is given in~\cite{Taylor}*{10.4, p.\,117}
  and since $|\cN_1(U)|+|\cP_1(U)|$ equals the number $(q^{2n}-1)/(q^2-1)$
  of 1-subspaces of $V$, we have
  \begin{align*}
    |\cN_1(U)|&=\frac{q^{2n-2}(1-(-q)^{-n})}{1+q^{-1}} \quad\textup{and}\\
    |\cP_1(U)|&=\frac{q^{2n-3}(1-(-q)^{-n+1})(1-(-q)^{-n})}{1-q^{-2}}.
  \end{align*}
  Using the convenient notation $\vartheta_n=1-(-q)^{-n}$ and
  $\zeta_n=\vartheta_{n-1}\vartheta_n$, these expressions become 
  \begin{equation}\label{E:N1P1}
  |\cN_1(U)|=\frac{q^{2n-2}\vartheta_n}{1+q^{-1}}, \quad\textup{and}\quad
  |\cP_1(U)|=\frac{q^{2n-3}\zeta_n}{1-q^{-2}}.
  \end{equation}
  Thus
  \begin{align}\label{E:Frog}
    \frac{\left|\cup_{W\in\cN_1(U)} \cU'(W)\right|}{|\cU'|}
    &\le\frac{|\cN_1(U)|\cdot|\cN_1(U')|}{|\cN_1(V)|}=\vcentcolon\frac{c_1}{q^2}
    \quad\textup{where by~\eqref{E:N1P1},}\\
    c_1&=\frac{\vartheta_n\vartheta_{n'}}
        {(1+q^{-1})\vartheta_{n+n'}},\notag
  \end{align}
  and
  \begin{align}\label{E:Toad}
  \frac{\left|\cup_{W\in\cP_1(U)} \cU'(W)\right|}{|\cU'|}
    &\le\frac{|\cP_1(U)|\cdot|\cP_1(U')|}{|\cP_1(V)|}=\vcentcolon\frac{c_2}{q^3}
    \quad\textup{where by~\eqref{E:N1P1},}\\
    c_2&=\frac{\zeta_n\zeta_{n'}}{(1-q^{-2})\zeta_{n+n'}}.\notag
  \end{align}
  Thus $\varphi\le (c_1+c_2/q)q^{-2}$. We now bound $c_1+c_2/q$.
  
  First suppose that $n=1$. Then $\vartheta_1=1+q^{-1}$ and $\zeta_1=0$
  implies that $c_1=\vartheta_{n'}/\vartheta_{n'+1}$ by~\eqref{E:Frog} and $c_2=0$
  by~\eqref{E:Toad}. Therefore, when $n=1$, 
  \[
    c_1+\frac{c_2}{q}=c_1=\frac{\vartheta_{n'}}{\vartheta_{n'+1}}.
  \]
  If $n'$ is even then $\theta_{n'}<1<\theta_{n'+1}$ so $c_1<1<\frac{9}{5}$.
  On the other hand if $n'$ is odd, then
  $c_1=\frac{1+q^{-n'}}{1-q^{-n'-1}}\le\frac{1+q^{-1}}{1-q^{-2}}
  =\frac{1}{1-q^{-1}}<\frac{9}{5}$ if $q\ge3$.
  If $(n,n',q)=(1,1,2)$, we have $c_1+\frac{c_2}{q}=\frac{1+2^{-1}}{1-2^{-2}}=2$.
  Hence, for  $n=1$, $c_1+\frac{c_2}{q}\le \frac{9}{5}$ when
   $(n,n',q)\ne(1,1,2)$, and $c_1+\frac{c_2}{q}$ is always at most $2$, so
  Theorem~\ref{T:U} is proved in this case.
  As the expressions for $c_1$ and $c_2$ are symmetric in $n,n'$, 
  it follows that Theorem~\ref{T:U} is also proved when $n'=1$.

  Thus we may  (and will) assume henceforth that $n,n'\ge2$.
  We first prove that $c_1 <1$ using the expression in \eqref{E:Frog}.
  Since $n,n'\ge2$, we have
  \[
  \frac34\le 1-q^{-2}=\vartheta_2<\vartheta_4<\vartheta_6<\cdots<1<\cdots<\vartheta_5<\vartheta_3=1+q^{-3}\le \frac98,
  \]
  and hence $1-q^{-2}\le\vartheta_{n},\vartheta_{n'}\le 1+q^{-3}$.
  If $n,n'$ have different parities, then
  \[
  c_1<\frac{1\cdot\vartheta_3}{(1+q^{-1})\cdot 1}=\frac{1+q^{-3}}{1+q^{-1}} = 1-q^{-1} + q^{-2} < 1.
  \]
  If $n,n'$ are both even, then
  \[
  c_1<\frac{1^2}{(1+q^{-1})\cdot \vartheta_4}
    =\frac{1}{(1+q^{-1}) (1-q^{-4})} < 1.
  \]
  If $n,n'$ are both odd, then both $n,n'\ge3$, and so
  \[
  c_1\le \frac{\vartheta_3^2}{(1+q^{-1})\cdot \vartheta_6} 
  =\frac{(1+q^{-3})^2}{(1+q^{-1})\cdot(1-q^{-6})}
  =\frac{1-q^{-1}+q^{-2}}{1-q^{-3}}<1.
  \]
  In summary, for $n,n'\ge2$, we have
  $c_1<1$ as claimed.
  
  We now show that $c_2\le\frac{8}{5}$ when $n,n'\ge2$.
  By~\eqref{E:Toad} and Lemma~\ref{L:ineq}(c),
  \begin{align*}
  c_2 &= \frac{\zeta_n\zeta_{n'}}{(1-q^{-2})\zeta_{n+n'}}
  \le\frac{\zeta_2^2}{(1-q^{-2})\zeta_{4}}
  =\frac{(1+q^{-1})\zeta_2}{\zeta_4}\\
  &=\frac{(1+q^{-1})^2(1-q^{-2})}{(1+q^{-3})(1-q^{-4})}
  =\frac{1+q^{-1}}{(1-q^{-1}+q^{-2})(1+q^{-2})}.
  \end{align*}
  The last expression on the right above is a decreasing function of $q$. Setting $q=2$ shows
  $c_2\le \frac85$. Thus $c_1+c_2/q<1+4/5=9/5$ when $n,n'\ge2$.

  Thus we have proved, for all $n,n'\ge1$, $q\ge2$ and $(n,n',q)\ne(1,1,2)$, that
  \[
    \varphi \le\frac{|\cN_1(U)|\cdot|\cN_1(U')|}{|\cN_1(V)|}
      +\frac{|\cP_1(U)|\cdot|\cP_1(U')|}{|\cP_1(V)|}
      =\frac{c_1}{q^2}+\frac{c_2}{q^3}=\frac{c_1+c_2/q}{q^2}
      \le\frac{9}{5q^2},
  \]
  and when  $(n,n',q)=(1,1,2)$, then $\varphi \le  \frac{2}{q^2}$.
  This completes the proof.
\end{proof}

\section{The symplectic case}\label{S:Sp}

Let $V=(\F_q)^{n+n'}$ be a non-degenerate symplectic space where $n,n'$ are even, and
let $\cU=\Binom{V}{n}$ and $\cU'=\Binom{V}{n'}$, as in Section~\ref{S:proof1}.
In this section we prove Theorem~\ref{T:main1} in the symplectic case by establishing the following result.

\begin{theorem}\label{T:Sp}
 Suppose that $n,n'$ are even and $V=(\F_q)^{n+n'}$ is a non-degenerate symplectic
  space. Let $\cU=\Binom{V}{n}$ and $\cU'=\Binom{V}{n'}$, and fix $U\in \cU$. Then
  \[
  \varphi\vcentcolon= \frac{|\{U'\in \cU'\colon U\cap U'\ne0\}|}{|\cU'|}<\frac{5}{3q}
  \qquad\textup{for all $n,n',q$.}
  \]
 Furthermore if either $q\ge3$ or 
 $\min\{n,n'\}=2$, then $\varphi< \frac{3}{2q}$. 
\end{theorem}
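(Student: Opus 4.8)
The plan is to reduce Theorem~\ref{T:Sp} to estimating the explicit ratio in~\eqref{E:rhoSp}. Since $V$ is symplectic, every $1$-subspace is totally isotropic, so $\cN_1(U)$ is empty and the second term of~\eqref{E:rho2} vanishes; thus it suffices to bound $\varphi\le|\bigcup_{W\in\cP_1(U)}\cU'(W)|/|\cU'|$. The isometry group $G=\Sp(V)$ is transitive on the $1$-subspaces of $V$ (all isotropic), so I would apply the Double Counting Lemma (Lemma~\ref{L:DC}) with $\cW=\cP_1(V)$ and a fixed $U'\in\cU'$, exactly as in the unitary case: for fixed $W\in\cP_1(V)$ we get $|\cP_1(V)|\cdot|\cU'(W)|=|\cP_1(U')|\cdot|\cU'|$, and then the union bound $|\bigcup_{W\in\cP_1(U)}\cU'(W)|\le|\cP_1(U)|\cdot|\cU'(W)|$ yields
\[
\varphi\le\frac{|\cP_1(U)|\cdot|\cP_1(U')|}{|\cP_1(V)|}.
\]
This is the content of Lemma~\ref{T:IE} referenced in the excerpt, so I would first establish (or cite) that lemma, and the remaining work is purely to estimate the right-hand side.

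Next I would substitute the standard count of isotropic $1$-subspaces of a non-degenerate symplectic $2m$-space, namely $|\cP_1(\F_q^{2m})|=(q^{2m}-1)/(q-1)$ (every $1$-subspace is isotropic). Writing $n=2a$, $n'=2b$, $n+n'=2(a+b)$, the bound becomes
\[
\varphi\le\frac{(q^{2a}-1)(q^{2b}-1)}{(q-1)(q^{2a+2b}-1)}.
\]
Now I would factor out the dominant power of $q$: the right-hand side equals $\frac{1}{q}\cdot\frac{(1-q^{-2a})(1-q^{-2b})}{(1-q^{-1})(1-q^{-2a-2b})}$. Since $a,b\ge1$, the numerator is less than $1$ and the factor $(1-q^{-2a-2b})^{-1}$ is at most $(1-q^{-4})^{-1}$, so
\[
\varphi<\frac{1}{q}\cdot\frac{1}{(1-q^{-1})(1-q^{-4})}=\frac{q^4}{(q-1)(q^4-1)}\cdot\frac1q\cdot q\;=\;\frac{q^{3}}{(q-1)(q^{4}-1)}\cdot q.
\]
More cleanly: $\varphi<\frac{1}{q(1-q^{-1})(1-q^{-4})}$, which is a decreasing function of $q$. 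Evaluating at $q=2$ gives $\varphi<\frac{1}{2\cdot\frac12\cdot\frac{15}{16}}=\frac{16}{15}\cdot\frac1q$; that is too weak, so for the universal bound $\varphi<\tfrac{5}{3q}$ I should instead keep the crude estimate $\varphi\le\frac{(q^{2a}-1)(q^{2b}-1)}{(q-1)(q^{2a+2b}-1)}<\frac{1}{q-1}$ and check $\frac{1}{q-1}<\frac{5}{3q}$ for $q\ge3$ while handling $q=2$ separately; for $q\ge3$ one has $3q<5(q-1)\iff 2q>5$, true. So the universal bound $\varphi<\tfrac53 q^{-1}$ for $q\ge3$ is immediate from $\varphi<(q-1)^{-1}$, and the sharper $\varphi<\tfrac32 q^{-1}$ for $q\ge3$ follows from the decreasing function argument above since at $q=3$ one gets $\varphi<\frac{1}{3(1-1/3)(1-1/81)}=\frac{1}{2}\cdot\frac{81}{80}=\frac{81}{160}<\frac12=\frac32\cdot\frac13$.

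For the case $\min\{n,n'\}=2$, say $n=2$ (so $a=1$): then $(q^{2a}-1)=q^2-1$ and the bound is $\varphi\le\frac{(q^2-1)(q^{2b}-1)}{(q-1)(q^{2b+2}-1)}=\frac{(q+1)(q^{2b}-1)}{q^{2b+2}-1}<\frac{q+1}{q^2}=\frac1q+\frac1{q^2}\le\frac{3}{2q}$ for $q\ge2$, which gives the sharper bound in that case and also $<\tfrac53 q^{-1}$. The only genuinely remaining case for the universal bound is $q=2$ with both $n,n'\ge4$, which the excerpt says is handled in the remainder of Section~\ref{S:Sp}; I would defer to that argument, noting that for $q=2$ and $a,b\ge2$ the ratio $\frac{(4^a-1)(4^b-1)}{(2^{2a+2b}-1)}$ is at most $\frac{(4^2-1)\cdot 4^b}{4^2\cdot 4^b}\cdot\frac{\text{small}}{1}<\frac{15}{16}$, giving $\varphi<\frac{15}{16}<\frac{5}{3\cdot2}=\frac56$, so the bound actually does hold. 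The main obstacle is thus not the inequality manipulation — which is elementary once the double-counting reduction is in place — but rather being careful with the $q=2$, $\min\{n,n'\}\ge4$ corner: there the convenient estimate $(q-1)^{-1}=1$ just fails to be $<\tfrac53 q^{-1}=\tfrac56$, so one must use the extra saving from $(1-q^{-2a-2b})^{-1}$ being bounded by $(1-2^{-8})^{-1}$ or better, i.e. the genuinely two-dimensional nature of the estimate, rather than collapsing everything to $(q-1)^{-1}$.
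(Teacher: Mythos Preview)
Your reduction via Lemma~\ref{L:DC} to $\varphi\le|\cP_1(U)|\,|\cP_1(U')|/|\cP_1(V)|$ is exactly what the paper does, and your handling of $q\ge3$ and of $\min\{n,n'\}=2$ is fine. (One arithmetic slip: at $q=3$ your bound is $\tfrac{81}{160}$, which is slightly \emph{greater} than $\tfrac12$, not less. This is harmless because the cruder chain $\varphi<\tfrac{1-q^{-\min\{n,n'\}}}{q-1}<\tfrac{1}{q-1}\le\tfrac{3}{2q}$ for $q\ge3$ already gives the strict inequality you need, and this is precisely what the paper uses.)

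The genuine gap is the case $q=2$ with $\min\{n,n'\}\ge4$. Your claim that the first-order union bound still suffices there is false. With $q=2$ the bound reads
\[
\varphi\le\frac{(2^{n}-1)(2^{n'}-1)}{2^{n+n'}-1}
=1-\frac{2^{n}+2^{n'}-2}{2^{n+n'}-1},
\]
which tends to $1$ as $n,n'\to\infty$; already at $n=n'=4$ it equals $\tfrac{225}{255}=\tfrac{15}{17}\approx0.882>\tfrac56=\tfrac{5}{3q}$. The factor $(1-q^{-n-n'})^{-1}$ you appeal to is at least $1$ and gives no saving whatsoever; there is no ``two-dimensional'' improvement hiding in the union bound. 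This is exactly why the paper needs a qualitatively different argument here: it truncates the inclusion--exclusion expansion for $\cT=\bigcup_{W\in\cP_1(U)}\cU'(W)$ at the third level to obtain $|\cT|\le T_1-T_2+T_3$, then evaluates $T_2,T_3$ by double counting over the $\Sp(V)$-orbits on $2$- and $3$-subspaces of $V$ (both totally isotropic and not). After the dust settles this yields $\varphi<\tfrac56\cdot T_1/|\cU'|<\tfrac56=\tfrac{5}{3q}$. That second-order correction is the whole point of Subsection~\ref{s:sp2} and cannot be bypassed.
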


As every 1-subspace of $U$ is totally isotropic, $\cN_1(U)$~is~empty.
Lemma~\ref{T:IE} below proves Equation~\eqref{E:rhoSp}, and so justifies
the last sentence of Theorem~\ref{T:Sp}. In particular it proves
Theorem~\ref{T:Sp} for $q\ge 3$.

\begin{lemma}\label{T:IE}
  Let $V, n, n', \cU, \cU'$ and $\varphi$ be as in Theorem~$\ref{T:Sp}$. Then
  \[
  \varphi\le\frac{|\cP_1(U)|\cdot|\cP_1(U')|}{|\cP_1(V)|}
  <\frac{1-q^{-\min\{n,n'\}}}{q-1},
  \]
  for all $U\in\cU$ and $U'\in\cU'$.
  Moreover, if $q\ge3$, or if $\min\{n,n'\}=2$, then $\varphi<\frac{3}{2q}< \frac{5}{3q}$.
\end{lemma}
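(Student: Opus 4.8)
The plan is to bound $\varphi$ by a single term using the inclusion--exclusion/union bound together with the double counting lemma, and then to estimate the resulting ratio of counts $|\cP_1(U)|\cdot|\cP_1(U')|/|\cP_1(V)|$ explicitly in terms of $q$ and $\min\{n,n'\}$. First I would recall from Section~\ref{S:proof1} that, since $\cN_1(U)=\emptyset$ in the symplectic case, we have $\{U'\in\cU'\colon U\cap U'\ne0\}=\bigcup_{W\in\cP_1(U)}\cU'(W)$, so $\varphi\le\frac{1}{|\cU'|}\sum_{W\in\cP_1(U)}|\cU'(W)|$. The isometry group $G=G(V)=\Sp(V)$ is transitive on $\cP_1(V)$ (all $1$-subspaces are isotropic), so for any fixed $W\in\cP_1(U)$ the quantity $|\cU'(W)|$ is independent of $W$; applying Lemma~\ref{L:DC} with $\cW=\cP_1(V)$ and a fixed $U'\in\cU'$ gives $|\cP_1(V)|\cdot|\cU'(W)|=|\cP_1(U')|\cdot|\cU'|$, hence $|\cU'(W)|=|\cP_1(U')|\,|\cU'|/|\cP_1(V)|$. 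Summing over the $|\cP_1(U)|$ choices of $W$ yields the first displayed inequality $\varphi\le |\cP_1(U)|\,|\cP_1(U')|/|\cP_1(V)|$.

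Next I would substitute the explicit counts. For a non-degenerate symplectic space of dimension $2m$ over $\F_q$ the number of isotropic (equivalently, all) $1$-subspaces is $|\cP_1|=(q^{2m}-1)/(q-1)$. With $n=2a$, $n'=2b$ and $n+n'=2(a+b)$ this gives
\[
\frac{|\cP_1(U)|\,|\cP_1(U')|}{|\cP_1(V)|}
=\frac{(q^{n}-1)(q^{n'}-1)}{(q-1)(q^{\,n+n'}-1)}.
\]
Writing $x=q^{-n}$, $y=q^{-n'}$ with $x,y\le q^{-2}$, the right-hand side equals $\frac{1}{q-1}\cdot\frac{(1-x)(1-y)}{1-xy}$. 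Since $(1-x)(1-y)=1-x-y+xy\le 1-\max\{x,y\}$ and $1-xy\ge 1-\max\{x,y\}\cdot\min\{x,y\}\ge 1$ when convenient, a short manipulation gives $\frac{(1-x)(1-y)}{1-xy}\le 1-\min\{x,y\}=1-q^{-\min\{n,n'\}}$: indeed $(1-x)(1-y)\le(1-xy)(1-\min\{x,y\})$ is equivalent, after expanding and cancelling, to $0\le \max\{x,y\}-\min\{x,y\}-xy\cdot\min\{x,y\}+xy$, i.e.\ $0\le(\max-\min)+xy(1-\min)$, which is clear. This yields the second displayed inequality $\varphi<\frac{1-q^{-\min\{n,n'\}}}{q-1}$.

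Finally I would derive the numerical consequences. If $q\ge3$ then $\frac{1-q^{-\min\{n,n'\}}}{q-1}<\frac{1}{q-1}\le\frac{1}{q}\cdot\frac{q}{q-1}\le\frac{3}{2q}$ since $q/(q-1)\le 3/2$ for $q\ge3$; and if $\min\{n,n'\}=2$ then $\frac{1-q^{-2}}{q-1}=\frac{(1-q^{-1})(1+q^{-1})}{q-1}=\frac{1+q^{-1}}{q}\le\frac{3}{2q}$ for $q\ge2$. In either case $\varphi<\frac{3}{2q}<\frac{5}{3q}$, as claimed. The only mildly delicate point is the elementary inequality $\frac{(1-x)(1-y)}{1-xy}\le 1-\min\{x,y\}$, which is where the bound $x,y\le q^{-2}$ (equivalently $n,n'\ge2$) is used to keep everything in range; the rest is bookkeeping with the standard subspace-counting formulas and the two applications of Lemma~\ref{L:DC}. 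I expect the main obstacle to be presenting the chain of elementary estimates cleanly rather than any conceptual difficulty.
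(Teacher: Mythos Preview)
Your approach is essentially the paper's: union bound over $\cP_1(U)$, double counting via Lemma~\ref{L:DC}, then the explicit count $\frac{(q^n-1)(q^{n'}-1)}{(q-1)(q^{n+n'}-1)}$ and elementary estimates. However, the elementary estimate is muddled. With $x=q^{-n}$ and $y=q^{-n'}$ one has $q^{-\min\{n,n'\}}=\max\{x,y\}$, not $\min\{x,y\}$; so the inequality you actually need is
\[
\frac{(1-x)(1-y)}{1-xy}<1-\max\{x,y\},
\]
and this is immediate: assuming without loss of generality $x\ge y$, divide by $1-x>0$ to reduce it to $1-y<1-xy$, i.e.\ $x<1$. (This is exactly what the paper does, though it phrases it directly in terms of $q^{-n},q^{-n'}$.) Your displayed expansion ``$0\le(\max-\min)+xy(1-\min)$'' is not the correct difference; a direct computation gives $(1-xy)(1-\max\{x,y\})-(1-x)(1-y)=\min\{x,y\}\,(1-\max\{x,y\})^2>0$. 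Finally, the remark that the bound $x,y\le q^{-2}$ is ``where $n,n'\ge2$ is used'' is misleading: the inequality above holds for all $0<x,y<1$, and the hypothesis $n,n'\ge2$ plays no role in this step (it comes for free since non-degenerate symplectic subspaces have even dimension). With these corrections your argument is complete and matches the paper's.
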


\begin{proof}
  As all the 1-subspaces of $V$ are isotropic,
  $|\cP_1(V)|=(q^{n+n'}-1)/(q-1)$ and 
  $\varphi =  |\bigcup_{W'\in\cP_1(U)}\cU'(W')|/|\cU'|$.
  Since the restriction of $G_U$ to $U$ is $G(U)$, and $G(U)$ is
  transitive on $\cP_1(U)$, we have, for a chosen $W\in\cP_1(U)$,
  \[
  \varphi\cdot |\cU'| = \left|\bigcup_{W'\in\cP_1(U)}\cU'(W')\right|
  \le|\cP_1(U)|\cdot|\cU'(W)|.
  \] 
  Applying Lemma~\ref{L:DC} with $G$ the isometry group $\Sp(V)$ of $V$,
  $\cU'=\Binom{V}{n'}$, $\cW=\cP_1(V)$ and fixed $U'\in\cU'$, $W\in\cW$,
  we obtain   $|\cP_1(V)|\cdot |\cU'(W)|= |\cP_1(U')|\cdot |\cU'|$. Thus
  \begin{align*}
  \varphi&\le\frac{|\cP_1(U)|\cdot|\cU'(W)|}{|\cU'|}
      =\frac{|\cP_1(U)|\cdot|\cP_1(U')|}{|\cP_1(V)|}
      =\frac{(q^n-1)(q^{n'}-1)}{(q-1)(q^{n+n'}-1)}\\
    &=\frac{(1-q^{-n})(1-q^{-n'})}{(q-1)(1-q^{-n-n'})}
    <\frac{1-q^{-\min\{n,n'\}}}{q-1}.
  \end{align*}
  If $q\ge3$, then $\varphi<\frac{1}{q-1}\le\frac{3}{2q}<\frac{5}{3q}$.
  Finally, if $\min\{n,n'\}=2$, then
  \[
  \varphi<\frac{1-q^{-2}}{q-1}
  =\frac{1-q^{-2}}{q(1-q^{-1})}=\frac{1+q^{-1}}{q}\le\frac{3}{2q}<\frac{5}{3q}.\qedhere
  \]
\end{proof}

\subsection{The symplectic case  with \texorpdfstring{$q=2$}{}}\label{s:sp2}
In order  to prove Theorem~\ref{T:Sp} for $q=2$ we require a much more 
delicate argument. Henceforth assume that the hypotheses of
Theorem~\ref{T:Sp} hold, that $q=2$ and that $\min\{n,n'\}\ge4$.
For a subspace $W$ of $V$ recall that
$\cU'(W)=\{U'\in\cU'\mid W\le U'\}$ as in Lemma~\ref{L:DC},
and for a fixed $U\in\cU$ write
%
%
%
\begin{equation}\label{e:R}
\cT = \bigcup_{W\in\cP_1(U)}\cU'(W).
\end{equation}
We need to improve the
upper bound for $\varphi=|\cT|/|\cU'|$ given in Lemma~\ref{T:IE}.
For a positive integer $k$, let $\cP_k(U)$ denote the set of
totally isotropic $k$-subspaces of $U$. (Clearly
$|\cP_k(U)|=0$ for $k>\frac12\dim(U)$.) It follows
from~\cite{Taylor}*{Exercise 8.1(ii)} that
\begin{equation}\label{E:Pkq}
  |\cP_k(U)|=\prod_{i=0}^{k-1}\frac{q^{\dim(U)-2i}-1}{q^{k-i}-1}.
\end{equation}
To estimate $|\cT|$ we use the facts that the
symplectic group $G(U)$ on $U$ (which is induced by $G(V)_U$) has two
orbits on the set of $2$-subspaces of $U$, namely $\cP_2(U)$ and $\Binom{U}{2}$,
and that $G(U)$ is transitive on the set of  $3$-subspaces of $U$ which are not totally isotropic (the latter assertion is justified after \eqref{E:Part}).
 
  The goal of the next result (Lemma~\ref{L:Sp2}) is to establish a
  new upper bound
  $|\cT|\le T_1-T_2+T_3$, with the $T_k$ as in \eqref{E:Ti}, using
  the inclusion-exclusion
  principle. We compute $T_1$ directly, and compute $T_2$ and $T_3$ in terms
  of the quantities $S_2$ and $S_3$ defined in \eqref{E:Si}. Then we
  determine~$S_2$ and $S_3$ using the double counting Lemma~\ref{L:DC}.
  Before proceeding, we need some additional notation.

  Let $\binom{V}{k}$ denote the set of all $k$-subspaces of $V$. Consider
  the partition
  \begin{equation}\label{E:Part}
    \binom{V}{k}=\cP_k(V)\,\dot{\cup}\,\cR_k(V)
    \qquad\textup{where $\cR_k(V)=\binom{V}{k}\setminus\cP_k(V)$.}
  \end{equation}
  Note in particular that $\cR_2(V)=\Binom{V}{2}$. Each $W\in\cR_3(V)$
  is 3-dimensional and hence is  degenerate. Since $W\not\le W^\perp$
  and $W/(W\cap W^\perp)$ is non-degenerate, we have
  $W\cap W^\perp=\langle f'\rangle$ and $W$ is isometric
  to $\langle e,f,f'\rangle$ where
  $\langle e,f\rangle\in\Binom{V}{2}$.
  Hence $\cR_3(V)$ is an orbit under $G\vcentcolon=\Sp(V)$. Also each of
  $\cR_2(V)$, $\cP_2(V)$, and $\cP_3(V)$ is a $G$-orbit.
  Applying~\eqref{E:Part} to $\binom{U}{k}$ gives, for  $k\in\{2,3\}$,
  \begin{align}\label{E:Si}
  S_k &\vcentcolon=\sum_{W\in\binom{U}{k}}|\cU'(W)| =  S_{\cP,k} + S_{\cR,k} \quad \mbox{where}\\
   S_{\cP,k} &\vcentcolon=\sum_{W\in\cP_k(U)}|\cU'(W)|, \quad \mbox{and}\quad  
   S_{\cR,k} \vcentcolon= \sum_{W\in\cR_k(U)}|\cU'(W)|.\notag
  \end{align}

\begin{lemma}\label{L:Sp2}
   Assume that $\min\{n,n'\}\ge4$ and $q=2$. Given a positive integer $k$, let
  \begin{equation}\label{E:Ti}
    T_k=\sum_{ \{W_1,\dots,W_k\} } |\cU'(W_1+\cdots+W_k)|,
  \end{equation}
  where the sum is over all $k$-element subsets $\{W_1,\dots,W_k\}$
  of $\cP_1(U)$.
\begin{enumerate}[{\rm (a)}]
\item  Then $|\cT|\le T_1-T_2+T_3$ and, for $W\in\cP_1(U)$, 
  \[
    T_1=|\cP_1(U)|\cdot|\cU'(W)|\quad\textup{and satisfies}\quad
    \frac{T_1}{|\cU'|}=\frac{(1-2^{-n})(1-2^{-n'})}{1-2^{-n-n'}}<1.
  \]
\item Let $k\in\{2,3\}$. Then $\cR_k\ne\emptyset$ for all $n\ge4$,  and
  $\cP_k\ne\emptyset$ if and only if $(n,k)\ne(4,3)$. Further,
  $S_k= S_{\cP,k}+S_{\cR,k}$ as in~\eqref{E:Si} where $S_{\cP,k}=0$ if $(n,k)=(4,3)$,
  and otherwise
\[
  S_{\cP,k}= |\cP_k(U)|\cdot|\cU'(W_{\cP,k})|,\qquad
  S_{\cR,k}=\left|\cR_k(U)\right|\cdot |\cU'(W_{\cR,k})|,
\]
for chosen $W_{\cP,k}\in \cP_k(U)$ and $W_{\cR,k}\in \cR_k(U)$.
Moreover, 
\[
|\cT|\le  T_1-2S_2+28S_3.
\]
\end{enumerate}
\end{lemma}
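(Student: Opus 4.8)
The plan is to prove the two parts of Lemma~\ref{L:Sp2} in order, relying on the inclusion-exclusion principle and the double counting lemma (Lemma~\ref{L:DC}).

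\textbf{Part (a).} First I would recall the Bonferroni inequalities: for any finite collection of sets $\{A_W\}_{W\in\cP_1(U)}$, one has $\left|\bigcup_W A_W\right|\le \sum_W|A_W| - \sum_{\{W_1,W_2\}}|A_{W_1}\cap A_{W_2}| + \sum_{\{W_1,W_2,W_3\}}|A_{W_1}\cap A_{W_2}\cap A_{W_3}|$, the truncation of inclusion-exclusion after the third term, which holds because the alternating tail is non-positive. Applying this with $A_W=\cU'(W)$ and observing that $\cU'(W_1)\cap\cdots\cap\cU'(W_k)=\{U'\in\cU'\mid W_1+\cdots+W_k\le U'\}=\cU'(W_1+\cdots+W_k)$ gives $|\cT|\le T_1-T_2+T_3$ immediately from the definition~\eqref{E:Ti}. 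For the formula for $T_1$: since $G(U)$ is transitive on $\cP_1(U)$ and (arguing exactly as in the proof of Lemma~\ref{T:IE}) the quantity $|\cU'(W)|$ depends only on the $G(V)$-orbit of $W$, the sum $T_1=\sum_{W\in\cP_1(U)}|\cU'(W)|$ equals $|\cP_1(U)|\cdot|\cU'(W)|$ for any fixed $W\in\cP_1(U)$. Then I would apply Lemma~\ref{L:DC} with $\cW=\cP_1(V)$ and a fixed $U'\in\cU'$ to get $|\cP_1(V)|\cdot|\cU'(W)|=|\cP_1(U')|\cdot|\cU'|$, so $T_1/|\cU'| = |\cP_1(U)|\,|\cP_1(U')|/|\cP_1(V)| = (q^n-1)(q^{n'}-1)/\bigl((q-1)(q^{n+n'}-1)\bigr)$, which at $q=2$ is $(1-2^{-n})(1-2^{-n'})/(1-2^{-n-n'})<1$, exactly as in Lemma~\ref{T:IE}.

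\textbf{Part (b).} The non-emptiness claims are a direct count using~\eqref{E:Pkq}: $|\cP_k(U)|>0$ precisely when $k\le\frac12\dim(U)=\frac{n+n'}{2}$; restricted to $U$ itself (dimension $n$) this needs $k\le n/2$, so for $k\in\{2,3\}$ and $n\ge4$ it fails only for $(n,k)=(4,3)$; and $\cR_k(U)\ne\emptyset$ for $n\ge4$ since a non-degenerate $2$-space, resp.\ a space isometric to $\langle e,f,f'\rangle$, embeds in $U$. The structural identities $S_{\cP,k}=|\cP_k(U)|\cdot|\cU'(W_{\cP,k})|$ and $S_{\cR,k}=|\cR_k(U)|\cdot|\cU'(W_{\cR,k})|$ follow from the orbit observations already recorded before the lemma: $G(U)$ acts transitively on $\cP_k(U)$ and on $\cR_k(U)$ (using $k\le 3$ and the analysis of $\cR_2,\cR_3$ in~\eqref{E:Part}), each such orbit sits inside a single $G(V)$-orbit on $\binom{V}{k}$, and $|\cU'(W)|$ is constant on $G(V)$-orbits; hence each sum in~\eqref{E:Si} collapses to a cardinality times a single term.

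\textbf{The final inequality $|\cT|\le T_1-2S_2+28S_3$.} This is where the real work lies, and I expect it to be the main obstacle. The idea is to pass from $T_2,T_3$ (sums over $k$-subsets of $1$-spaces of $U$) to $S_2,S_3$ (sums over $k$-subspaces of $U$). For $T_2$: a pair $\{W_1,W_2\}$ of distinct $1$-subspaces of $U$ spans a $2$-subspace, and each $2$-subspace $W$ of $U$ arises from exactly $\binom{q+1}{2}=\binom{3}{2}=3$ such pairs when $q=2$; moreover since $U$ is symplectic \emph{every} $2$-subspace of $U$ contains at least $\lceil(q+1)/2\rceil$... more carefully, every $2$-subspace $W$ of $U$ has all of its $q+1=3$ points isotropic in $U$ (as $\cN_1(U)=\emptyset$), so every $2$-subspace of $U$ is a sum of two elements of $\cP_1(U)$, giving $T_2 = 3\,S_2$ and thus $-T_2 = -3S_2 \le -2S_2$ (using $S_2\ge0$). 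For $T_3$: a $3$-element subset of $\cP_1(U)$ spans either a $2$-subspace or a $3$-subspace of $U$; the contribution from triples spanning a $2$-space is bounded by a multiple of $S_2$ which is non-negative and hence can be dropped from the \emph{upper} bound after the $-3S_2$ has been allocated — the bookkeeping must be arranged so that $-3S_2$ plus the $S_2$-part of $T_3$ is still $\le -2S_2$; and each $3$-subspace $W$ of $U$ is spanned by at most $\binom{7}{3}=35$ three-element subsets of its $7$ points (all points isotropic), but only those triples that actually span $W$ count, and the number of lines through a point etc.\ must be subtracted, yielding a coefficient bounded by $28$, whence the $3$-dimensional part of $T_3$ is at most $28\,S_3$. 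Carefully choosing the constants so the $S_2$ terms combine to $-2S_2$ and the leftover $3$-dimensional count is $\le 28$ is the delicate combinatorial heart; I would do this by counting, for a fixed $3$-subspace $W$ of $U$, the number of unordered triples of distinct points of $W$ spanning $W$ (namely $35$ minus the $3\cdot\binom{3}{3}$... i.e.\ minus the triples lying in a common line), and separately tracking how triples that span only a $2$-space feed back a positive multiple of $S_2$ that is absorbed by tightening $-3S_2$ to $-2S_2$.
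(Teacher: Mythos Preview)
Your Part~(a) and the structural half of Part~(b) are correct and match the paper's argument.

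The final inequality, however, is where you go astray. You treat $|\cT|\le T_1-2S_2+28S_3$ as a loosening of $T_1-T_2+T_3$ to be achieved by separate bounds on $T_2$ and $T_3$, and at one point you propose to ``drop'' the positive $S_2$-contribution from $T_3$; since $T_3$ enters with a plus sign this would make the bound smaller, not larger, so that step is illegitimate. You also write ``bounded by $28$'' for the coefficient of $S_3$ when what is needed is an exact count.

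In fact the paper does no bounding at all at this stage: it computes $T_2$ and $T_3$ \emph{exactly}. Over $\F_2$ a $2$-space has three points, so each $2$-subspace $W$ of $U$ arises from exactly $\binom{3}{2}=3$ pairs, giving $T_2=3S_2$. For $T_3$, a $3$-element subset of $\cP_1(U)$ spans either a $2$-space or a $3$-space. A $2$-space has exactly $\binom{3}{3}=1$ such triple, contributing precisely $S_2$; a $3$-space over $\F_2$ has $7$ points and $\binom{7}{3}-7=28$ spanning unordered triples (equivalently $|\GL_3(2)|/3!=28$), contributing $28S_3$. Hence $T_3=S_2+28S_3$ exactly, and
\[
T_1-T_2+T_3 = T_1-3S_2+(S_2+28S_3)=T_1-2S_2+28S_3
\]
is an identity, not an estimate. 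Once you see this, the ``bookkeeping'' you worried about disappears.
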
 
 
\begin{proof}
(a)  It is straightforward to see that, for $W_1,\dots, W_k\in\cP_1(U)$, 
  \begin{equation}\label{E:int}
    \cU'(W_1)\cap\cdots\cap\cU'(W_k)=\cU'(W)\quad\textup{where}\quad
    W=W_1+\cdots+W_k.  
  \end{equation}
  By the inclusion-exclusion principle, see for example~\cite{FS}*{III.7.4},
  we have
  \[
    |\cT|=\sum_{k\ge1}(-1)^{k-1}M_k\quad\textup{where}\quad
    M_k=\sum_{ \{W_1,\dots,W_k\}} |\cU'(W_1)\cap\cdots\cap\cU'(W_k)|
  \]
  and the sum for $M_k$ is over all $k$-element subsets of $\cP_1(U)$.
  It follows from \eqref{E:int} that $M_k =T_k$ as in \eqref{E:Ti}, 
  and that $T_k=0$ if
  $k>|\cP_1(U)|=(q^n-1)/(q-1)$.   Finally, we truncate the summation
  to obtain upper/lower bounds for $|\cT|$ such as
  $T_1-T_2\le T_1-T_2+T_3-T_4\le\cdots\le|\cT|\le\cdots\le T_1-T_2+T_3\le T_1$.
  In particular, $|\cT|\le T_1-T_2+T_3$.
   
  Since the restriction of $G(V)_U$ to $U$ is $G(U)$, and $G(U)$ is
  transitive on $\cP_1(U)$, it follows that $T_1=|\cP_1(U)|\cdot |\cU'(W)|$ for
  a fixed 1-subspace $W$ of $U$. It was
  shown in the proof of Lemma~\ref{T:IE} that
  \[
    \frac{|\cP_1(U)|\cdot|\cU'(W)|}{|\cU'|}
    = \frac{(1-q^{-n})(1-q^{-n'})}{(q-1)(1-q^{-n-n'})}.
  \] 
  Since we are assuming $q=2$, substituting $q=2$ gives the expression for $T_1/|\cU'|$ in part (a).

  (b)   Suppose that $k\in\{2,3\}$. Since $U$ is non-degenerate, it follows
  from~\eqref{E:Part} that   $\cR_k(U)\ne\emptyset$. 
  Observe that $\cP_k(U)=\emptyset$
  if and only if $k>n/2$. Hence, since $n\ge4$,  $\cP_k(U)=\emptyset$
  if and only if $(n,k)=(4,3)$, and so  $S_{\cP,k}=0$ if $(n,k)=(4,3)$. 
  For $(n,k)\ne(4,3)$, in the sums $S_k=S_{\cP,k}+S_{\cR,k}$, the contribution $|\cU'(W)|$ for a
  subspace $W$ of $U$ is the same for each $G_U$-image of $W$, and as we
  noted above there are at most two non-empty $G_U$-orbits on
  $k$-subspaces of $U$  for $k\in\{2,3\}$. Thus using the partition
  $\binom{U}{k}=\cP_k(U)\,\dot{\cup}\,\cR_k(U)$ in \eqref{E:Part}, 
  and choosing subspaces $W$ from each non-empty $G_U$-orbit we obtain
  the expressions for $S_{\cP,k}$ and $S_{\cR,k}$ given in part (b).
  
  Finally we prove the upper bound for $|\cT|$. 
  As $q=2$, each $2$-subspace $W$ of $U$ arises

  as $W=W_1+W_2$ for exactly three pairs $\{W_1,W_2\}$ from $\cP_1(U)$ and hence 
  $T_2= 3S_2$ with $S_2$ as in \eqref{E:Si}.
  Similarly the definition of $T_3$ involves a sum over unordered triples
  $\{W_1,W_2, W_3\}$ from $\cP_1(U)$. If $W=W_1+W_2+W_3$ is a $2$-subspace,
  then $W$ arises exactly once and the contribution to $T_3$ of such triples
  is precisely~$S_2$. On the other hand, if $W$ is a $3$-subspace, then $W$
  arises once for each of its $|\GL_3(2)|/3!=28$ spanning unordered triples
  of $1$-subspaces. Thus $T_3= S_2+28S_3$ with $S_2, S_3$ as in \eqref{E:Si}.
  Thus by part (a), $|\cT|\le T_1-T_2+T_3=  T_1-2S_2+28S_3$, as in part (b).
  %
\end{proof} 

\begin{lemma}\label{L:Sp4}
  Suppose that $q=2$, $\min\{n,n'\}\ge 4$ and $k\in\{2,3\}$. Then
  \begin{enumerate}[{\rm (a)}]
  \item
    $\displaystyle
    \frac{|S_k|}{|\cU'|}=  \frac{S_{\cP,k}}{|\cU'|} +  \frac{S_{\cR,k}}{|\cU'|}$, where
    \[  \frac{S_{\cP,k}}{|\cU'|} = 
    \frac{|\cP_k(U)|\cdot|\cP_k(U')|}{|\cP_k(V)|}\ \ \mbox{and}\ \  
    \frac{S_{\cR,k}}{|\cU'|} = \frac{|\cR_k(U)|\cdot|\cR_k(U')|}{|\cR_k(V)|};
    \]
    \item   $\displaystyle   \frac{S_{\cP,2}}{|\cU'|}
    =\frac{1}{12}\prod_{i=0}^{1}\frac{(1-2^{-n+2i})(1-2^{-n'+2i})}
         {1-2^{-n-n'+2i}};$  and 
  \item
    $\displaystyle
      \frac{S_{\cP,3}}{|\cU'|}<\frac{1}{112}\cdot\frac{S_{\cP,2}}{|\cU'|},\qquad
      \frac{S_{\cR,2}}{|\cU'|}=\frac{1}{12}\cdot\frac{T_1}{|\cU'|},\qquad
      \frac{S_{\cR,3}}{|\cU'|}=\frac{1}{16}\cdot\frac{S_{\cP,2}}{|\cU'|}.
    $
  \end{enumerate}
\end{lemma}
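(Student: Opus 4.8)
The plan is to compute each of the five quantities by a uniform application of the double counting lemma, exploiting the fact (recorded just before \eqref{E:Si}) that $\cP_k(V)$, $\cR_k(V)$ are $\Sp(V)$-orbits for $k\in\{2,3\}$, and likewise $\cP_k(U)$, $\cR_k(U)$ are $\Sp(U)$-orbits, with the action of $\Sp(U)$ on $U$ induced by the stabiliser $G(V)_U$. For part~(a), I would fix $U'\in\cU'=\Binom{V}{n'}$ and apply Lemma~\ref{L:DC} twice with $G=\Sp(V)$: once with $\cW=\cP_k(V)$ to get $|\cP_k(V)|\cdot|\cU'(W_{\cP,k})|=|\cP_k(U')|\cdot|\cU'|$, and once with $\cW=\cR_k(V)$ to get $|\cR_k(V)|\cdot|\cU'(W_{\cR,k})|=|\cR_k(U')|\cdot|\cU'|$. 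Combining these with the orbit-sum formulas $S_{\cP,k}=|\cP_k(U)|\cdot|\cU'(W_{\cP,k})|$ and $S_{\cR,k}=|\cR_k(U)|\cdot|\cU'(W_{\cR,k})|$ from Lemma~\ref{L:Sp2}(b) gives the two displayed ratios immediately; note that when $(n,k)=(4,3)$ the claim $S_{\cP,k}/|\cU'|=|\cP_k(U)|\cdot|\cP_k(U')|/|\cP_k(V)|$ still holds trivially since all three of $|\cP_3(U)|$, $|\cP_3(U')|$ are nonzero only when $n,n'\ge6$ — here I should be slightly careful and either note $\min\{n,n'\}\ge 4$ forces $n'\ge 4$ but possibly $n'=4$, so I would interpret the $\cP_3$ formula as $0=0$ in that degenerate subcase, or simply restrict the $\cP_3$ statements to the nondegenerate range and handle $(n,k)=(4,3)$ separately as in Lemma~\ref{L:Sp2}.

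For part~(b), I substitute $q=2$ into the explicit count \eqref{E:Pkq} for $k=2$: $|\cP_2(X)|=\frac{(2^{m}-1)(2^{m-2}-1)}{(2^2-1)(2-1)}=\frac{(2^m-1)(2^{m-2}-1)}{3}$ where $m=\dim(X)$. Plugging $m=n,n',n+n'$ into $S_{\cP,2}/|\cU'|=|\cP_2(U)|\cdot|\cP_2(U')|/|\cP_2(V)|$ from part~(a) and simplifying the factor $\tfrac{1}{3}\cdot\tfrac{1}{3}\cdot 3=\tfrac13$ together with pulling out powers of $2$ to convert $2^m-1$ into $2^{m}(1-2^{-m})$, the powers of $2$ cancel (since $n+(n-2)+n'+(n'-2)=(n+n')+(n+n'-2)$) and one is left with $\tfrac13\cdot\tfrac{1}{2^2}\prod_{i=0}^{1}\frac{(1-2^{-n+2i})(1-2^{-n'+2i})}{1-2^{-n-n'+2i}}$; the constant is $\tfrac{1}{12}$ after accounting for one more factor arising from the $2^2-1$ versus $2^{2}$ normalisation, which I would verify explicitly.

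For part~(c) I need the cardinalities of the non-totally-isotropic orbits and the $k=3$ totally isotropic orbit. The two ratios $S_{\cR,2}/|\cU'|$ and $S_{\cR,3}/|\cU'|$ follow by computing $|\cR_2(X)|=\Binom{X}{2}$-type counts and $|\cR_3(X)|$ (using the description $\cR_3$ is one $\Sp$-orbit, so $|\cR_3(X)|=|\binom{X}{3}|-|\cP_3(X)|$) at $\dim X\in\{n,n',n+n'\}$, then forming the ratio in part~(a) and comparing with the already-computed $T_1/|\cU'|$ and $S_{\cP,2}/|\cU'|$; the claimed clean constants $\tfrac{1}{12}$ and $\tfrac{1}{16}$ should drop out after the Gaussian-binomial-style cancellations, again with all $q$-dependence disappearing because $q=2$ is fixed. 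The inequality $S_{\cP,3}/|\cU'|<\tfrac{1}{112}S_{\cP,2}/|\cU'|$ is the one genuinely non-exact step: here I would write $S_{\cP,3}/|\cU'|=|\cP_3(U)|\cdot|\cP_3(U')|/|\cP_3(V)|$, divide by the $k=2$ analogue, and use \eqref{E:Pkq} to express the quotient $|\cP_3(X)|/|\cP_2(X)|=\frac{2^{\dim X-4}-1}{2^3-1}=\frac{2^{\dim X-4}-1}{7}$, so the ratio of ratios becomes $\frac{(2^{n-4}-1)(2^{n'-4}-1)}{7\cdot 7\cdot(2^{n+n'-4}-1)/7}=\frac{(2^{n-4}-1)(2^{n'-4}-1)}{7(2^{n+n'-4}-1)}<\frac{2^{n-4}\cdot 2^{n'-4}}{7\cdot 2^{n+n'-4}/2}=\frac{2}{7\cdot 2^{4}}=\frac{1}{56}$ — wait, I expect the correct bound to come out as $\tfrac{1}{112}$, so I would track the constants carefully: bounding $(2^{n-4}-1)(2^{n'-4}-1)<2^{n-4}2^{n'-4}=2^{n+n'-8}$ and $2^{n+n'-4}-1\ge 2^{n+n'-4}(1-2^{-4})=\tfrac{15}{16}2^{n+n'-4}$ gives the quotient $<\tfrac{16}{15\cdot 7}\cdot 2^{-4}=\tfrac{1}{105}<\tfrac{1}{112}$ is false, so the sharp version uses $n,n'\ge 6$ here (forced since $\cP_3(U)\ne\emptyset$ needs $n\ge6$), giving a stronger bound; I would use $\min\{n,n'\}\ge 6$ in this subclaim.

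The main obstacle is bookkeeping: keeping the exact rational constants $\tfrac{1}{12}$, $\tfrac{1}{112}$, $\tfrac{1}{16}$ straight through the Gaussian-binomial cancellations at $q=2$, and correctly handling the boundary cases where a totally isotropic orbit $\cP_k$ is empty (namely $(n,k)=(4,3)$, and $n'=4$ with $k=3$), where the displayed identities must be read as $0=0$ or the subclaim restricted to the range in which $\cP_3$ is nonempty. None of the individual computations is deep; the care lies in not dropping a factor of $q-1=1$ versus $q^2-1=3$ somewhere.
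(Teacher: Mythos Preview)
Your overall architecture matches the paper's proof: part~(a) via Lemma~\ref{L:DC} applied to the $\Sp(V)$-orbits $\cP_k(V)$ and $\cR_k(V)$, part~(b) by direct substitution of~\eqref{E:Pkq} at $q=2$, and part~(c) by forming the three ratios from part~(a) and comparing with $T_1/|\cU'|$ and $S_{\cP,2}/|\cU'|$. Your alternative computation of $|\cR_3(X)|$ as $|\binom{X}{3}|-|\cP_3(X)|$ is valid and gives the same closed form as the paper's approach via the radical, so that step is fine.

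The one genuine gap is your handling of the inequality $S_{\cP,3}/|\cU'|<\tfrac{1}{112}\cdot S_{\cP,2}/|\cU'|$. You correctly obtain the exact ratio
\[
\frac{S_{\cP,3}/|\cU'|}{S_{\cP,2}/|\cU'|}
=\frac{(2^{n-4}-1)(2^{n'-4}-1)}{7\,(2^{n+n'-4}-1)},
\]
but then try crude numerator/denominator bounds and land at $\tfrac{1}{105}$, which is \emph{larger} than $\tfrac{1}{112}$. Restricting to $n,n'\ge 6$ does not rescue this: even with $n+n'\ge 12$ the same style of bound gives $\tfrac{256}{7\cdot 16\cdot 255}>\tfrac{1}{112}$. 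The fix is not to bound crudely at all. Rewrite the ratio exactly as
\[
\frac{1}{112}\cdot\frac{(1-2^{-n+4})(1-2^{-n'+4})}{1-2^{-n-n'+4}},
\]
by pulling out $2^{n-4}$, $2^{n'-4}$, $2^{n+n'-4}$ from the three brackets. The second factor is strictly less than~$1$ for $n,n'\ge4$ (it vanishes if $n=4$ or $n'=4$, and for $n,n'\ge6$ one checks $(1-a)(1-b)<1-ab$ with $a=2^{-n+4},\,b=2^{-n'+4}\in(0,1)$, equivalent to $a+b>2ab$). This is precisely what the paper does, and it yields the constant $\tfrac{1}{112}$ on the nose.
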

    
\begin{proof}
  (a)~The first formula holds since $S_k=S_{\cP,k}+S_{\cR,k}$ by \eqref{E:Si}.
  By Lemma~\ref{L:Sp2}(b),  $S_{\cP,k}=0$ if and only if $(n,k)=(4,3)$, 
  and in this case $\cP_k(U)=\emptyset$ and  the formula for $S_{\cP,k}/|\cU'|$ holds trivially.
  Suppose that $(n,k)\ne(4,3)$ and hence $S_{\cP,k}\ne0$. Then
  by  Lemma~\ref{L:Sp2}(b),
  $S_{\cP,k}=|\cP_k(U)|\cdot|\cU'(W)|$ for a chosen $W\in\cP_k(U)$.
  Applying Lemma~\ref{L:DC} with $\cW=\cP_k(V)$, so $W\in\cW\cap U$
  and for a chosen $U'\in\cU'$, gives
  $|\cP_k(V)|\cdot |\cU'(W)|=|\cP_k(U')|\cdot |\cU'|$.
  Thus, 
  \[
    \frac{S_{\cP,k}}{|\cU'|} = \frac{|\cP_k(U)|\cdot|\cU'(W)|}{|\cU'|}
    = \frac{|\cP_k(U)|\cdot|\cP_k(U')|}{|\cP_k(V)|}.
  \]
  A similar argument verifies the formula for $S_{\cR,k}/|\cU'|$.
  This proves part~(a).

  (b)~Setting $q=2$ in~\eqref{E:Pkq} gives
    \begin{equation}\label{E:Pk}
    |\cP_k(U)|
    =\prod_{i=0}^{k-1}\frac{2^{n-2i}-1}{2^{k-i}-1}
    =2^{k(n-k)-\binom{k}{2}}\prod_{i=0}^{k-1}\frac{1-2^{-n+2i}}{1-2^{-k+i}},
    \end{equation}
    with analogous expressions for $|\cP_k(U')|$ and $|\cP_k(V)|$. These
    expressions with
    $x\vcentcolon=({k(n-k)-\binom{k}{2})+(k(n'-k)-\binom{k}{2})-(k(n+n'-k)-\binom{k}{2}})$ yield
  \begin{align*}
  \frac{S_{\cP,k}}{|\cU'|}&=
  2^x
  \prod_{i=0}^{k-1}\frac{1{-}2^{-n+2i}}{1{-}2^{-k+i}}
  \cdot \frac{1{-}2^{-n'+2i}}{1{-}2^{-k+i}}
  \cdot \frac{1{-}2^{-k+i}}{1{-}2^{-n-n'+2i}}\\ 
  &=2^{-k(3k-1)/2}\prod_{i=0}^{k-1}\frac{(1-2^{-n+2i})(1-2^{-n'+2i})}{(1-2^{-k+i})(1-2^{-n-n'+2i})}.   
  \end{align*}
  Moreover $2^{k(3k-1)/2}\prod_{i=0}^{k-1}(1-2^{-k+i})$ equals 12 if $k=2$,
  and 1344 if $k=3$. For $k=2$, this verifies the stated formula
  for $S_{\cP,2}/|\cU'|$, proving part (b), and for $k=3$, we have
  \[
    \frac{S_{\cP,3}}{|\cU'|}
    =\frac{1}{1344}\prod_{i=0}^{2}\frac{(1-2^{-n+2i})(1-2^{-n'+2i})}
         {1-2^{-n-n'+2i}}.
  \]
  
  (c)~First we relate $S_{\cP,3}/|\cU'|$ to $S_{\cP,2}/|\cU'|$.
  \begin{align*}
    \frac{S_{\cP,3}}{|\cU'|}
    &=\frac{1}{112{\cdot}12}
    \left(\prod_{i=0}^{1}\frac{(1-2^{-n+2i})(1-2^{-n'+2i})}{1-2^{-n-n'+2i}}\right)
    \frac{(1-2^{-n+4})(1-2^{-n'+4})}{1-2^{-n-n'+4}}\\
    &< \frac{1}{112} \frac{S_{\cP,2}}{|\cU'|}.
  \end{align*}

  We next apply Lemma~\ref{L:DC} with $\cW=\cR_2(U)=\Binom{V}{2}$
  and note that the subspace $W_{\cR,2}$ of Lemma~\ref{L:Sp2}(b) lies in $\cW$.
  Then 
  $|\Binom{V}{2}|\cdot |\cU'(W_{\cR,2})|=|\Binom{U'}{2}|\cdot |\cU'|$
  by Lemma~\ref{L:DC}. Thus, by Lemma~\ref{L:Sp2}(b), 
  \[
    \frac{S_{\cR,2}}{|\cU'|}=\frac{|\Binom{U}{2}|\cdot |\cU'(W_{\cR,2})|}{|\cU'|} 
    = \frac{|\Binom{U}{2}|\cdot |\Binom{U'}{2}|}{|\Binom{V}{2}|}. 
  \]
  The number of non-degenerate $2$-subspaces of $U$ is
  \begin{equation}\label{E:nd2}
    |\cR_2(U)|=\left|\Binom{U}{2}\right|
    =\frac{|\Sp_n(2)|}{|\Sp_2(2)||\Sp_{n-2}(2)|}
    =\frac{2^{2(n-1)}(1-2^{-n})}{3},
  \end{equation}
  with analogous expressions for  $U', V$, and these yield
  \begin{align*}
    \frac{S_{\cR,2}}{|\cU'|}
    &=\frac{2^{2(n-1)}(1-2^{-n})}{3} \cdot \frac{2^{2(n'-1)}(1-2^{-n'})}{3}
      \cdot \frac{3}{2^{2(n+n'-1)}(1-2^{-n-n'})} \\
    &= \frac{(1-2^{-n})(1-2^{-n'})}{12(1-2^{-n-n'})} 
      =  \frac{1}{12}\cdot\frac{T_1}{|\cU'|},
  \end{align*}
  where the last equality follows from Lemma~\ref{L:Sp2}(a). 
    
  It remains to prove the formula for $S_{\cR,3}/|\cU'|$. Since $n\ge4$,
  both $S_{\cR,3}$ and $S_{\cP,2}$ are nonzero.
  We use Lemma~\ref{L:DC} with $G=\Sp(V)$ and
  $\cW = \cR_3(V)$ the $G$-orbit of $3$-subspaces
  of $V$ which are not totally isotropic. The subspace
  $W_{\cR,3}$ of Lemma~\ref{L:Sp2}(b) lies in $\cW$ and
  Lemma~\ref{L:DC} implies that  
  $|\cW|\cdot |\cU'(W_{\cR,3})|=|\cW(U')|\cdot |\cU'|$.
  Since $\cW(U')=\cR_3(U')$, Lemma~\ref{L:Sp2}(b) yields
  \[
  \frac{S_{\cR,3}}{|\cU'|}=
  \frac{|\cR_3(U)|\cdot |\cU'(W_{\cR,3})|}{|\cU'|} 
  =\frac{|\cR_3(U)|\cdot |\cR_3(U')|}{|\cR_3(V)|}. 
  \]
  We evaluate $|\cR_3(U)|$ as follows:
  for $W\in\cR_3(U)$, its radical
  $R=W\cap W^\perp$ is 1-dimensional and is determined by $W$, and $W/R$
  is a non-degenerate 2-subspace of  $R^\perp/R$. Thus the number of such
  subspaces $W$ contained in $U$ equals the number $2^n-1$ of choices
  for $R$ in $U$ times the number  of non-degenerate 2-subspaces of
  the non-degenerate $(n-2)$-space  $(R^\perp\cap U)/R$, which by~\eqref{E:nd2}
  equals $2^{2(n-3)}(1-2^{-n+2})/3$. Thus
  \[
  \left|\cR_3(U)\right|
  = (2^n-1) \frac{2^{2(n-3)}(1-2^{-n+2})}{3}
  = \frac{2^{3(n-2)}}{3} \prod_{i=0}^1 (1-2^{-n+2i})   
  \] 
  with analogous expressions for $\cR_3(U'), \cR_3(V)$. These yield
  \begin{align*}
  \frac{S_{\cR,3}}{|\cU'|}&=
  \frac{2^{3(n-2)}\prod_{i=0}^1 (1-2^{-n+2i})\cdot 2^{3(n'-2)}
    \prod_{i=0}^1 (1-2^{-n'+2i})}{3\cdot 2^{3(n+n'-2)}\prod_{i=0}^1 (1-2^{-n-n'+2i})}  \\
  &= \frac{1}{2^6\cdot 3}\cdot \prod_{i=0}^{1}\frac{(1-2^{-n+2i})(1-2^{-n'+2i})}
         {1-2^{-n-n'+2i}}\ =  \frac{1}{16}\cdot \frac{S_{\cP,2}}{|\cU'|},
  \end{align*}
  where the last equality follows from part (b). This completes the proof.
\end{proof}  

\begin{proof}[Proof of Theorem~$\ref{T:Sp}$]
  By Lemma~\ref{T:IE}, we have $\varphi<1/(q-1)$ for all $n,n',q$, and
  $\varphi<\frac{3}{2}q^{-1}$ if either $q\ge3$, or 
  $\min\{n,n'\}=2$. It remains to consider the case 
  $q=2$ with $\min\{n,n'\}\ge  4$. By Lemma~\ref{L:Sp2}(b),
  \[
  |\cT|\le T_1 -2S_2+28S_3 =T_1-2(S_{\cP,2}+S_{\cR,2}) + 28(S_{\cP,3} + S_{\cR,3}).
\] 
Next, applying Lemma~\ref{L:Sp4}(c) gives
\[
\varphi=  \frac{|\cT|}{|\cU'|}
  < \frac{T_1}{|\cU'|}-\frac{2\cdot S_{\cP,2}}{|\cU'|}
    - \frac{2\cdot T_1}{12\cdot|\cU'|}
    + \frac{28\cdot S_{\cP,2}}{112\cdot |\cU'|}
    + \frac{28\cdot S_{\cP,2}}{16\cdot|\cU'|}
  = \frac{5}{6}\frac{T_1}{|\cU'|}.
\]
Finally, $T_1/|\cU'|<1$  by Lemma~\ref{L:Sp2}(a), and hence
$|\cT|/|\cU'| < \frac{5}{6} =\frac{5}{3q}$. 
\end{proof}

\section{The orthogonal case}\label{S:O}

Let $V=(\F_q)^{n+n'}$ be a non-degenerate orthogonal space endowed
with a non-degenerate  quadratic form $Q\colon V\to\F_q$.
  We assume that $V$ is elliptic, hyperbolic or parabolic,
  and we denote its type by $\eps$ where $\eps=-,+,\circ$, respectively.

  Let $\beta\colon V\times V\to\F_q$ denote the corresponding polar form:
  \[
  \beta(u,v)=Q(u+v)-Q(u)-Q(v).
  \]
  The adjective ``non-degenerate'' can be applied  to $Q$ or $\beta$ and it means
  that $\textup{QRad}(V)=\{0\}$ or $\textup{BRad}(V)=\{0\}$ where
  \begin{align*}
    \textup{QRad}(V)&=\{u\in V\mid \textup{$Q(u)=0$ and $\beta(u,v)=0$ for all $v\in V$}\},\textup{ and}\\
    \textup{BRad}(V)&=\{u\in V\mid  \textup{$\beta(u,v)=0$ for all $v\in V$}\}.
  \end{align*}
  It is clear that
  $\textup{QRad}(V)\subseteq\textup{BRad}(V)$. Indeed, equality holds
  unless both $q$ is even and $\dim(V)$ is odd (see, for 
  example, \cite{KL}*{Proposition~2.5.1}).  
  We shall consider non-degenerate subspaces $U,U'$ of $V$ of
  dimensions~$n$ and $n'$, that is to say, $U$ and $U'$ endowed with the
  restricted forms $Q_{\mid U}$ and $Q_{\mid U'}$ 
  satisfy $\textup{QRad}(U)=\textup{QRad}(U')=\{0\}$.
  In this section we shall additionally assume that
  $\textup{BRad}(V)=\textup{BRad}(U)=\textup{BRad}(U')=\{0\}$.
  This means that, if $q$ is even,
  then both~$n$ and~$n'$ must be even.
  Thus if at least one of $n, n'$
  is odd then we assume that $q$ is odd. This explains the entries in
  Tables~\ref{tab1} and~\ref{T:type}.

Recall that
in Theorem~\ref{T:main1}, $\cU$ and $\cU'$ are orbits of subspaces under the isometry group of $V$.
Not all subspaces of a given  odd dimension are equivalent under isometries of $V$ and 
as discussed in Section~\ref{S:proof1}, we give the required details in Remark~\ref{R:type} below.


\begin{table}[!ht]
    \caption{The subspace type $\sigma$ and the intrinsic type $\tau$ of a
    subspace $U$ of $V$.\newline When $\dim(U)\dim(U^\perp)$ is odd, we
    identify $-1,1$ with $-,+$, respectively.}
  \centering
  \begin{tabular}{cccccl}
  \toprule
  $\dim(U)$&$\dim(U^\perp)$&$q$&$\sigma$&$\tau$&\textup{comment}\\
  \midrule
  \textup{even}&\textup{all}&\textup{all}&$\{-,+\}$&$\sigma$
    &$\sigma=\textup{isometry type of $Q_{\mid U}$}$\\
  \textup{odd}&\textup{even}&\textup{odd}&$\{-,+\}$&$\circ$
    &$\sigma=\textup{isometry type of $Q_{\mid U^\perp}$}$\\
  \textup{odd}&\textup{odd}&\textup{odd}&$\{-,+\}$&$\circ$
    &$\sigma=\delta(U)$\hskip2mm \textup{see \eqref{E:delta}}\\
  \bottomrule
  \end{tabular}\label{T:type}
\end{table}

\begin{remark}\label{R:type}
  The \emph{isometry type}  $\eps\in\{+,-,\circ\}$ where
  \[  
  +=\textup{hyperbolic},\quad -=\textup{elliptic},\quad
  \circ=\textup{parabolic}
  \]
 of a non-degenerate
 orthogonal space $V$ (or its quadratic form $Q$) is defined as usual
 in terms of the Witt index of the space, see for
 example~\cite{Taylor}*{p.~139}.  However, there are two possibilities
 for the type of a non-degenerate subspace $U$ of $V$. We call one the
 \emph{intrinsic type of $U$} (we denote it by $\tau$). It is related
 to the restricted quadratic form $Q_{\mid U}$ which makes $U$ into a
 non-degenerate space.  The other is the \emph{subspace type of $U$}
 (we denote it by $\sigma$) and it depends on how $U$ embeds into $V$.
 To be completely unambiguous, it should be called the ``type of $U$
 as a subspace of $V$'', but when the parent space~$V$ is unambiguous
 we sometimes abbreviate this to ``subspace type'' or simply
 ``type''. When the product $\dim(U)\dim(U^\perp)$ is odd, and hence
 $q$ is odd, the subspace type depends on a function $\delta(U)$ defined
 in~\eqref{E:delta}  depending on the \emph{discriminant}
 $\textup{Disc}(U)$ of $U$. (We give an explicit instance 
 of such subspaces in Example~\ref{Ex}.) The discriminant is $\square$ or $\boxtimes$,
 according as a Gram matrix for the polar form of $Q_{\mid U}$ has
 square or non-square determinant, respectively,
 see~\cite{KL}*{p.~32}. Define
 \begin{equation}\label{E:delta}
  \delta(U)=
  \begin{cases}
    \phantom{-}1&\textup{if $\textup{Disc}(U)=\square$,}\\
    -1&\textup{if $\textup{Disc}(U)=\boxtimes$.}
  \end{cases}
  \end{equation}
  \begin{enumerate}
  \item[$\circ$]  The\emph{ intrinsic type} $\tau\in\{-,\circ,+\}$ of $U$ is the isometry
    type of $Q_{\mid U}$.
  \item[$\circ$] The \emph{subspace type} $\sigma\in\{-,+\}$ of $U$ (or more precisely, the type of $U$
    as a subspace of $V$) is described in Table~\ref{T:type} and satisfies
    the following:
  \begin{enumerate}[{\rm (i)}]
  \item $\sigma$ is the isometry type of $Q_{\mid U^\perp}$ if
    $\dim(U)$ is odd and $\dim(U^\perp)$ is even i.e. if the product
    $\dim(V)\dim(U)$ is odd,
  \item $\sigma$ is the isometry type of $Q_{\mid U}$ if $\dim(U)$
    is even, and
  \item $\sigma$ is $\delta(U)$ if both $\dim(U)$ and $\dim(U^\perp)$ are odd.
  \end{enumerate}
  \end{enumerate}
  Recall that if one of $\dim(U)$ and $\dim(U^\perp)$ is odd, then we assume that
  $q$ is odd, so $\delta(U)$ is defined in~(iii).
The intrinsic type of $U$ is relevant when determining ``internal''
properties of $U$, such as the number of totally singular 1-subspaces,
while the subspace type of $U$ is relevant when considering
``embedding'' properties such as the number of subspaces of $V$
isometric to~$U$.  \emph{We use subspace type in defining $\cU$ and
$\cU'$. }
\end{remark}


In this section we prove Theorem~\ref{T:main1} in the orthogonal case by establishing the following result.

\begin{theorem}\label{T:O}
  Suppose that $n,n'\ge1$ and $V=(\F_q)^{n+n'}$ is a non-degenerate orthogonal
  space of isometry type $\eps$, such that $q\ge3$, and $q$ is odd if at least one of $n, n'$ is odd.
  Let $\cU=\Binom{V}{n}^\eps_\sigma$ and $\cU'=\Binom{V}{n'}^\eps_{\sigma'}$, where
  $\sigma$ and $\sigma'$ are subspace types as in Table~$\ref{T:type}$,
  and fix $U\in \cU$. Then either 
  \[
 \varphi= \frac{|\{U'\in \cU'\mid U\cap U'\ne0\}|}{|\cU'|}
  \le\frac{43}{16q},
  \]
  or $(n,n',q,\eps) = (1,1,3, +)$, $\sigma=\sigma'$, and $\varphi=1$.
\end{theorem}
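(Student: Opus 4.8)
The plan is to follow the strategy set out in Section~\ref{S:strategy}, bounding $\varphi$ via the decomposition~\eqref{E:rho2} into a contribution from singular $1$-subspaces of $U$ and one from non-degenerate $1$-subspaces of $U$. First I would recall the bounds $\varphi\le c_{1,\mathbf{O}}/|\F|+c_2/(q|\F|)$ established in~\eqref{E:rhoUO}, where $c_2/(q|\F|)$ bounds $|\bigcup_{W\in\cP_1(U)}\cU'(W)|/|\cU'|$ via Lemma~\ref{L:cP1}, and $c_{1,\mathbf{O}}/|\F|$ bounds the non-degenerate contribution via Lemma~\ref{L:cN1}. Since $|\F|=q$ here, this gives $\varphi\le (c_{1,\mathbf{O}}+c_2/q)/q$, so the theorem reduces to showing $c_{1,\mathbf{O}}+c_2/q\le\tfrac{43}{16}$ for all admissible $(n,n',q,\eps,\sigma,\sigma')$ with $q\ge3$ (and $q$ odd if $nn'$ is not even), except in the single exceptional configuration. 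The $\tfrac{43}{16}$ presumably arises as a worst-case sum: one expects something like $c_2<\tfrac{5}{3}$ (mirroring the symplectic estimate, since $\cP_1$ behaves like the symplectic case) contributing $c_2/q\le\tfrac{5}{9}$ at $q=3$, plus $c_{1,\mathbf{O}}$ bounded by roughly $2$, and the arithmetic $\tfrac{5}{9}+2$ overshoots, so the real content is sharper simultaneous control of both terms.

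The key steps, in order, are as follows. (1)~Invoke Lemma~\ref{L:cP1} to get an explicit $c_2=c_2(n,n',q)$ and show it is bounded by a universal constant; because all $1$-subspaces relevant to $\cP_1$ are singular and $G(V)$ is transitive on them, the double counting Lemma~\ref{L:DC} applies cleanly and $c_2$ will be a ratio of Gaussian-type polynomials in $q^{-1}$, monotone in $q$, so evaluating at $q=3$ gives the bound. (2)~Invoke Lemma~\ref{L:cN1} for $c_{1,\mathbf{O}}$: here the subtlety is that $G(V)$ may have two orbits on $\cN_1(V)$ (the two discriminant classes of non-degenerate $1$-subspaces), so instead of a single double-counting identity one sums $\sum_{W\in\cN_1(U)}|\cU'(W)|$ as in~\eqref{E:rhoO-N}, splitting $\cN_1(U)$ according to the subspace type (equivalently discriminant) of $W$ in $V$, applying Lemma~\ref{L:DC} separately on each $G(V)$-orbit, and recombining. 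This is where the dependence on $\eps,\sigma,\sigma'$ enters, and the formulas for $|\cN_1^{\pm}(U)|$, $|\cU'|$ and the orbit sizes must be assembled from standard counts (e.g.\ \cite{Taylor}, \cite{KL}). (3)~Combine the two explicit bounds, treat the parity cases ($n,n'$ both even; exactly one odd; both odd) separately since the subspace-type bookkeeping differs, and in each case reduce to a finite check that the resulting rational function of $q^{-1}$ (and the discrete type data) is at most $\tfrac{43}{16}$, which after establishing monotonicity in $q$ amounts to checking $q=3$ (and $q=4$ where relevant). (4)~Finally, isolate the genuinely exceptional small case: when $(n,n')=(1,1)$ the spaces $U,U'$ are themselves non-degenerate $1$-subspaces, $\cU'(W)$ degenerates, and for $(n,n',q,\eps)=(1,1,3,+)$ with $\sigma=\sigma'$ every $U'\in\cU'$ meets the fixed $U$ (indeed $U'=U$ is forced, or the line count collapses), giving $\varphi=1$; verify directly that this is the only configuration where the general bound fails, and that excluding it (as the hypotheses of Theorem~\ref{T:main1} do, together with $q\ge3$) leaves $\varphi\le\tfrac{43}{16q}$.

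The main obstacle will be step~(2), the analysis of $c_{1,\mathbf{O}}$: the two-orbit structure on non-degenerate $1$-subspaces forces a case split by discriminant and by the parities of $n,n',\dim(U^\perp)$, and the relevant subspace-count formulas involve the type $\eps$ of $V$ and the subspace types $\sigma,\sigma'$ in a way that does not simplify uniformly. One must be careful that $|\cU'(W)|$ depends only on the $G(V)$-orbit of $W$ (not just on $\dim W$), that the sum $\sum_{W\in\cN_1(U)}|\cU'(W)|$ correctly weights each discriminant class by its multiplicity inside $U$, and that the resulting estimate is uniform enough to survive at $q=3$. Getting a clean enough closed form — presumably the content of Lemma~\ref{L:cN1} — so that the final combination lands at or below $\tfrac{43}{16}$, and verifying the monotonicity claims that reduce the infinitely many $(n,n',q)$ to a finite check, is the delicate part; everything else is the now-familiar double-counting-plus-inclusion-exclusion machinery already used in the unitary and symplectic sections.
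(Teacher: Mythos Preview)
Your plan is correct and mirrors the paper's approach: bound $\varphi$ via~\eqref{E:rho2}, control the singular contribution by Lemma~\ref{L:cP1} and the non-singular contribution by Lemma~\ref{L:cN1} (with the two-orbit split on $\cN_1$ exactly as you describe), then reduce to small cases. Two concrete points your outline underestimates. First, Lemmas~\ref{L:cP1} and~\ref{L:cN1} both assume $n,n'\ge2$, so the entire range $\min\{n,n'\}=1$ (not only $(n,n')=(1,1)$) must be handled separately by a direct computation; the paper does this in Lemma~\ref{n=1}, getting $\varphi\le 5/(2q)$ there and isolating the genuine exception $(1,1,3,+)$. Second, the endgame is not pure monotonicity in $q$: the paper uses an elementary inequality (Lemma~\ref{L:boundprod}) to reduce the general $X_1+X_2$ bound to the worst case $m=m'=1$, then treats $n,n'\in\{2,3\}$ individually using the sharper constants $Y_1,Z_1$ from Lemma~\ref{L:cN1}; even so, the single case $(n,n',q)=(2,2,3)$ is not covered by these estimates and is verified by a {\sc GAP} computation. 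Apart from these two details your sketch matches the paper's proof.
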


The first result Lemma~\ref{L:DiscType} shows that, if $n$ is even and
$q$ is odd, then the discriminant $\textup{Disc}(U)$ determines the intrinsic
type of $U$, and conversely. This is not always the case when $nq$ is odd, see line 3 of Table~\ref{T:type}
where subspaces with the same intrinsic type $\circ$ may have different discriminants (and subspace types), see Example~\ref{Ex}. 

\begin{example}\label{Ex}
  Suppose $q$ is odd and $V=\langle e_1,f_1,e_2,f_2,e_3,f_3\rangle=(\F_q)^6$
  is endowed with the hyperbolic quadratic form $Q\colon V\to\F_q$ defined by
  \[
    Q(x_1e_1+y_1f_1+x_2e_2+y_2f_2+x_3e_3+y_3f_3)=x_1y_1+x_2y_2+x_3y_3.
  \]

  As $q$ is odd, we can choose $\lambda\in\F_q^\times$ such that $\lambda$
  is a non-square.  Restricting to
  $U=\langle e_1,f_1,e_3-\frac{\lambda}{2} f_3\rangle$
  and $U'=\langle e_2,f_2,e_3-\frac{1}{2}f_3\rangle$ gives
  \begin{align*}
  Q(xe_1+yf_1+z(e_3-\frac{\lambda}{2} f_3))&=xy-\frac{\lambda}{2} z^2,\quad\textup{and}\\
  Q(xe_2+yf_2+z(e_3-\frac{1}{2}f_3))&=xy-\frac{1}{2}z^2.
  \end{align*}
  Therefore $U$ and $U'$ are non-degenerate parabolic 3-subspaces.
  The restricted forms $Q_{\mid U}$ and $Q_{\mid U'}$ have polar forms $\beta_U$
  and $\beta_{U'}$. The (symmetric) bilinear forms and Gram matrices
  defining $\beta_U$ and $\beta_{U'}$ are
  \begin{align*}
    x_1y_2+x_2y_1-\lambda z_1z_2\quad\textup{and}\quad &x_1y_2+x_2y_1-z_1z_2\\
    \begin{pmatrix}
      0&1&0\\1&0&0\\0&0&-\lambda
    \end{pmatrix}\quad\textup{and}\quad&
    \begin{pmatrix}
      0&1&0\\1&0&0\\0&0&-1
    \end{pmatrix}.
  \end{align*}
  The Gram matrices have determinants $\lambda,1$ so
  $\textup{Disc}(U)=\boxtimes,\textup{Disc}(U')=\square$.
  Hence $U$ and $U'$ are non-isometric with intrinsic type $\circ$,
  and subspace types
  $\boxtimes$ and $\square$, respectively. Moreover, $V=U\oplus U'$ holds
  as $\lambda\ne1$, and $U^\perp\ne U'$.
\end{example}

\begin{lemma}\label{L:DiscType}
Suppose that $V$ is a non-degenerate orthogonal space over $\F_q$
where $q$ is odd. Let $W$ be a non-degenerate subspace of $V$, and define
\begin{enumerate}[{\rm (a)}]
  \item Then $\delta(V)=\delta(W)\delta(W^\perp)$ where $\delta$ is defined in~\eqref{E:delta}; and
  \item If $\dim(W)=2k$ is even and $W$ has intrinsic type $\tau\in\{-,+\}$,
    then $\delta(W)=\tau\cdot(-1)^{\dim(W)\nicefrac{(q-1)}{4}}=\tau\cdot(-1)^{k\nicefrac{(q-1)}{2}}$.
\end{enumerate}
\end{lemma}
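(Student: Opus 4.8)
The plan is to reduce both parts to the behaviour of the discriminant under orthogonal direct sums and to the classification of $2$-dimensional non-degenerate quadratic forms over $\F_q$ with $q$ odd. For part (a), the key observation is that if $V = W \oPerp W^\perp$, then a Gram matrix for the polar form $\beta$ of $Q$ on $V$ can be taken in block-diagonal form $\mathrm{diag}(A, B)$ where $A$ is a Gram matrix for $\beta_{\mid W}$ and $B$ is one for $\beta_{\mid W^\perp}$. Hence $\det(A \oplus B) = \det(A)\det(B)$, and since the product of two squares is a square, the product of a square and a non-square is a non-square, and the product of two non-squares is a square (because $\F_q^\times/(\F_q^\times)^2$ has order $2$), we get $\mathrm{Disc}(V) = \mathrm{Disc}(W)\cdot\mathrm{Disc}(W^\perp)$ in the sense of this quotient group. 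Translating through the definition~\eqref{E:delta} of $\delta$, which is exactly the isomorphism of this quotient group with $\{\pm1\}$, gives $\delta(V) = \delta(W)\delta(W^\perp)$. I should be a little careful about the independence of $\delta$ from the choice of Gram matrix: changing basis multiplies $\det$ by a nonzero square, so the class in $\F_q^\times/(\F_q^\times)^2$, and hence $\delta$, is well-defined.

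For part (b), I would argue by induction on $k = \dim(W)/2$, using part (a) to split off hyperbolic planes. A non-degenerate quadratic space of dimension $2k$ and intrinsic type $+$ is an orthogonal sum of $k$ hyperbolic planes, while one of type $-$ is an orthogonal sum of $k-1$ hyperbolic planes and one anisotropic plane. So the whole statement reduces to computing $\delta$ for a single hyperbolic plane $H$ and for a single anisotropic plane $N$. A hyperbolic plane has polar-form Gram matrix $\left(\begin{smallmatrix} 0 & 1 \\ 1 & 0 \end{smallmatrix}\right)$ with determinant $-1$; an anisotropic plane (the norm form of $\F_{q^2}/\F_q$) has discriminant equal to the class of $-$(non-square)$\,=\,$non-square times $(-1)$, i.e.\ its Gram determinant is $-$(a non-square), which differs from the hyperbolic value by a non-square factor. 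Thus $\delta(N) = -\delta(H)$, and the sign of $\delta(H)$ itself is governed by whether $-1$ is a square in $\F_q$, i.e.\ by $(-1)^{(q-1)/2}$. Feeding $\delta(H) = (-1)^{(q-1)/2}$ (interpreting $-1$ as a non-square exactly when $(q-1)/2$ is odd) and $\delta(N) = -\delta(H) = (-1)^{(q-1)/2 + 1}$ into part (a), a type-$+$ space gives $\delta(W) = \bigl((-1)^{(q-1)/2}\bigr)^k = (-1)^{k(q-1)/2}$, and a type-$-$ space gives $\delta(W) = \bigl((-1)^{(q-1)/2}\bigr)^{k-1}\cdot(-1)^{(q-1)/2+1} = (-1)^{k(q-1)/2 + 1}$. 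In both cases this is $\tau\cdot(-1)^{k(q-1)/2}$ upon identifying $\tau = +$ with $+1$ and $\tau = -$ with $-1$; and since $\dim(W) = 2k$, the exponent $k(q-1)/2$ equals $\dim(W)(q-1)/4$, giving the stated two forms of the identity.

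I expect the main obstacle to be bookkeeping rather than mathematics: getting the sign conventions in~\eqref{E:delta} to line up consistently with the identification of $\F_q^\times/(\F_q^\times)^2$ with $\{\pm1\}$, and with the convention (used implicitly in Table~\ref{T:type}) identifying the hyperbolic/elliptic types $+,-$ with $1,-1$. In particular one must check that ``$-1$ is a square $\iff (q-1)/2$ is even'' is used with the correct parity throughout, and that the anisotropic plane genuinely has discriminant differing from the hyperbolic one by a non-square factor (equivalently, the two non-isometric $2$-dimensional non-degenerate forms over $\F_q$ have distinct discriminants, which is standard for $q$ odd). Once these conventions are pinned down, the induction via part (a) is immediate and the computation is the short one indicated above.
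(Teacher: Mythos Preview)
Your argument is correct. The paper's own proof is much terser: it simply cites \cite{KL}*{Proposition~2.5.11(i)} for part~(a) and \cite{KL}*{Proposition~2.5.10} for part~(b), deducing the formula $\delta(W)=\tau(-1)^{n(q-1)/4}$ from the quoted equivalence ``$\textup{Disc}(W)=\square$ if and only if $\tau=(-1)^{n(q-1)/4}$''. Your route is genuinely different in that it is self-contained: you prove~(a) directly from the block-diagonal shape of the Gram matrix, and you reduce~(b) via~(a) to the explicit computation of $\delta$ for a hyperbolic plane and an anisotropic plane, then assemble the answer inductively. The paper's approach buys brevity at the cost of depending on an external reference; your approach buys independence from~\cite{KL} and makes transparent exactly where the sign $(-1)^{(q-1)/2}$ enters (namely, whether $-1\in(\F_q^\times)^2$). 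Both are entirely standard, and your bookkeeping with $\delta(H)=(-1)^{(q-1)/2}$ and $\delta(N)=-\delta(H)$ is correct.
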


\begin{proof}
  (a)~It follows from \cite{KL}*{Proposition 2.5.11(i)} that $\delta(V)=\delta(W)\delta(W^\perp)$\kern-0.5pt.

  (b)~Suppose now that $n\vcentcolon=\dim(W)$ is even and the isometry group of
  $W$ is $\textup{O}_n^\tau(q)$. Then paraphrasing~\cite{KL}*{Proposition 2.5.10, p.\;33} shows that
\begin{center}
$\delta(W)=1$ if and only if $\textup{Disc}(W) = \square$ if and only if $\tau=(-1)^{n(q-1)/4}$.
\end{center}
Hence we also have
\begin{center}
$\delta(W)=-1$ if and only if $\textup{Disc}(W) = \boxtimes$ if and only if $\tau=-(-1)^{n(q-1)/4}$.
\end{center}
Combining these cases gives $\delta(W)=\tau (-1)^{n(q-1)/4}$.
\end{proof}

Note that $\cU\vcentcolon=\Binom{V}{n}^\eps_\sigma$ and $\cU'\vcentcolon=\Binom{V}{n'}^\eps_{\sigma'}$
are orbits under the isometry group $G(V)$. \emph{Our convention
is that $U\in\Binom{V}{n}^\eps_\sigma$ has intrinsic type $\tau$
and $U'\in\Binom{V}{n'}^\eps_{\sigma'}$ has intrinsic type $\tau'$.}
If precisely one of $n$ and $n'$ is odd (and hence $q$ is odd), then we will
interchange the roles of $U$ and $U'$, if necessary, and assume
that $n'$ is odd.

\subsection{Orbits on \texorpdfstring{$1$}{}-subpaces}\label{s:1spaces}

Let $U$ have intrinsic type $\tau$, and recall from Table~\ref{T:type} 
that $\tau=\sigma$ if $n$ is even and $\tau=\circ$ if $n$ is odd.
It is convenient to identify the possible symbols $-,\circ,+$ for $\tau$
with the numbers $-1,0,1$ respectively.
It follows from~\cite{Taylor}*{Theorem~11.5} that the number of
non-zero singular vectors of $U$ is
\[
  q^{n-1}-\tau q^{n/2-1}+\tau q^{n/2}-1
  =q^{n-1}(1-\tau q^{-n/2}+\tau q^{-n/2+1}-q^{-n+1}).
\]
If $n$ is odd, then $\tau=\circ$ and the middle two terms disappear. Thus we
define a function $\gamma^\tau_n(q)$ as follows
\begin{align}\label{E:gamma}
  \gamma^\tau_n(q) &\vcentcolon=1-\tau q^{-n/2}+\tau q^{-n/2+1}-q^{-n+1}
  \quad\textup{ for $\tau\in\{-,\circ,+\}$.}
\end{align}
Dividing the number of (non-zero) singular vectors by $q-1=q(1-q^{-1})$ gives
the number of (totally) singular 1-subspaces. Thus the above equations show
\begin{align}\label{E:cP1}
  |\cP_1(U)|=\frac{q^{n-2}\gamma^\tau_n(q)}{1-q^{-1}}
  =q^{n-2}\left(\frac{1-q^{-n+1}}{1-q^{-1}}+\tau q^{-n/2+1}\right).
\end{align}
Using Equation~\eqref{E:cP1} and $|\cN_1(U)|=(q^n-1)/(q-1)-|\cP_1(U)|$ yields
\begin{equation}\label{E:NP}
  |\cN_1(U)|=q^{n-1}(1-\tau q^{-n/2})  =q^{n-1}(1-\tau q^{-\lfloor n/2\rfloor})
\end{equation}
where $\tau=0$ if $n$ is odd.

Before proceeding, we bound the
quantity $\gamma^\tau_n(q)$ in~\eqref{E:gamma}.


\begin{lemma}\label{L:gamma}
  If $\tau\in\{-,+\}$, then
  $\gamma_{n}^\tau(q)=(1+\tau q^{-n/2+1})(1-\tau q^{-n/2})$,
  and if $k\ge 1$ and $q>1$, then 
  \[
    1-q^{-k+1}< \gamma_{2k}^-(q)< \gamma_{2k+1}^\circ(q)
    <1< \gamma_{2k}^+(q) < (1-q^{-1}) \left(\frac{1}{1-q^{-1}} +  q^{-k+1}\right).
  \]
\end{lemma}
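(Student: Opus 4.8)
The plan is to reduce everything to a handful of elementary factorizations. First I would dispose of the identity: for $\tau\in\{-,+\}$ we have $\tau^2=1$, so expanding $(1+\tau q^{-n/2+1})(1-\tau q^{-n/2})=1-\tau q^{-n/2}+\tau q^{-n/2+1}-\tau^2 q^{-n+1}=1-\tau q^{-n/2}+\tau q^{-n/2+1}-q^{-n+1}$, which is exactly the definition of $\gamma_n^\tau(q)$ in~\eqref{E:gamma}. Using this I would record, for $k\ge1$, the explicit forms
\[
  \gamma_{2k}^-(q)=1+q^{-k}-q^{-k+1}-q^{-2k+1},\qquad
  \gamma_{2k+1}^\circ(q)=1-q^{-2k},\qquad
  \gamma_{2k}^+(q)=1-q^{-k}+q^{-k+1}-q^{-2k+1},
\]
the outer two from the just-verified factorization and the middle one by putting $\tau=\circ$ (read as $0$) in~\eqref{E:gamma}, so that its two ``middle'' terms vanish.

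Next I would establish the five successive inequalities of the chain by computing consecutive differences, each of which factors transparently:
\[
  \gamma_{2k}^-(q)-\bigl(1-q^{-k+1}\bigr)=q^{-k}\bigl(1-q^{-k+1}\bigr),\qquad
  \gamma_{2k+1}^\circ(q)-\gamma_{2k}^-(q)=(q-1)\bigl(q^{-k}+q^{-2k}\bigr),
\]
\[
  1-\gamma_{2k+1}^\circ(q)=q^{-2k},\qquad
  \gamma_{2k}^+(q)-1=q^{-k}\bigl(q-1-q^{-k+1}\bigr),
\]
\[
  (1-q^{-1})\left(\frac{1}{1-q^{-1}}+q^{-k+1}\right)-\gamma_{2k}^+(q)=q^{-2k+1}.
\]
Each right-hand side is nonnegative for $q>1$ and $k\ge1$: the second, third and fifth are visibly positive; the first is positive once $1-q^{-k+1}>0$, i.e. $k\ge2$; and the fourth is positive once $q-1-q^{-k+1}>0$, which holds when $q\ge3$ (since $q^{-k+1}\le1$ for $k\ge1$) or when $k\ge2$. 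As the lemma is invoked in the orthogonal setting where $q\ge3$, all five differences are strictly positive, and chaining them gives
\[
  1-q^{-k+1}<\gamma_{2k}^-(q)<\gamma_{2k+1}^\circ(q)<1<\gamma_{2k}^+(q)<(1-q^{-1})\left(\frac{1}{1-q^{-1}}+q^{-k+1}\right),
\]
as required.

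I do not expect a genuine obstacle here; the argument is a sequence of sign checks. The only points needing care are the bookkeeping with the exponents $-n/2$, $-n/2+1$, $-n+1$ when comparing the functions of even dimension $2k$ with the one of odd dimension $2k+1$ (in particular remembering that the $\tau=\circ$ specialization kills the two middle terms of~\eqref{E:gamma}), and the degenerate boundary values---$k=1$ in the first difference, and $(q,k)=(2,1)$ in the fourth---where the corresponding inequality degenerates to an equality; these are precisely the cases excluded by the hypotheses under which Lemma~\ref{L:gamma} is used.
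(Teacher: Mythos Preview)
Your approach matches the paper's: verify the factorization via $\tau^2=1$, then check each link of the chain by an elementary sign computation (the paper is terser, dismissing the three middle inequalities as ``immediate'' from $\gamma_{2k+1}^\circ(q)=1-q^{-2k}$, whereas you compute all five consecutive differences explicitly).

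One wrinkle worth flagging: the first difference $q^{-k}(1-q^{-k+1})$ vanishes at $k=1$ for \emph{every} $q$, so your assertion that ``as the lemma is invoked in the orthogonal setting where $q\ge3$, all five differences are strictly positive'' is not quite right---the restriction $q\ge3$ rescues the fourth difference but not the first. You do catch the $k=1$ collapse in your closing paragraph, but the lemma's stated hypothesis is $k\ge1$, $q>1$, not $q\ge3$; under those hypotheses the leftmost strict inequality actually degenerates to $0<0$ when $k=1$. This is a harmless imprecision shared by the paper's own proof (which writes $1-q^{-k+1}<(1-q^{-k+1})(1+q^{-k})$ without comment), and it is irrelevant in the applications, where the lower bound on $\gamma_{2k}^-(q)$ is invoked only with $k=\lfloor(n+n')/2\rfloor\ge2$.
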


\begin{proof}
  If $\tau\in\{-,+\}$, then 
  $\gamma_{n}^\tau(q)=(1+\tau q^{-n/2+1})(1-\tau q^{-n/2})$ because we identify
  $\pm$ with $\pm1$ and then $\tau^2=1$.
  The flanking inequalities are
  \begin{align*}
    \gamma_{2k}^+(q) &= 1 + q^{-k+1} -q^{-k}  - q^{-2k+1}\\
    &<1+(1-q^{-1})q^{-k+1} = (1-q^{-1}) \left(\frac{1}{1-q^{-1}} + q^{-k+1}\right),
  \end{align*}
  and $1-q^{-k+1}<(1 - q^{-k+1})(1 +q^{-k})=\gamma_{2k}^-(q)$.
  The remaining inequalities
  $\gamma_{2k}^-(q)<\gamma_{2k+1}^\circ(q) <1< \gamma_{2k}^+(q)$
  are immediate since $\gamma_{2k+1}^\circ(q)=1-q^{-2k}$.
\end{proof}

If $q$ is even then $G(U)$ is transitive on $\cN_1(U)$. However, if $q$
is odd and $n=\dim(U)>1$, then $G(U)$ has two orbits in  $\cN_1(U)$
which we denote $\cN_1^\alpha(U)$ where $\alpha\in\{-,+\}$. The definition
of $\cN_1^\alpha(U)$ depends on $\alpha$, the parity of~$n$, and the
function $\delta$ in~\eqref{E:delta} as follows:
\begin{equation}\label{E:cN1alpha}
\cN_1^\alpha(U)=
\begin{cases}
  \{W{\in}\cN_1(U)\mid \delta(W)=\alpha\}&\textup{if $n$ is even,}\hskip4.5mm(\ref{E:cN1alpha}a)\\
  \{W\in\cN_1(U)\mid \textup{$W^\perp{\cap} U$ has type $\alpha$}\}
  &\textup{if $n{>}1$ is odd.\hskip0.5mm(\ref{E:cN1alpha}b)}
\end{cases}
\end{equation}
We comment on the word `type' in~(\ref{E:cN1alpha}b).
If $W\in\cN_1(U)$ then $W^\perp\ge U^\perp$, and since $U\cap U^\perp=0$
we see that $W^\perp\cap U$ is a
hyperplane of $U$. Therefore in (\ref{E:cN1alpha}b), where $n$ is odd, 
$\dim(W^\perp\cap U)$ is even so $\alpha$ is both
the intrinsic type and subspace type of $W^\perp\cap U$ (see line~1 of Table~\ref{T:type}).
The next result determines the subset of subspaces in $\cN_1^\alpha(V)$
which are contained in $U$.

\begin{table}[!ht]
  \caption{Values of $\cN_1^\alpha(V)\cap\cN_1(U)$ as a function of $\dim(U)$ and
    $\dim(U^\perp)$.}
  \centering
  \begin{tabular}{llcl}
  \toprule
  $\dim(U)$&$\dim(U^\perp)$&$\cN_1^\alpha(V)\cap\cN_1(U)$&\textup{definitions}\\
  \midrule
  $2m$&$2m'$&$\cN_1^{\alpha}(U)$& $m\vcentcolon=\lfloor n/2\rfloor$, $m'\vcentcolon=\lfloor n'/2\rfloor$ \\
  $2m$&$2m'+1$&$\cN_1^{\alpha\beta}(U)$
    &$\beta\vcentcolon=\delta(V)(-1)^{(m+m')(q-1)/2}$\\
  $2m+1$&$2m'$&$\cN_1^{\alpha\gamma}(U)$
    &$\gamma\vcentcolon=\delta(U^\perp)(-1)^{m'(q-1)/2}$\\
  $2m+1$&$2m'+1$&$\cN_1^{\alpha\eta}(U)$
    &$\eta\vcentcolon=\delta(U)(-1)^{m(q-1)/2}$\\
  \bottomrule
  \end{tabular}\label{T:Ant}
\end{table}

\begin{lemma}\label{L:int}
    Suppose that $q$ is odd and $U$ is a non-degenerate $n$-subspace of $V$ with
    $n>1$. For $\alpha\in\{-,+\}$, let $\cN_1^\alpha(V)$ be as in
    $\eqref{E:cN1alpha}$. Then $\cN_1^\alpha(V)\cap\cN_1(U)$
    is as in  Table~$\ref{T:Ant}$.
\end{lemma}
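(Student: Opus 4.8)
The plan is to fix one $W\in\cN_1(U)$ and determine the $G(U)$-orbit $\cN_1^{\beta}(U)$ containing it as a function of the $G(V)$-orbit $\cN_1^{\alpha}(V)$ that contains it. Since any non-degenerate $1$-subspace of $V$ lying inside $U$ is automatically non-degenerate in $U$, we have $\cN_1^\alpha(V)\cap\cN_1(U)=\{W\in\cN_1(U)\mid W\in\cN_1^\alpha(V)\}$, so this will suffice. The key structural input is the orthogonal direct sum decomposition
\[
W^\perp=(W^\perp\cap U)\oPerp U^\perp\qquad\textup{for }W\in\cN_1(U),
\]
where $W^\perp$ is computed in $V$: here $U^\perp\subseteq W^\perp$ because $W\subseteq U$, the two summands are mutually perpendicular because $W^\perp\cap U\subseteq U$, their intersection lies in $U\cap U^\perp=0$, and a dimension count gives $\dim(W^\perp\cap U)+\dim(U^\perp)=(\dim U-1)+(\dim V-\dim U)=\dim V-1=\dim(W^\perp)$. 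Both summands are non-degenerate.

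Next I would record a uniform expression for the orbit label. For a non-degenerate orthogonal space $S$ over $\F_q$ with $q$ odd and $\dim S>1$, and a non-degenerate $1$-subspace $W$ of $S$, let $\lambda_S(W)\in\{-,+\}$ denote the symbol with $W\in\cN_1^{\lambda_S(W)}(S)$. If $\dim S$ is even, then $\lambda_S(W)=\delta(W)$ by $(\ref{E:cN1alpha}a)$. If $\dim S$ is odd, then by $(\ref{E:cN1alpha}b)$ the symbol $\lambda_S(W)$ is the isometry type of the even-dimensional non-degenerate hyperplane $W^\perp\cap S$ of $S$; applying Lemma~\ref{L:DiscType}(b) to $W^\perp\cap S$, and then Lemma~\ref{L:DiscType}(a) inside $S$ to get $\delta(W^\perp\cap S)=\delta(S)\delta(W)$, yields $\lambda_S(W)=\delta(S)\delta(W)(-1)^{(\dim S-1)(q-1)/4}$. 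Hence in all cases
\[
\lambda_S(W)=\delta(W)\,\mu_S,\qquad\textup{where}\quad
\mu_S=\begin{cases}1&\textup{if $\dim S$ is even,}\\\delta(S)(-1)^{(\dim S-1)(q-1)/4}&\textup{if $\dim S$ is odd.}\end{cases}
\]
For the case $S=U$ one uses here that the hyperplane $W^\perp\cap S$ appearing in $(\ref{E:cN1alpha}b)$ is $W^\perp\cap U$, which is correct since $W\subseteq U$.

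Applying this with $S=V$ and with $S=U$, and using $\delta(W)^2=1$, gives $\lambda_V(W)=\lambda_U(W)\,\mu_V\mu_U$, a relation that is independent of $W$. Therefore $W\in\cN_1^\alpha(V)$ if and only if $\lambda_U(W)=\alpha\mu_V\mu_U$, i.e. $W\in\cN_1^{\alpha\mu_V\mu_U}(U)$, and so
\[
\cN_1^\alpha(V)\cap\cN_1(U)=\cN_1^{\alpha\mu_V\mu_U}(U).
\]
It remains only to identify $\mu_V\mu_U$ with the factor $1,\beta,\gamma,\eta$ of Table~\ref{T:Ant} in the four rows. If $\dim U=2m$ and $\dim(U^\perp)=2m'$, both $U$ and $V$ are even-dimensional, so $\mu_V\mu_U=1$. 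If $\dim U=2m$ and $\dim(U^\perp)=2m'+1$, then $\mu_U=1$ and $\dim V=2(m+m')+1$, so $\mu_V\mu_U=\delta(V)(-1)^{(m+m')(q-1)/2}=\beta$. If $\dim U=2m+1$ and $\dim(U^\perp)=2m'$, then $U$ and $V$ are both odd-dimensional, $\mu_U=\delta(U)(-1)^{m(q-1)/2}$ and $\mu_V=\delta(V)(-1)^{(m+m')(q-1)/2}$, whence $\mu_V\mu_U=\delta(V)\delta(U)(-1)^{(2m+m')(q-1)/2}=\delta(V)\delta(U)(-1)^{m'(q-1)/2}$ (using that $q-1$ is even), and this equals $\gamma$ because $\delta(V)\delta(U)=\delta(U^\perp)$ by Lemma~\ref{L:DiscType}(a). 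Finally, if $\dim U=2m+1$ and $\dim(U^\perp)=2m'+1$, then $\dim V=2(m+m'+1)$ is even, $\mu_V=1$, and $\mu_V\mu_U=\delta(U)(-1)^{m(q-1)/2}=\eta$. These four values match Table~\ref{T:Ant}.

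The only delicate part is bookkeeping: one must read the word ``type'' in $(\ref{E:cN1alpha}b)$ as the isometry type of an \emph{even-dimensional} non-degenerate space so that Lemma~\ref{L:DiscType}(b) is applicable, and one must reduce the exponents of $-1$ modulo $2$ (for example $(2m+m')(q-1)/2\equiv m'(q-1)/2\pmod{2}$ because $q-1$ is even). I do not anticipate a genuine obstacle beyond this: the decomposition $W^\perp=(W^\perp\cap U)\oPerp U^\perp$, together with the multiplicativity of $\delta$, does all the work.
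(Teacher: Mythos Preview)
Your proof is correct and uses the same underlying ingredients as the paper (the multiplicativity of $\delta$ from Lemma~\ref{L:DiscType}(a), the type--discriminant relation from Lemma~\ref{L:DiscType}(b), and the two branches of~\eqref{E:cN1alpha}), but you organise them differently. The paper treats the four rows of Table~\ref{T:Ant} one at a time: in each case it computes $\delta(W)$ or $\delta(W^\perp\cap U)$ directly, determines the intrinsic type of the relevant even-dimensional space, and then checks both inclusions $\cN_1^\alpha(V)\cap\cN_1(U)\subseteq\cN_1^{?}(U)$ and its reverse by hand. Your route instead extracts a single uniform identity $\lambda_S(W)=\delta(W)\,\mu_S$ valid for any ambient non-degenerate space $S$ (with $\mu_S$ depending only on $S$), immediately deduces $\lambda_V(W)=\lambda_U(W)\,\mu_V\mu_U$, and so reduces all four rows to computing the single sign $\mu_V\mu_U$. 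This is cleaner and avoids the repeated ``conversely'' arguments; the paper's version, on the other hand, is more explicit and avoids introducing the auxiliary function $\mu_S$. One small remark: the orthogonal decomposition $W^\perp=(W^\perp\cap U)\oPerp U^\perp$ that you highlight at the start is correct but is not actually used in your argument; what you use are the decompositions $S=W\oPerp(W^\perp\cap S)$ for $S\in\{U,V\}$ and $V=U\oPerp U^\perp$.
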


\begin{proof}
  If $\dim(U)=2m$ and $\dim(U^\perp)=2m'$, then it follows
  from~(\ref{E:cN1alpha}a) that $\cN_1^\alpha(V)\cap\cN_1(U)=\cN_1^\alpha(U)$.
  This proves the first line of Table~\ref{T:Ant}.
  
  Suppose next that $\dim(V)$ is odd as in lines 2 or 3 of Table~\ref{T:Ant}.
  If $W\in\cN_1^\alpha(V)$, then $\dim(W^\perp)=2(m+m')$ and $W^\perp$
  has intrinsic type $\alpha$ by~(\ref{E:cN1alpha}b). Thus,
  by Lemma~\ref{L:DiscType}(b),
  $\delta(W^\perp)=\alpha(-1)^{(m+m')(q-1)/2}$, and so
  by Lemma~\ref{L:DiscType}(a), and with $\beta$ as in Table~\ref{T:Ant},
  we have
  \begin{equation}\label{e:deltaW}
  \delta(W)=\delta(V)\delta(W^\perp)=\delta(V)\cdot \alpha(-1)^{(m+m')(q-1)/2}
    =\alpha\beta.
  \end{equation}
  We use the symbol $\oPerp$ to denote `perpendicular direct sum'.
  Since $V=U\oPerp U^\perp$ and $U = W \oPerp (W^\perp \cap U)$, it follows
  from  Lemma~\ref{L:DiscType}(a)  that $\delta(U)=\delta(V)\delta(U^\perp)$ and
  $\delta (W^\perp \cap U)=\delta (W)\delta (U).$ Then, by \eqref{e:deltaW},
  and with $\gamma$ as in Table~\ref{T:Ant},
  \begin{align}\label{e:deltaW2}
    \delta (W^\perp \cap U)&= \delta(W)\delta(V)\delta(U^\perp)
    = \alpha\beta\delta(V)\delta(U^\perp)
    \\&=\alpha(-1)^{(m+m')(q-1)/2}\delta(U^\perp)=\alpha\gamma (-1)^{m(q-1)/2}.\notag
  \end{align}
  
  Suppose that $\dim(U)=2m$ and $\dim(U^\perp)=2m'+1$. Let 
  $W\in \cN_1^\alpha(V)\cap\cN_1(U)$.
  Then $\delta(W)=\alpha\beta$ by \eqref{e:deltaW}, and so $W\in\cN_1^{\alpha\beta}(U)$  by~(\ref{E:cN1alpha}a). Hence
  $\cN_1^\alpha(V)\cap\cN_1(U)\subseteq\cN_1^{\alpha\beta}(U)$. Conversely,
  if $W\in\cN_1^{\alpha\beta}(U)$, then
  $\delta(W)=\alpha\beta$  by~(\ref{E:cN1alpha}a), and 
  by Lemma~\ref{L:DiscType}(a) and the definition of $\beta$ we have 
  $\delta(W^\perp)=\delta(V)\delta(W)=\alpha(-1)^{(m+m')(q-1)/2}$.
  It then follows from Lemma~\ref{L:DiscType}(b) that $W^\perp$ has intrinsic
  type $\alpha$. Hence, by~(\ref{E:cN1alpha}b), 
  $W\in \cN_1^\alpha(V)\cap\cN_1(U)$ and the reverse
  inclusion  $\cN_1^{\alpha\beta}(U)\subseteq\cN_1^\alpha(V)\cap\cN_1(U)$ holds.
  This proves line 2 of Table~\ref{T:Ant}.
  
  Suppose next that $\dim(U)=2m+1$ and $\dim(U^\perp)=2m'$, and let 
  $W\in \cN_1^\alpha(V)\cap\cN_1(U)$.  Then $\delta(W^\perp\cap U)=\alpha\gamma (-1)^{m(q-1)/2}$ by \eqref{e:deltaW2}. Now $\dim(W^\perp\cap U)=2m$, 
  and so by  Lemma~\ref{L:DiscType}(b), $W^\perp\cap U$ has intrinsic 
  type $\alpha\gamma$, and hence $W\in\cN_1^{\alpha\gamma}(U)$  by~(\ref{E:cN1alpha}b).
  Hence
  $\cN_1^\alpha(V)\cap\cN_1(U)\subseteq\cN_1^{\alpha\gamma}(U)$. Conversely,
  if $W\in\cN_1^{\alpha\gamma}(U)$, then $W^\perp\cap U$ has intrinsic type 
  $\alpha\gamma$ by~(\ref{E:cN1alpha}b), and hence  $\delta(W^\perp\cap U)=\alpha\gamma(-1)^{m(q-1)/2}$ by Lemma~\ref{L:DiscType}(b). Applying
  Lemma~\ref{L:DiscType}(a) three times gives 
  \[
  \delta(W^\perp)=\delta(V)\delta(W) =\delta(V) \delta(U)\delta(W^\perp\cap U)
  =\delta(U^\perp)\delta(W^\perp\cap U).
  \]
  Then, since $\delta(W^\perp\cap U)=\alpha\gamma(-1)^{m(q-1)/2}$, 
  and using the definition of $\gamma$, we conclude that $\delta(W^\perp)=
  \alpha(-1)^{(m+m')(q-1)/2}$, and hence that 
  $W^\perp$ has intrinsic type $\alpha$ by Lemma~\ref{L:DiscType}(b).
  Thus $W\in \cN_1^\alpha(V)$ by~(\ref{E:cN1alpha}b) and the reverse inclusion
  $\cN_1^{\alpha\gamma}(U)\subseteq\cN_1^\alpha(V)\cap\cN_1(U)$ holds.
  This proves line 3 of Table~\ref{T:Ant}.

  Finally, suppose that $\dim(U)=2m+1$ and $\dim(U^\perp)=2m'+1$, and let
  $W\in\cN_1^\alpha(V)\cap\cN_1(U)$. Then $\dim(V)$ is even
  and $\delta(W)=\alpha$ by~(\ref{E:cN1alpha}a). Furthermore,
  $U=W\oPerp(W^\perp\cap U)$ so, by  Lemma~\ref{L:DiscType}(a), and with 
  $\eta$ as in Table~\ref{T:Ant}, we have
  \[
  \delta(W^\perp\cap U)=\delta(W)\delta(U)=\alpha\delta(U)=\alpha\eta(-1)^{m(q-1)/2}.
  \]
  Thus $W^\perp\cap U$ has intrinsic type $\alpha\eta$
  by Lemma~\ref{L:DiscType}(b), and hence 
  $W\in\cN_1^{\alpha\eta}(U)$ by~(\ref{E:cN1alpha}b) and
  $\cN_1^\alpha(V)\cap\cN_1(U)\subseteq\cN_1^{\alpha\eta}(U)$ holds.
  Conversely, if $W\in\cN_1^{\alpha\eta}(U)$, then $W^\perp\cap U$ has intrinsic type
  $\alpha\eta$ by~(\ref{E:cN1alpha}b), and so  
  $\delta(W^\perp\cap U)=\alpha\eta(-1)^{m(q-1)/2}$
  by Lemma~\ref{L:DiscType}(b). Hence
  $\delta(W)=\delta(U)\delta(W^\perp\cap U)=\alpha$ and
  $\cN_1^{\alpha\eta}(U)\subseteq\cN_1^\alpha(V)\cap\cN_1(U)$ holds.
  This proves line 4 of Table~\ref{T:Ant}.
\end{proof}

Now we obtain a formula for $|\cN_1^\alpha(U)|$, for $\alpha\in\{+,-\}$.
If $n=2m$ is even, then 
$|\Binom{U}{1}^\sigma_\boxtimes|=|\Binom{U}{1}^\sigma_\square|$, and so
$|\cN_1^\pm(U)|=\frac12|\cN_1(U)|$. Thus, by \eqref{E:NP}, 
$|\cN^\alpha_1(U)|=q^{n-1}(1-\tau q^{-m})/2$
by~\eqref{E:NP} where $\tau$ is the intrinsic type of $U$.
If $\dim(U)=2m+1$ is odd, then by~\eqref{E:cN1alpha} we have
\[
  |\cN_1^\alpha(U)|=\left|\Binom{U}{n-1}^\circ_\alpha\right|
  =\frac{|\GO_{2m+1}^\circ(q)|}{|\GO_{1}^\circ(q)\times\GO_{2m}^\alpha(q)|}
  =\frac{q^{2m}(1+\alpha q^{-m})}{2}.
\]
In summary,
\begin{equation}\label{E:cNalpha}
  |\cN_1^\alpha(U)|=
  \begin{cases}
    \displaystyle\frac{q^{n-1}(1-\tau q^{-m})}{2}&\textup{if $n=2m$ and $U$ has type $\tau$,\hskip3mm (\ref{E:cNalpha}\textup{a})}\\[4mm]
    \displaystyle \frac{q^{n-1}(1+\alpha q^{-m})}{2}&\textup{if $n=2m+1$.}
    \hskip29mm (\ref{E:cNalpha}\textup{b})
  \end{cases}
\end{equation}
Note that $\tau$ in (\ref{E:cNalpha}\textup{a}) is both the subspace type and the intrinsic type of $U$.

\subsection{Proof of Theorem~\texorpdfstring{$\ref{T:O}$}{} for \texorpdfstring{$\min\{n,n'\}=1$}{}}\label{s:onedim}

We have sufficient information to deal with the case of Theorem~\ref{T:O} 
where one of $n$ or $n'$ is equal to $1$. Here we come across the 
exceptional case where the general bound on the parameter $\varphi$ in \eqref{E:rho} fails to hold. 

\begin{lemma}\label{n=1}
  Suppose that the hypotheses of Theorem~$\ref{T:O}$ hold with $q$ odd and 
  $\min\{ n, n'\}=1$.  Then either  
  $\varphi\le \frac{5}{2q}$, or  
  $(n,n',q,\eps)\ne(1,1,3,+)$, $\sigma=\sigma'$,  and $\varphi=1$.
  Moreover, Theorem~$\ref{T:O}$ holds if $\min\{ n, n'\}=1$.
\end{lemma}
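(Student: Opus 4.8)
The plan is to reduce to the case $n'=1$, express $\varphi$ as a ratio of sizes of $G(V)$-orbits of non-degenerate $1$-subspaces, and then read everything off from Lemma~\ref{L:int}, \eqref{E:cNalpha} and \eqref{E:NP}. Since by Witt transitivity of $G(V)$ on both $\cU$ and $\cU'$ we have $\varphi=|\{(U,U')\in\cU\times\cU'\colon U\cap U'\ne0\}|/(|\cU|\,|\cU'|)$, which is symmetric in the two factors, and since the conclusion of the lemma is symmetric in $(n,\sigma)$ and $(n',\sigma')$, I would assume $n'=1$. Then $\cU'=\Binom{V}{1}^\eps_{\sigma'}$ is an orbit of non-degenerate $1$-subspaces of $V$; for such a subspace $W$ one has $\dim(W^\perp)=n$, and comparing Table~\ref{T:type} with~\eqref{E:cN1alpha} shows $\cU'=\cN_1^{\sigma'}(V)$ whether $n$ is even or odd. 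As $\dim(U')=1$, the condition $U\cap U'\ne0$ is equivalent to $U'\le U$, so
\[
\varphi=\frac{\bigl|\{\,W\in\cN_1^{\sigma'}(V)\mid W\le U\,\}\bigr|}{|\cN_1^{\sigma'}(V)|}=\frac{|\cN_1^{\sigma'}(V)\cap\cN_1(U)|}{|\cN_1^{\sigma'}(V)|}.
\]

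I would first dispose of $n=1$, i.e.\ $n=n'=1$, so $\dim(V)=2$ and hence $\eps\in\{+,-\}$. The only $1$-subspace of $U$ is $U$ itself, which has subspace type $\sigma$; hence $\varphi=0\le\tfrac5{2q}$ if $\sigma\ne\sigma'$, while if $\sigma=\sigma'$ then $U\in\cN_1^{\sigma'}(V)$ and $\varphi=1/|\cN_1^{\sigma'}(V)|$. By~\eqref{E:cNalpha} (with $\dim(V)=2$) the two orbits $\cN_1^{\pm}(V)$ have equal size $\tfrac12(q-\eps)$, so $\varphi=2/(q-\eps)$. Now $2/(q+1)\le\tfrac5{2q}$ for all $q$, and $2/(q-1)\le\tfrac5{2q}$ exactly when $q\ge5$; the remaining case $\eps=+$, $q=3$ gives $\varphi=1$, which is precisely the exception $(n,n',q,\eps)=(1,1,3,+)$ with $\sigma=\sigma'$.

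For $n>1$ (recall $q$ is odd) I would apply Lemma~\ref{L:int} to the fixed subspace $U$. Since $\dim(U^\perp)=n'=1$ is odd, only rows~2 and~4 of Table~\ref{T:Ant} are relevant --- row~2 when $n$ is even, row~4 when $n$ is odd --- and in either case $\cN_1^{\alpha}(V)\cap\cN_1(U)=\cN_1^{\alpha s}(U)$ for a sign $s$ independent of $\alpha$. Hence
\[
\varphi=\frac{|\cN_1^{\sigma's}(U)|}{|\cN_1^{\sigma'}(V)|}\le\frac{\max\{|\cN_1^+(U)|,|\cN_1^-(U)|\}}{|\cN_1^{\sigma'}(V)|}.
\]
Writing $m=\lfloor n/2\rfloor$ and substituting the values from~\eqref{E:cNalpha} and~\eqref{E:NP} (applied to $U$ and, with $\dim(V)=n+1$, to $V$), a short computation gives $\varphi\le(1+q^{-m})/(q(1-q^{-m}))$ if $n$ is even and $\varphi\le(1+q^{-m})/(q(1-q^{-m-1}))$ if $n$ is odd. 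Each bound is a decreasing function of $m$, hence maximal at $m=1$, where they equal $\tfrac{q+1}{q(q-1)}$ and $\tfrac1{q-1}$ respectively; since $q\ge3$ is odd, both are at most $\tfrac5{2q}$. Finally, because $\tfrac52<\tfrac{43}{16}$, the bound $\varphi\le\tfrac5{2q}$ together with the exceptional value $\varphi=1$ (which occurs only for $(1,1,3,+)$) gives Theorem~\ref{T:O} in the case $\min\{n,n'\}=1$.

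The hard part is not any single step but the bookkeeping of subspace types: one must keep straight which row of Tables~\ref{T:type} and~\ref{T:Ant} applies (hence which parities of $n$, $n'$ and $\dim(V)$ occur), correctly translate between the intrinsic type $\tau$ and the subspace type when reading $|\cN_1^{\alpha}(U)|$ off~\eqref{E:cNalpha}, and check that the sign $s$ produced by Lemma~\ref{L:int} merely permutes the two orbits $\cN_1^{\pm}(U)$ so that the worst case is $\sigma's=+$. The only genuinely new phenomenon is the two-dimensional hyperbolic space over $\F_3$, which the computation in the second paragraph isolates cleanly as the unique exception.
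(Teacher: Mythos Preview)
Your proof is correct and follows essentially the same route as the paper's: reduce to $n'=1$ by symmetry, treat $n=n'=1$ by a direct count, and for $n\ge 2$ identify $\cU'$ with an orbit $\cN_1^\alpha(V)$, apply Lemma~\ref{L:int} (rows~2 and~4 of Table~\ref{T:Ant}) to rewrite $\varphi$ as $|\cN_1^{\alpha s}(U)|/|\cN_1^{\alpha}(V)|$, and then bound using~\eqref{E:cNalpha}. The only cosmetic differences are that you make the identification $\alpha=\sigma'$ explicit (the paper simply says ``for some $\alpha$''), and that the paper squeezes a slightly sharper bound $\varphi\le 2/q$ for $n\ge2$ where you stop at $\varphi\le 5/(2q)$; both suffice since $5/2<43/16$.
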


\begin{proof}
  Since $5/2 <43/16$ it is sufficient to prove the first assertion.
  Suppose first that $n=n'=1$. If $U\in\cU$, then
  \[
  |U^{\GO_2(q)}|=\frac{|\GO_2(q)|}{|\GO_1(q)\times\GO_1(q)|}
  =\frac{2(q-\eps\cdot 1)}{2\cdot2}=\frac{q-\eps\cdot 1}{2}.
  \]
  Thus $|\cU|=|\cU'|=(q -\eps\cdot 1)/2$. If $\sigma\ne \sigma'$, then
  $\cU, \cU'$ are disjoint sets and hence $\varphi=0 < \frac{5}{2q}$.
  So suppose that $\sigma=\sigma'$. Then $U\in\cU$ and $U'\in\cU'$
  satisfy $U\cap U'\ne0$ precisely when $U=U'$, so 
  \[
  \varphi=\frac{1}{|\cU'|} = \frac{2}{q-\eps\cdot 1}\le \frac{2}{q-1}
  =\frac{2}{q}\left( 1+\frac{1}{q-1}\right).
  \]
  If $q\ge 5$ then $\varphi\le (2/q)\cdot (5/4)=5/(2q)$, as required,
  so assume that $q=3$. 
  If $\eps=-$ then $\varphi = 2/4 < 5/(2q)$. This leaves the exceptional
  case where $\eps=+$ and $q=3$. Here the space $V$ has exactly
  two non-degenerate $1$-subspaces, one of each type. Since $\sigma=\sigma'$,
  we see that $\cU=\cU'=\{U\}=\{U'\}$, and $\varphi=1$. 
  
  Suppose now that $n+n'>2$. By symmetry, we can (and will) assume
  that $n'=1$, so $\cU'=\Binom{V}{1}^\eps_{\sigma'}$ and $n\ge2$. Thus
  $\cU'=\cN_1^\alpha(V)$ as in \eqref{E:cN1alpha} for some $\alpha\in\{+,-\}$. 
  Also $n'=2m'+1$ with $m'=0$, and $n=2m$ or $2m+1$ with $m\ge 1$. Choose
  $U\in\cU=\Binom{V}{n}^\eps_\sigma$.  Then by \eqref{E:rho},
  \[
  \varphi\cdot |\cN_1^\alpha(V)|=|\{U'\in\cN_1^\alpha(V)\mid U'\cap U\ne 0\}|
  = |\cN_1^\alpha(V)\cap\cN_1(U)|.
  \]
  By Lemma~\ref{L:int}, 
  \[
  \cN_1^\alpha(V)\cap\cN_1(U)=
  \begin{cases}
   \cN_1^{\alpha\beta}(U)&\textup{if $n=2m$, where $\beta=\delta(V)(-1)^{m(q-1)/2}$,}\\
  \cN_1^{\alpha\eta}(U)&\textup{if $n=2m+1$, where $\eta=\delta(U)(-1)^{m(q-1)/2}$.}
  \end{cases}
  \]
  If $n=2m$ then the intrinsic type of $U$ is equal to $\sigma$ by
  Table~\ref{T:type}, so by \eqref{E:cNalpha},
\[
\varphi = \frac{|\cN_1^{\alpha\beta}(U)|}{|\cN_1^\alpha(V)|}
	= \frac{q^{2m-1}(1-\sigma q^{-m})/2}{q^{2m}(1+\alpha q^{-m})/2}
	\le \frac{1}{q}\cdot\frac{1+ q^{-m}}{1- q^{-m}}\le \frac{1}{q}\cdot\frac{1+ 3^{-1}}{1- 3^{-1}}=\frac{2}{q}.
\] 
Similarly, if  $n=2m+1$ then the intrinsic type of $V$ is $\eps\in\{-,+\}$,
so by~\eqref{E:cNalpha},
\[
\varphi = \frac{|\cN_1^{\alpha\eta}(U)|}{|\cN_1^\alpha(V)|}
	= \frac{q^{2m}(1+\alpha q^{-m})/2}{q^{2m+1}(1-\eps q^{-m})/2}
	\le \frac{1}{q}\cdot\frac{1+ q^{-m}}{1- q^{-m}}\le \frac{2}{q}.
\]     
Thus in both cases $\varphi\le 2/q<5/(2q)$. This completes the proof.
\end{proof}

\subsection{Strategy for the proof of Theorem~\texorpdfstring{$\ref{T:O}$}{} for \texorpdfstring{$\min\{n,n'\}\ge 2$}{}}\label{s:strategyo}

As in the hypothesis of Theorem~\ref{T:O}, $\cU=\Binom{V}{n}^\eps_\sigma$ and 
$\cU'=\Binom{V}{n'}^\eps_{\sigma'}$, and we choose $U\in\cU$ and $U'\in\cU'$.
By Lemma~\ref{n=1}, we may (and will) assume that $\min\{n,n'\}\ge 2$.
For this general case we follow the strategy described in
Section~\ref{S:strategy} to obtain an upper bound for $\varphi$
as in \eqref{E:rho} of the form given in \eqref{E:rhoUO} in terms of
quantities $c_1, c_2$ given in~\eqref{E:rhoUO-P} and~\eqref{E:rhoO-N}.   

In order to estimate $c_1$ we determine $|\cU'(W)|$ when
$W$ is a nonsingular $1$-subspace of $U$, that is $W \in \cN_1(U)$,
and to estimate $c_2$ we determine $|\cU'(W)|$ when $W$ is a singular
$1$-subspace of $U$, that is $W \in \cP_1(U)$. Here we use the notation
$\cU'(W), \cW(U),$ etc from Section~\ref{S:double}. The following lemma is key.

\begin{lemma}\label{L:key}
  Let $U\in \cU$ and let $m=\lfloor n/2\rfloor$, $m'=\lfloor n'/2\rfloor$.
  For each set $\cW(U)$ in row~$2$ of Table~$\ref{T:X}$,
  row~$3$ lists the quantity $|\cU'(W)|/|\cU'|$, where 
  $(W,U')\in \cW(U)\times \cU'$, depending on the parities of $n, n'$ and $q$.
\end{lemma}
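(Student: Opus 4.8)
\textbf{Proof proposal for Lemma~\ref{L:key}.}

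The plan is to apply the double counting lemma (Lemma~\ref{L:DC}) systematically to each pair $(\cW,\cU')$, where $\cW$ ranges over the $G(V)$-orbits on $1$-subspaces of $V$ that can contain a $1$-subspace of the fixed $U$, namely $\cP_1(V)$ and the orbits $\cN_1^\alpha(V)$ for $\alpha\in\{+,-\}$ (with the understanding that when $q$ is even $G(V)$ is transitive on nonsingular $1$-subspaces, so there is a single orbit $\cN_1(V)$). For a fixed $W\in\cW(U)\subseteq\cW$ and a fixed $U'\in\cU'$, Lemma~\ref{L:DC} gives $|\cW|\cdot|\cU'(W)|=|\cW(U')|\cdot|\cU'|$, whence
\[
\frac{|\cU'(W)|}{|\cU'|}=\frac{|\cW(U')|}{|\cW|}.
\]
Thus the task reduces to computing, for each orbit $\cW$, the cardinalities $|\cW|$ (a $1$-subspace count in $V$) and $|\cW(U')|$ (the number of $1$-subspaces in $U'$ lying in that $G(V)$-orbit). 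For $\cW=\cP_1(V)$ the count $|\cP_1(V)|$ comes from~\eqref{E:cP1} applied with $\dim V=n+n'$ and intrinsic type $\eps$, and $|\cP_1(U')|=|\cW(U')|$ from~\eqref{E:cP1} with the intrinsic type $\tau'$ of $U'$; this yields the entry $|\cP_1(U')|/|\cP_1(V)|$, which after dividing by $q$ will be $c_2/(q|\F|)$ as in~\eqref{E:rhoUO-P}. For $\cW=\cN_1(V)$ (or $\cN_1^\alpha(V)$), $|\cW|$ is read off from~\eqref{E:NP} (or~\eqref{E:cNalpha}), and the crucial input $|\cW(U')|=|\cN_1^\alpha(V)\cap\cN_1(U')|$ is supplied by Lemma~\ref{L:int} together with the explicit formulas~\eqref{E:cNalpha}; this is where the correction signs $\beta,\gamma,\eta$ of Table~\ref{T:Ant} enter, producing the parity-dependent case split displayed in Table~\ref{T:X}.

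The proof then proceeds case by case according to the parities of $n$, $n'$, and $q$, exactly matching the columns of Table~\ref{T:X}. When $q$ is even, both $n$ and $n'$ are even (by the standing hypothesis of Section~\ref{S:O}), $G(V)$ is transitive on $\cN_1(V)$, and there is only one nonsingular row to handle, using~\eqref{E:NP} for both $|\cN_1(V)|$ and $|\cN_1(U')|$. When $q$ is odd there are four subcases from the parities of $(n,n')$; in each one I would invoke the appropriate line of Table~\ref{T:Ant} to identify which $G(U')$-orbit $\cN_1^{\alpha*}(U')$ equals $\cN_1^\alpha(V)\cap\cN_1(U')$ (with $*\in\{\beta,\gamma,\eta\}$ the relevant sign), substitute the cardinality from~\eqref{E:cNalpha}, and simplify the ratio against $|\cN_1^\alpha(V)|$. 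The totally singular row $\cW=\cP_1(V)$ is treated uniformly across all parities via~\eqref{E:cP1}. The resulting closed-form expressions, organised by the parities of $n,n',q$, are precisely the entries of row~$3$ of Table~\ref{T:X}.

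The main obstacle is bookkeeping rather than any genuine difficulty: one must be careful to track the distinction between the \emph{intrinsic} type $\tau$ (resp.\ $\tau'$) of $U$ (resp.\ $U'$) and the \emph{subspace} type $\sigma$ (resp.\ $\sigma'$), since~\eqref{E:cP1} and~\eqref{E:NP} are phrased in terms of the intrinsic type, while the orbits $\cU,\cU'$ are defined via subspace type; Table~\ref{T:type} mediates between them, and in the odd-dimensional cases $\tau=\circ$ so the relevant sign corrections are encoded in $\delta(U)$, $\delta(U^\perp)$. The other delicate point is making sure the sign produced by Lemma~\ref{L:int} (namely $\alpha\beta$, $\alpha\gamma$, or $\alpha\eta$) is correctly fed into the orbit-size formula~\eqref{E:cNalpha}: a sign error here would swap two table entries. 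Once the sign conventions are pinned down, each entry of Table~\ref{T:X} follows from a short substitution and algebraic simplification, and the lemma is proved.
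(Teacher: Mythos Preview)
Your approach is correct and matches the paper's in spirit: both apply Lemma~\ref{L:DC} to each relevant $1$-subspace orbit $\cW$, then use Lemma~\ref{L:int} to identify $\cW(U)$ and $\cW(U')$, yielding row~3 of Table~\ref{T:X}. There is one genuine methodological difference worth noting. In the column ``$(n,n')$ both even'' with $q$ odd, the paper does \emph{not} split $\cN_1(V)$ into the two $G(V)$-orbits $\cN_1^{\pm}(V)$; instead it passes to the conformal group $G=\CO(V)$, under which $\cN_1(V)$ is a single orbit (and $\cU'$ remains an orbit since conformal maps preserve the intrinsic type of an even-dimensional subspace). This gives the table entry $|\cN_1(U')|/|\cN_1(V)|$ in one stroke. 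Your plan instead uses $G(V)$ throughout, obtaining $|\cU'(W)|/|\cU'|=|\cN_1^{\alpha}(U')|/|\cN_1^{\alpha}(V)|$ for $W\in\cN_1^{\alpha}(U)$ via line~1 of Table~\ref{T:Ant}; you would then need the extra observation, immediate from~(\ref{E:cNalpha}a), that this ratio is independent of $\alpha$ and hence equals $|\cN_1(U')|/|\cN_1(V)|$. Both routes work; the paper's $\CO(V)$ trick saves that verification, while your approach has the virtue of using a single group $G(V)$ in every case.
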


\begin{table}[!ht]
  \caption{Possibilities for $\cW(U)$ and $|\cU'(W)|/|\cU'|$, where
    $\alpha\in\{-,+\}$, $\beta, \eta$ as in Table~\ref{T:Ant} and
    $\gamma'=\delta((U')^\perp)(-1)^{m(q-1)/2}$,
      $\eta'=\delta(U')(-1)^{m'(q-1)/2}$.}
  \centering
  \begin{tabular}{lcccc}
  \toprule
  $(n,n',q)$&{\rm\small(all, all, all)}&{\rm\small(even, even, all)}&{\rm\small(even, odd, odd)}&{\rm\small(odd, odd, odd)}\\
  \midrule
  $\cW(U)$&$\cP_1(U)$&$\cN_1(U)$&$\cN_1^{\alpha\beta}(U)$&$\cN_1^{\alpha\eta}(U)$\\
    $\displaystyle\frac{|\cU'(W)|}{|\cU'|}$&$\displaystyle\frac{|\cP_1(U')|}{|\cP_1(V)|}$&$\displaystyle\frac{|\cN_1(U')|}{|\cN_1(V)|}$&$\displaystyle\frac{|\cN_1^{\alpha\gamma'}(U')|}{|\cN_1^\alpha(V)|}$&$\displaystyle\frac{|\cN_1^{\alpha\eta'}(U')|}{|\cN_1^\alpha(V)|}$\\
  \bottomrule
  \end{tabular}\label{T:X}
\end{table}

\begin{proof}
  Let $G(V)$ and $\CO(V)$ denote the isometry group, and the conformal
  orthogonal group of $V$, respectively.    
  We use Lemma~\ref{L:DC} for the $G$-orbits $\cU=\Binom{V}{n}^\eps_\sigma$
  and $\cW$ for the various
  choices of $G$, and $\cW$ described in the first two rows of
  Table~\ref{T:XX}. 
  In each of the four cases, the elements of $\cW$ are 1-subspaces of $V$
  and the set $\cW(U)$ is non-empty.
  For $W\in \cW(U)$, the set $\cU'(W)$ defined before Lemma~\ref{L:DC} satisfies
  \[
  \cU'(W)=\{U'\in \cU'\mid W\le U'\}=\{U'\in \cU'\mid W\le U'\cap U\},
  \]
  and by Lemma~\ref{L:DC},
  \begin{equation}\label{E:key}
    |\cW|\cdot |\cU'(W)|=|\cW(U')|\cdot |\cU'|
    \qquad\textup{for all $(W,U')\in \cW(U)\times \cU'$}.
  \end{equation}
  We next justify  the values of $\cW(U)$ and $\cW(U')$ in  rows~3  and~4  of
  Table~\ref{T:XX}.

  \begin{table}\caption{Values of $\cW(U), \cW(U')$ depending on
      $G, \cW$ where 
      $\beta, \gamma', \eta, \eta'$ are as in Table~$\ref{T:X}$.}
  \centering
  \begin{tabular}{lcccc}
    \toprule
    &{\rm Case 1}&{\rm Case 2}&{\rm Case 3}&{\rm Case 4}\\
    \midrule
    $(n,n',q)$&{\rm\small(all, all, all)}&{\rm\small(even, even, all)}&{\rm\small(even, odd, odd)}&{\rm\small(odd, odd, odd)}\\
    \midrule
    $G$&$G(V)$&$\CO(V)$&$G(V)$&$G(V)$\\
    $\cW$&$\cP_1(V)$&$\cN_1(V)$&$\cN_1^\alpha(V)$&$\cN_1^{\alpha}(V)$\\
    $\cW(U)$&$\cP_1(U)$&$\cN_1(U)$&$\cN_1^{\alpha\beta}(U)$&$\cN_1^{\alpha\eta}(U)$\\
    $\cW(U')$&$\cP_1(U')$&$\cN_1(U')$&$\cN_1^{\alpha\gamma'}(U')$&$\cN_1^{\alpha\eta'}(U')$\\
    \bottomrule
  \end{tabular}\label{T:XX}
  \end{table}

  {\sc Cases 1,2.}  In Case~1 of Table~\ref{T:XX}, $\cW=\cP_1(V)$
  and the parities  of $(n,n',q)$ are  arbitrary. Further,
  $\cW(U)$ equals $\{W\in \cP_1(V)\mid W\le U\}=\cP_1(U)$, and
  $\cW(U')=\cP_1(U')$. In Case~2 of Table~\ref{T:XX}, $\cW=\cN_1(V)$
  and similar reasoning shows that $\cW(U)=\cN_1(U)$ and $\cW(U')=\cN_1(U')$.
  Here $\cW$ is a $\CO(V)$-orbit, and not a $\GO(V)$-orbit, so our use of
  Lemma~\ref{L:DC} is valid.

  {\sc Case 3.} Here $n=\dim(U)$ is  even, and both $n'=\dim(U')$ and $q$
  are odd. Now $\cW=\cN^\alpha_1(V)$ is a $G(V)$-orbit.
  The definitions of $\cW(U)$
  and $\cW(U')$ show that  $\cW(U)=\cN^\alpha_1(V)\cap\cN_1(U)$ and
  $\cW(U')=\cN^\alpha_1(V)\cap\cN_1(U')$. Thus, by Lemma~\ref{L:int}
  and lines 2 and 3 of Table~\ref{T:Ant},
  the entries for $\cW(U)$ and $\cW(U')$ in Table~\ref{T:XX} are valid.
  
  {\sc Case 4.} In this case $n=2m+1$, $n'=2m'+1$ and $q$ are odd,
  and $\cW=\cN_1^\alpha(V)$ is a $G(V)$-orbit.
  By definition, $\cW(U)=\cN_1^\alpha(V)\cap\cN_1(U)$ and
  $\cW(U')=\cN_1^\alpha(V)\cap\cN_1(U')$. By Lemma~\ref{L:int}
  and line 4 of Table~\ref{T:Ant},
  the entries for $\cW(U)$ and $\cW(U')$ in Table~\ref{T:XX} are correct.

  Thus all entries of Table~\ref{T:XX} are valid. Substituting the
  values for $\cU'(W)$ and $\cW(U')$ from Table~\ref{T:XX} into
  Equation~\eqref{E:key} implies that
  $|\cU'(W)|/|\cU'|=|\cW(U')|/|\cW|$ holds
  for all $(W,U')\in \cW(U)\times \cU'$ as claimed.
  This justifies all the entries of Table~\ref{T:X} completing the proof.
\end{proof}

\goodbreak
\subsection{Singular \texorpdfstring{$1$}{}-subspaces and a formula for \texorpdfstring{$c_2$}{}}\quad
In Lemma~\ref{L:cP1} we establish the inequality in Equation~\eqref{E:rhoUO-P} involving the quantity $c_2$ and derive a formula for $c_2$. 

\begin{lemma}\label{L:cP1}
  Suppose that $n,n'\ge2$, and let $U\in \cU$ and $U'\in\cU'$ have
  intrinsic types $\tau$ and $\tau'$, respectively.
  Then
  \[
  \frac{\left|\bigcup_{W\in\cP_1(U)} \cU'(W)\right|}{|\cU'|}\le
  \frac{|\cP_1(U)||\cP_1(U')|}{|\cP_1(V)|}
  =\frac{c_2}{q^2}\quad
  \textup{where }c_2\vcentcolon=\frac{\gamma_n^\tau(q)\gamma_{n'}^{\tau'}(q)}
                          {(1-q^{-1})\gamma_{n+n'}^\eps(q)},
  \]
  and $\gamma_n^\tau(q)$ is defined by~\eqref{E:gamma}. Moreover,
\[
  \frac{c_2}{q} 
     \le X_2\vcentcolon= \frac{(1-q^{-1})
       (\frac{1}{1-q^{-1}}  + q^{-\lfloor\frac{n}{2}\rfloor+1})
 (\frac{1}{1-q^{-1}}     + q^{-\lfloor\frac{n'}{2}\rfloor+1})}
     {q (1-q^{-\lfloor \frac{n+n'}{2}\rfloor+1}) }.
     \]
\end{lemma}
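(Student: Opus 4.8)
The plan is to assemble three ingredients: the union bound, the double counting lemma in the form of Lemma~\ref{L:key}, and the explicit formula \eqref{E:cP1} together with the estimates of Lemma~\ref{L:gamma}. First I would note that $\cP_1(V)$ is a single $G(V)$-orbit (Witt's theorem), so Lemma~\ref{L:key} (the first column of Table~\ref{T:X}, valid for all parities of $n,n',q$) applies with $\cW=\cP_1(V)$ and gives $|\cU'(W)|/|\cU'|=|\cP_1(U')|/|\cP_1(V)|$ for \emph{every} $W\in\cP_1(U)$; in particular this ratio is independent of the choice of $W\in\cP_1(U)$. Hence the union bound yields
\[
\frac{\bigl|\bigcup_{W\in\cP_1(U)}\cU'(W)\bigr|}{|\cU'|}
\le\frac{\sum_{W\in\cP_1(U)}|\cU'(W)|}{|\cU'|}
=|\cP_1(U)|\cdot\frac{|\cP_1(U')|}{|\cP_1(V)|},
\]
which is the first inequality of the lemma.

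Next I would turn the right-hand side into the claimed closed form. Equation~\eqref{E:cP1}, applied to $U$, $U'$ and $V$ with their respective intrinsic types $\tau$, $\tau'$ and $\eps$, gives $|\cP_1(U)|=q^{n-2}\gamma_n^\tau(q)/(1-q^{-1})$, and analogously for $U'$ and $V$. Multiplying and dividing, the powers of $q$ combine to $q^{(n-2)+(n'-2)-(n+n'-2)}=q^{-2}$ and two of the three factors $(1-q^{-1})$ cancel, leaving exactly
\[
\frac{|\cP_1(U)||\cP_1(U')|}{|\cP_1(V)|}
=\frac{1}{q^2}\cdot\frac{\gamma_n^\tau(q)\gamma_{n'}^{\tau'}(q)}{(1-q^{-1})\gamma_{n+n'}^\eps(q)}
=\frac{c_2}{q^2}.
\]

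Finally, for the bound $c_2/q\le X_2$ I would apply Lemma~\ref{L:gamma} to obtain upper bounds for $\gamma_n^\tau(q)$ and $\gamma_{n'}^{\tau'}(q)$ and a lower bound for $\gamma_{n+n'}^\eps(q)$, the key point being that these hold \emph{uniformly} over the possible types. Writing $(1-q^{-1})\bigl(\tfrac1{1-q^{-1}}+q^{-k+1}\bigr)=1+(1-q^{-1})q^{-k+1}$, Lemma~\ref{L:gamma} gives, with $k=\lfloor n/2\rfloor$, that $\gamma_n^\tau(q)\le 1+(1-q^{-1})q^{-k+1}$ in the even case ($\gamma_{2k}^+$ is the largest and the quoted bound on it is precisely this, while $\gamma_{2k}^-<1$ is smaller) and in the odd case ($\gamma_{2k+1}^\circ<1<1+(1-q^{-1})q^{-k+1}$); likewise $\gamma_{n+n'}^\eps(q)>1-q^{-\lfloor(n+n')/2\rfloor+1}$ in all three cases. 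Since $n,n'\ge2$, we have $\lfloor(n+n')/2\rfloor\ge2$, so $1-q^{-\lfloor(n+n')/2\rfloor+1}\ge1-q^{-1}>0$ and the inequalities may be combined and then one factor $(1-q^{-1})$ cancelled to give
\[
\frac{c_2}{q}
\le\frac{(1-q^{-1})^2\bigl(\tfrac1{1-q^{-1}}+q^{-\lfloor n/2\rfloor+1}\bigr)\bigl(\tfrac1{1-q^{-1}}+q^{-\lfloor n'/2\rfloor+1}\bigr)}{q\,(1-q^{-1})\bigl(1-q^{-\lfloor(n+n')/2\rfloor+1}\bigr)}=X_2.
\]

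There is no serious obstacle here: the lemma is essentially an assembly of earlier results. The only point that needs care is this last step — one must check that the upper bound on $\gamma_n^\tau(q)$ is taken for the \emph{worst} type $\tau=+$ (exactly the right-hand inequality in Lemma~\ref{L:gamma}), and that the hypothesis $n,n'\ge2$ keeps the denominator $1-q^{-\lfloor(n+n')/2\rfloor+1}$ strictly positive so that the direction of the inequality is preserved. Everything else is routine bookkeeping with the intrinsic types $\tau,\tau',\eps$.
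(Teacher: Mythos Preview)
Your proof is correct and follows essentially the same route as the paper's: both combine the union bound with Lemma~\ref{L:key} (Case~1 of Table~\ref{T:XX}) to get the inequality, then substitute~\eqref{E:cP1} for the closed form, and finally apply Lemma~\ref{L:gamma} to bound $c_2/q$ by $X_2$. The paper inserts an explicit intermediate step, bounding $\gamma_n^\tau\le\gamma_{2\lfloor n/2\rfloor}^+$ and $\gamma_{n+n'}^\eps\ge\gamma_{2\lfloor(n+n')/2\rfloor}^-$ before invoking the flanking inequalities, whereas you read off the same conclusion directly; the content is identical.
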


Note that $\tau=\sigma$ if $n$ is even and $\tau=\circ$ if $n$ is odd. Similarly,  $\tau'=\sigma'$ if $n'$ is even and $\tau'=\circ$ if $n'$ is odd.

\begin{proof}
  Suppose that $G=G(V)$.
  The stabiliser $G_U$ of $U$ induces $G(U)$ on $U$ and so
  acts transitively on the set
  $\{\cU'(W)\mid W\in\cP_1(U)\}$. Thus $|\cU'(W)|$ 
  is independent of the choice of $W\in \cP_1(U)$. Hence
  \begin{equation}\label{E:ZZ}
    \left|\bigcup_{W\in\cP_1(U)} \cU'(W)\right|\le |\cP_1(U)||\cU'(W)|
    \qquad\textup{for a chosen $W\in\cP_1(U)$}.
  \end{equation}
  It follows from Lemma~\ref{L:key} that
  $|\cU'(W)|/|\cU'|=|\cP_1(U')/|\cP_1(V)|$  for all $W\in\cP_1(U)$.
  The first assertion now follows from equation~\eqref{E:cP1} since
  \[
    |\cP_1(U)|=\frac{q^{n-2}\gamma_n^\tau(q)}{1-q^{-1}},\; |\cP_1(U')|
    =\frac{q^{n'-2}\gamma_{n'}^{\tau'}(q)}{1-q^{-1}}
    \;\textup{and}\;
    |\cP_1(V)|=\frac{q^{n+n'-2}\gamma_{n+n'}^\eps(q)}{1-q^{-1}}\kern-0.2pt.
  \]
  Finally, note that, using Lemma~\ref{L:gamma}, and setting
  $m=\lfloor n/2\rfloor$ and $m'=\lfloor n'/2\rfloor$,

  \[
    \frac{c_2}{q} \vcentcolon=\frac{q|\cP_1(U)||\cP_1(U')|}{|\cP_1(V)|}
    = \frac{\gamma_n^\tau(q)\gamma_{n'}^{\tau'}(q)}
           {q(1-q^{-1})\gamma_{n+n'}^\eps(q)} 
    \le \frac{\gamma_{2m}^+(q)\gamma_{2m'}^{+}(q)}
             {q(1-q^{-1})\gamma_{2\lfloor\frac{n+n'}{2}\rfloor}^-(q)}, 
  \]
  and now the upper bound for ${c_2}/{q}$ follows immediately from
  Lemma~\ref{L:gamma}.
\end{proof}

\goodbreak
\subsection{Nonsingular \texorpdfstring{$1$}{}-subspaces and the quantity \texorpdfstring{$c_1$}{}}\quad

Here we tackle the second term in the upper bound for $\varphi$ in \eqref{E:rho2};
we write this term as $c_1/q$, and obtain an explicit upper bound in terms of $n, n', q$
which holds for all isometry types for $V$ and all subspaces types of $U$ and $U'$.
Our aim in this subsection is to prove the following lemma. 

\begin{lemma}\label{L:cN1}
  Suppose that $n,n'\ge 2$, and $\cU\vcentcolon=\Binom{V}{n}^\eps_\sigma$, $\cU'\vcentcolon=\Binom{V}{n'}^\eps_{\sigma'}$, and that $U\in\cU$. If $m=\lfloor n/2\rfloor$
  and $m'=\lfloor n'/2\rfloor$, then
  \[
  \frac{\left|\bigcup_{W\in\cN_1(U)} \cU'(W)\right|}{|\cU'|}
  = \frac{c_1}{q}\quad \mbox{where}\quad c_1
  \le X_1 \vcentcolon= \frac{(1+q^{-m})(1+ q^{-m'})}{1-q^{-\lfloor (n+n')/2\rfloor}}.
  \]
  Moreover, if $n=2m$ and $n'=2m'+1$, then $c_1\le Y_1$, and if
  $n=2m+1$  and $n'=2m'+1$, then $c_1\le Z_1$, where
  \[
    Y_1\vcentcolon=\frac{(1+q^{-m})(1+q^{-m-2m'})}{1-q^{-2m-2m'}}\qquad\mbox{and}\qquad
    Z_1\vcentcolon=\frac{1+q^{-m-m'}}{1-q^{-m-m'-1}}.
  \]
\end{lemma}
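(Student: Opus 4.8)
The plan is to bound the left-hand side using the union bound and the double-counting identity of Lemma~\ref{L:key}, then estimate the resulting ratio of cardinalities using the formulas \eqref{E:NP} and \eqref{E:cNalpha} together with the inequalities for $\gamma^\tau_n(q)$ in Lemma~\ref{L:gamma}. Concretely, since $G_U$ induces $G(U)$ on $U$, for any fixed $U'\in\cU'$ the quantity $|\cU'(W)|$ depends only on the $G(U)$-orbit of $W\in\cN_1(U)$. In the cases where $q$ is even, or where $\min\{n,n'\}$ is even in a way that makes $\cN_1(U)$ a single $G(U)$-orbit, we get $\bigl|\bigcup_{W\in\cN_1(U)}\cU'(W)\bigr|\le|\cN_1(U)|\,|\cU'(W)|$ for a chosen $W$, and then Lemma~\ref{L:key} gives $|\cU'(W)|/|\cU'|$ as a ratio $|\cN_1(\,\cdot\,)(U')|/|\cN_1(\,\cdot\,)(V)|$. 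Dividing through, $c_1/q$ is a product of three explicit rational functions of $q$ coming from \eqref{E:NP}, and bounding each numerator above (using $1\pm\tau q^{-m}\le 1+q^{-m}$) and the denominator below (using the lower bounds in Lemma~\ref{L:gamma} / \eqref{E:NP}) yields $c_1\le X_1$.

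When $q$ is odd and both $n,n'$ are even, we must split $\cN_1(U)$ into the two orbits $\cN_1^+(U),\cN_1^-(U)$, so $\bigl|\bigcup_{W\in\cN_1(U)}\cU'(W)\bigr|\le|\cN_1^+(U)|\,|\cU'(W^+)|+|\cN_1^-(U)|\,|\cU'(W^-)|$; applying Lemma~\ref{L:key} (Case~3 of Table~\ref{T:XX}) to each term and using \eqref{E:cNalpha}(a) gives two ratios whose sum, after the crude bound $1\pm q^{-m}\le 1+q^{-m}$ in each numerator and $1-q^{-(m+m')}$ below, again collapses to at most $X_1$. The refined bounds $Y_1$ (for $n$ even, $n'$ odd) and $Z_1$ (for $n,n'$ both odd) come from Cases~3 and~4 of Lemma~\ref{L:key}: in those cases $|\cN_1^{\alpha\gamma'}(U')|$ or $|\cN_1^{\alpha\eta'}(U')|$ is given exactly by \eqref{E:cNalpha}(b) with denominator of the form $q^{n+n'-1}(1-\eps q^{-\lfloor(n+n')/2\rfloor})$ from \eqref{E:NP} applied to $V$; writing out $c_1/q=|\cN_1^{\alpha\beta}(U)|\,|\cN_1^{\alpha\gamma'}(U')|/|\cN_1^\alpha(V)|$ (resp. the $\eta,\eta'$ version) and substituting these closed forms, the powers of $q$ cancel and one reads off $Y_1$ and $Z_1$ after bounding the $\pm$ signs appropriately. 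Note that for $n$ even, $n'$ odd we have $\lfloor n'/2\rfloor=m'$ and $\lfloor(n+n')/2\rfloor=m+m'$, while the exponent $m+2m'$ in $Y_1$ arises because $|\cN_1^{\alpha\gamma'}(U')|$ with $U'$ odd-dimensional uses $q^{2m'}(1+q^{-m'})$ rather than a $q^{n'-1}$ term, and one tracks the exact exponent through $|\cN_1^\alpha(V)|$ for $V$ of odd dimension $2(m+m')+1$.

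The main obstacle I expect is bookkeeping rather than a conceptual difficulty: one must correctly match, in each of the four parity cases, the orbit decomposition of $\cN_1(U)$, the correct row of Table~\ref{T:X}/\ref{T:Ant} giving $|\cU'(W)|/|\cU'|$, and the correct instance of \eqref{E:NP} or \eqref{E:cNalpha} for each of $U$, $U'$, $V$ — keeping in mind that when $\dim V$ is odd the type-labels $\alpha$ attached to $\cN_1^\alpha(V)$ refer (by Lemma~\ref{L:int}) to the type of $W^\perp$, so the signs $\beta,\gamma',\eta,\eta'$ must be carried through, and they disappear only after the crude bound $1\pm q^{-\ell}\le 1+q^{-\ell}$ is applied. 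A secondary subtlety is verifying that, after summing the two orbit contributions in the $q$ odd, $n,n'$ even case, the bound one gets is genuinely $\le X_1$ and not something larger; this should follow because $|\cN_1^+(U)|+|\cN_1^-(U)|=|\cN_1(U)|$ and the two ratios $|\cN_1^{\alpha\gamma'}(U')|/|\cN_1^\alpha(V)|$ are each at most $\tfrac12\cdot\frac{(1+q^{-m'})q^{2m'}}{\ \cdot\ }$-type quantities bounded uniformly, so their weighted sum telescopes back to the single-orbit estimate.
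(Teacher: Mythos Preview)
Your overall framework is correct and matches the paper's: union bound over $G(U)$-orbits on $\cN_1(U)$, then Lemma~\ref{L:key} to convert $|\cU'(W)|/|\cU'|$ into a ratio of $|\cN_1^{\,\cdot}(U')|/|\cN_1^{\,\cdot}(V)|$, then substitute \eqref{E:NP}/\eqref{E:cNalpha} and bound signs. Two points need correction.

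\textbf{Case (i), $n,n'$ both even.} The paper does \emph{not} split into orbits here. It uses $G=\CO(V)$, which is transitive on $\cN_1(V)$, so Case~2 of Table~\ref{T:XX} applies for all $q$ and yields directly
\[
  \frac{c_1}{q}\ \le\ \frac{|\cN_1(U)|\,|\cN_1(U')|}{|\cN_1(V)|}
  =\frac{q^{-1}(1-\sigma q^{-m})(1-\sigma' q^{-m'})}{1-\eps q^{-m-m'}}\ \le\ \frac{X_1}{q}.
\]
Your proposed split for $q$ odd, $n,n'$ even would work (via line~1 of Table~\ref{T:Ant}, not Case~3 of Table~\ref{T:XX}, which is for $n$ even, $n'$ odd), but since $|\cN_1^{\pm}(U)|$ is independent of the sign here, the two halves just recombine to the same expression; the $\CO(V)$ route is cleaner.

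\textbf{The derivation of $Y_1$ and $Z_1$.} This is where your sketch has a real gap. You write $c_1/q$ as a single term $|\cN_1^{\alpha\beta}(U)|\,|\cN_1^{\alpha\gamma'}(U')|/|\cN_1^\alpha(V)|$, but it is the \emph{sum over $\alpha\in\{-,+\}$} of such terms, and the specific exponents in $Y_1,Z_1$ come precisely from carrying out that sum. In Case~(ii) ($n=2m$, $n'=2m'+1$), after substituting (\ref{E:cNalpha}a,b) one must evaluate
\[
  \sum_{\alpha\in\{-,+\}}\frac{1+\alpha\gamma' q^{-m'}}{1+\alpha q^{-m-m'}}
  \;=\;\frac{2\bigl(1-\gamma' q^{-m-2m'}\bigr)}{1-q^{-2m-2m'}},
\]
and \emph{this} identity is the source of both the exponent $m+2m'$ and the denominator $1-q^{-2m-2m'}$ in $Y_1$; it does not come from the individual formula for $|\cN_1^{\alpha\gamma'}(U')|$ (your remark that ``$q^{2m'}(1+q^{-m'})$ rather than a $q^{n'-1}$ term'' is not a distinction, since $n'-1=2m'$). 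Similarly, in Case~(iii) ($n=2m+1$, $n'=2m'+1$, so $\dim V=2(m+m'+1)$ is even and (\ref{E:cNalpha}a) applies to $V$), the relevant identity is
\[
  \sum_{\alpha\in\{-,+\}}(1+\alpha\eta q^{-m})(1+\alpha\eta' q^{-m'})
  \;=\;2\bigl(1+\eta\eta' q^{-m-m'}\bigr),
\]
which produces the numerator $1+q^{-m-m'}$ of $Z_1$ after bounding $\eta\eta'\le1$. Without these two summation steps you cannot read off $Y_1$ or $Z_1$; bounding each $\alpha$-term crudely before summing gives only the weaker $X_1$.
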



\begin{proof}
Write $c_1/q\vcentcolon=|\cup_{W\in\cN_1(U)} \cU'(W)|/|\cU'|$.  We
divide the proof into three cases depending on the parities of $n$ and
$n'$. We write $n=2m$ or $2m+1$, and $n'=2m'$ or $2m'+1$, depending on
their parities, where $m, m'$ are positive integers. As mentioned
above, if precisely one of $n, n'$ is odd (and hence also $q$ is odd)
then we may assume that $n'$ is odd. Thus our cases are: (i) $n, n'$
both even; (ii) $n$ even and $n', q$ odd; and (iii) $n, n', q$ all
odd.

\medskip\noindent
\emph{Case (i): $n=2m, n'=2m'$. We prove that $c_1\le X_1$.}

\medskip\noindent
\emph{Proof of Case (i):}  
  Here the conformal orthogonal group $\CO(V)$ acts transitively 
  on $\cN_1(V)$, and the stabiliser $\CO(V)_U$ induces a transitive group
  on $\cN_1(U)$. Hence $|\cup_{W\in\cN_1(U)} \cU'(W)|\le |\cN_1(U)|\cdot|\cU'(W)|$
  for a chosen $W\in\cN_1(U)$. Moreover, by Lemma~\ref{L:key},
  $|\cU'(W)|/|\cU'|=|\cN_1(U')|/|\cN_1(V)|$, and hence
  $c_1/q\le  |\cN_1(U)|\cdot|\cN_1(U')|/|\cN_1(V)|$. 
  Using~\eqref{E:NP} three times gives
  \begin{align*}
   \frac{c_1}{q} &\le \frac{q^{n-1}(1-\sigma q^{-m})q^{n'-1}(1-\sigma' q^{-m'})}
          {q^{n+n'-1}(1-\eps q^{-m+m'})}\\
    &=\frac{q^{-1}(1-\sigma q^{-m})(1-\sigma' q^{-m'})}
          {1-\eps q^{-m+m'}}\le \frac{(1+ q^{-m})(1+ q^{-m'})}
          {q(1- q^{-m+m'})}=\frac{X_1}{q}. 
  \end{align*}

\medskip
Since the approaches for Cases (ii) and (iii) are similar, we begin by
considering them together. In both cases, $n'=2m'+1$ and $q$ is odd, and
for  $G=G(V)$, the stabiliser $G_U$ induces $G(U)$ on $U$ (by Witt's Theorem),
and hence $G_U$ has two orbits $\cN_1^\xi(U)$ on   $\cN_1(U)$, where
$\xi\in\{-,+\}$. For each~$\xi$,   the cardinality of $\cU'(W)$ is
independent of the choice of $W\in\cN_1^{\,\xi}(U)$, and we choose
$W_\xi\in\cN_1^{\,\xi}(U)$.  Then
  \begin{equation}\label{E:c1}
  \frac{c_1}{q}  \le\sum_{\xi\in\{-,+\}}
    \frac{\left|\bigcup_{W\in\cN_1^{\,\xi}(U)} \cU'(W)\right|}{|\cU'|}
  \le\sum_{\xi\in\{-,+\}}\frac{|\cN_1^{\,\xi}(U)|\cdot |\cU'(W_\xi)|}{|\cU'|}.  
  \end{equation}
For each $\xi$, $|\cN^{\,\xi}_1(U)|$ is given by~\eqref{E:cNalpha},
and we use Lemma~\ref{L:key} to determine $|\cU'(W_{\xi})|/|\cU'|$,
but for clarity we now continue with Cases (ii) and (iii) separately. 

\medskip\noindent
\emph{Case (ii): $n=2m, n'=2m'+1$, $q$ odd. We prove that $c_1\le Y_1\le X_1$.}

\medskip\noindent
\emph{Proof of Case (ii):}  
  Now $U$ has intrinsic type $\sigma$ by Table~\ref{T:type}, and so 
  $|\cN^{\,\xi}_1(U)|=q^{n-1}(1-\sigma q^{-m})/2$, by (\ref{E:cNalpha}a), for each $\xi$.
  Let $\beta=\delta(V)(-1)^{(m+m')(q-1)/2}$, with $\delta$ as in \eqref{E:delta}, 
  and write $\xi=\alpha\beta$, so $\alpha\in\{-,+\}$. Then it follows from 
  Lemma~\ref{L:key}   that  
  $|\cU'(W_{\alpha\beta})|/|\cU'|=|\cN_1^{\alpha\gamma'}(U')|/|\cN_1^\alpha(V)|$,
  with $\gamma'$ as in the caption for Table~\ref{T:X}.
  Hence by \eqref{E:c1}, and using (\ref{E:cNalpha}a) and
  (\ref{E:cNalpha}b), $c_1/q$ is at most
  \begin{align*}
    \sum_{\alpha\in\{-,+\}}
    \frac{|\cN_1^{\alpha\beta}(U)|\cdot |\cN_1^{\alpha\gamma'}(U')|}{|\cN_1^\alpha(V)|}
    &=\sum_{\alpha\in\{-,+\}}
    \frac{q^{n-1}(1-\sigma q^{-m})q^{n'-1}(1+\alpha\gamma' q^{-m'})}
         {2q^{n+n'-1}(1+\alpha q^{-m-m'})}\\
    &=\frac{q^{-1}(1-\sigma q^{-m})}{2}\sum_{\alpha\in\{-,+\}}
         \frac{1+\alpha\gamma' q^{-m'}}{1+\alpha q^{-m-m'}}\\
    &=\frac{q^{-1}(1-\sigma q^{-m})(1-\gamma' q^{-m-2m'})}{1-q^{-2m-2m'}}\\
    &\le \frac{(1+ q^{-m})(1+ q^{-m-2m'})}{q(1-q^{-2m-2m'})}\\
    &\le \frac{(1+ q^{-m})
    (1+ q^{-m'})}{q(1-q^{-m-m'})},
  \end{align*}
  where the last inequality is equivalent to
  $1+q^{-m-2m'}\le (1+q^{-m'})(1+q^{-m-m'})$, and this is clearly true for
  all positive integers $m, m'$. Hence $c_1\le Y_1\le X_1$.

\medskip\noindent
\emph{Case (iii): $n=2m+1, n'=2m'+1$, $q$ odd. We prove that
$c_1\le Z_1\le X_1$.}

\medskip\noindent
\emph{Proof of Case (iii):}
  For each $\xi\in\{-,+\}$, we have $|\cN^{\,\xi}_1(U)|=q^{n-1}(1+\xi
  q^{-m})/2$, by (\ref{E:cNalpha}b).  Let $\eta, \eta'$ be as in the
  caption for Table~\ref{T:X}, and let $\xi=\alpha\eta$, so
  $\alpha\in\{-,+\}$. Then it follows from Lemma~\ref{L:key} that
  $|\cU'(W_{\alpha\eta})|/|\cU'|=
  |\cN_1^{\alpha\eta'}(U')|/|\cN_1^\alpha(V)|$.  Hence, by
  \eqref{E:c1}, and using (\ref{E:cNalpha}a) for $|\cN_1^\alpha(V)|$
  and (\ref{E:cNalpha}b) for $|\cN_1^{\alpha\eta'}(U')|$,
   \begin{align*}
    c_1/q &\le \sum_{\alpha\in\{-,+\}}
    \frac{|\cN_1^{\alpha\eta}(U)|\cdot |\cN_1^{\alpha\eta'}(U')|}{|\cN_1^\alpha(V)|}\\
    &=\kern-10pt\sum_{\alpha\in\{-,+\}}
    \frac{q^{n-1}(1+\alpha\eta q^{-m})q^{n'-1}(1+\alpha\eta' q^{-m'})}
         {2q^{n+n'-1}(1-\eps q^{-m-m'-1})}\\
    &=\frac{1}{2q(1-\eps q^{-m-m'-1})}\sum_{\alpha\in\{-,+\}}
         (1+\alpha\eta q^{-m})(1+\alpha\eta' q^{-m'})\\
    &=\frac{(1+\eta\eta'q^{-m-m'})}{q(1-\eps q^{-m-m'-1})}
     \le \frac{(1+ q^{-m-m'})}{q(1-q^{-m-m'-1})}
     \le \frac{(1+ q^{-m})(1+ q^{-m'})}{q(1-q^{-m-m'-1})},
  \end{align*}
  where the last inequality holds because
  $1+q^{-m-m'}\le (1+q^{-m})(1+q^{-m'})$, for all positive integers $m, m'$.
  Hence $c_1\le Z_1\le X_1$.
\end{proof}

\subsection{Proof of Theorem~\texorpdfstring{$\ref{T:O}$}{}}


We use the following lemma, which is easy to verify.
\begin{lemma}\label{L:boundprod}
  Let $q, x, y,\ell\in \Z$  satisfy $q\ge 2$, $0 \le x \le y$
  and $0 \le \ell \le x$. If $a$ is a positive real number, then
  \[
  (a + q^{-x})(a+q^{-y}) \le (a + q^{-x+\ell})(a+q^{-y-\ell}).
  \]
\end{lemma}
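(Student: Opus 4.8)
The plan is to expand both products, cancel the terms that agree, and reduce the claim to a one-variable inequality. First I would record that the constant terms match: since $(x-\ell)+(y+\ell)=x+y$, expanding gives
\[
(a+q^{-x})(a+q^{-y})=a^2+a\bigl(q^{-x}+q^{-y}\bigr)+q^{-x-y},
\]
\[
(a+q^{-x+\ell})(a+q^{-y-\ell})=a^2+a\bigl(q^{-x+\ell}+q^{-y-\ell}\bigr)+q^{-x-y}.
\]
Subtracting, the $a^2$ and $q^{-x-y}$ terms cancel, so (as $a>0$) the assertion is equivalent to $q^{-x}+q^{-y}\le q^{-x+\ell}+q^{-y-\ell}$.

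The key step is then to factor the difference of the two sides. Using $1-q^{-\ell}=q^{-\ell}(q^{\ell}-1)$, I would write
\[
\bigl(q^{-x+\ell}+q^{-y-\ell}\bigr)-\bigl(q^{-x}+q^{-y}\bigr)
=q^{-x}(q^{\ell}-1)-q^{-y}(1-q^{-\ell})
=(q^{\ell}-1)\bigl(q^{-x}-q^{-y-\ell}\bigr).
\]
It then remains only to observe that both factors on the right are nonnegative: $q^{\ell}-1\ge 0$ because $\ell\ge 0$ and $q\ge 2$, while $q^{-x}-q^{-y-\ell}\ge 0$ because $x\le y\le y+\ell$. Hence the difference is nonnegative, which gives the additive inequality, and multiplying back by $a>0$ proves the lemma.

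I do not anticipate any genuine obstacle here. The only points that need care are the exponent bookkeeping — checking that the outer exponents $-x,-y$ and the inner exponents $-x+\ell,-y-\ell$ have the same sum, so that the constant terms really do cancel — and the sign analysis of the two factors, both of which are immediate from the hypotheses $0\le\ell\le x\le y$ and $q\ge 2$. The hypothesis $a>0$ is used only to divide through by $a$; keeping the factor $a$ throughout yields the same conclusion, so in fact $a\ge 0$ would suffice.
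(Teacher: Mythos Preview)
Your proof is correct and takes essentially the same approach as the paper: both expand and cancel the common $a^2$ and $q^{-x-y}$ terms to reduce to the additive inequality $q^{-x}+q^{-y}\le q^{-x+\ell}+q^{-y-\ell}$. Your factorisation $(q^{\ell}-1)(q^{-x}-q^{-y-\ell})\ge0$ is a touch cleaner than the paper's case split on $x=y$ versus $x<y$, but the underlying idea is the same.
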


\begin{proof} Clearly, the result holds for $\ell=0.$ Suppose that $\ell\ge 1$.
  As $a>0$, the claimed inequality is equivalent to 
  $q^{-x}+q^{-y} \le  q^{-x+\ell}+q^{-y-\ell}$ which is equivalent
  to  $q^y + q^x \le q^{y+\ell} + q^{x-\ell}$.
  This is true when $x=y$ since $2\le q^{\ell} + q^{-\ell}$ holds for $q\ge 2$.
  It also holds for $x < y$, since $q^y + q^x <q^{y+1}< q^{y+\ell}+q^{x-\ell}$.
\end{proof}



\begin{proof}[Proof of Theorem~$\ref{T:O}$]  
  If $n$ or $n'$ equals 1, then $q$ is odd and the result follows from
  Lemma~\ref{n=1}. Suppose now that $n,n'\ge2$.
  Let $m=\lfloor\frac{n}{2}\rfloor$ and $m'=\lfloor\frac{n'}{2}\rfloor$, so that
  $m, m'\ge 1$ and recall that $q\ge3$ by assumption.
  Then Equation~\eqref{E:rho2}, together with~Lemmas~\ref{L:cP1} and~\ref{L:cN1}, 
  shows that $\varphi\le (X_1+X_2)/q$ where
  \begin{align*}
    X_1 &= \frac{(1+q^{-m})(1+ q^{-m'})}{1-q^{-\lfloor (n+n')/2\rfloor}},\\  
    X_2 &= \frac{(1-q^{-1})(\frac{1}{1-q^{-1}}  + q^{-m+1})
            (\frac{1}{1-q^{-1}}+ q^{-m'+1})}{q (1-q^{-\lfloor \frac{n+n'}{2}\rfloor+1})}.
  \end{align*}  
  We will show first that $c\vcentcolon=X_1+X_2\le \frac{43}{16}$
  whenever $m+m'\ge3$, and consider smaller dimensional cases later
  in the proof.  Recall that $q\ge3$.
  
  Applying Lemma~\ref{L:boundprod} with
  $x=m$, $y=m'$, $\ell=m-1$ and $a=1$ gives
  \[
    (1+q^{-m})(1+q^{-m'})\le (1+q^{-1})(1+q^{-m-m'+1}),
  \]
  and applying Lemma~\ref{L:boundprod} with 
  $x=m-1$, $y=m'-1$, $\ell=m-1$ and $a=\frac{1}{1-q^{-1}}$ yields
  \[
  \left(a + q^{-m+1}\right)
  \left(a + q^{-m'+1}\right)
  \le  \left(a + 1\right)
     \left(a + q^{-m-m'+2}\right).
  \]
  Thus, if $m+m'\ge 3$, so that $\lfloor \frac{n+n'}{2}\rfloor\ge3$, then
  \begin{align*}
  c=X_1+X_2 &\le
  \frac{(1+q^{-1})(1+   q^{-2})}{1-q^{-3}} + 
  \frac{(1-q^{-1})(\frac{1}{1-q^{-1}}+1)(\frac{1}{1-q^{-1}} + q^{-1})}
     {q(1 - q^{-2})}\\
  &\le \frac{20}{13} + \frac{55}{48} < \frac{43}{16}.
  \end{align*}
  We may therefore assume that $m=m'=1$, so $n,n'\in\{2,3\}$.

  Consider the case $n=n'=3$.
  By Equation~\eqref{E:rho2} and
  Lemmas~\ref{L:gamma}, \ref{L:cP1} and~\ref{L:cN1} we have
  \begin{align*}
    q\varphi&\le c_1+\frac{c_2}{q}
    \le Z_1+\frac{\gamma_3^\circ(q)^2}{q(1-q^{-1})\gamma_6^{-}(q)}\\
    &=\frac{1+q^{-2}}{1-q^{-3}}
      +\frac{(1-q^{-2})^2}{q(1-q^{-1})(1-q^{-2})(1+q^{-3})}\\
    &=\frac{1+q^{-2}}{1-q^{-3}}
      +\frac{1+q^{-1}}{q(1+q^{-3})}\le\frac{15}{13}+\frac{3}{7}<\frac{43}{16}.
  \end{align*}
  Next suppose that $n=2$ and $n'=3$. Lemmas~\ref{L:gamma}, \ref{L:cP1}
  and~\ref{L:cN1} give
  \begin{align*}
    q\varphi&\le c_1+\frac{c_2}{q}
    \le Y_1+\frac{\gamma_2^{+}(q)\gamma_3^\circ(q)}{q(1-q^{-1})\gamma_5^{\circ}(q)}\\
    &=\frac{(1+q^{-1})(1+q^{-3})}{1-q^{-4}}
      +\frac{2(1-q^{-1})(1-q^{-2})}{q(1-q^{-1})(1-q^{-4})}\\
    &=\frac{1+q^{-3}}{(1-q^{-1})(1+q^{-2})}
      +\frac{2}{q(1+q^{-2})}\le\frac{7}{5}+\frac{3}{5}<\frac{43}{16}.
  \end{align*}
  A similar argument handles that case $n=3$ and $n'=2$.
  
  Finally, suppose that $n=n'=2$. If $q\ge4$, then
  Lemmas~\ref{L:gamma}, \ref{L:cP1} and~\ref{L:cN1} show that
  \begin{align*}
    q\varphi&\le c_1+\frac{c_2}{q}
    \le X_1+\frac{\gamma_2^{+}(q)^2}{q(1-q^{-1})\gamma_4^{-}(q)}\\
    &=\frac{(1+q^{-1})^2}{1-q^{-2}}
    +\frac{4(1-q^{-1})^2}{q(1-q^{-1})(1-q^{-1})(1+q^{-2})}
    \le\frac{5}{3}+\frac{16}{17}
    <\frac{43}{16}.
  \end{align*}
  For the remaining case $(n,n',q)=(2,2,3)$ the proportion
  $\varphi$ can be shown by using {\sc GAP}~\cites{gap,FinInG}
  to (easily) satisfy the bound $\varphi<43/16$.
\end{proof}

\begin{remark}\label{R:322}
When both $m=\lfloor n/2\rfloor$ and
  $m'=\lfloor n'/2\rfloor$ are large, the expressions for $X_1$ and $X_2$ in Lemmas~\ref{L:cP1} and~\ref{L:cN1} satisfy $\lim_{m,m'\to\infty}X_1=1$ and
  $\lim_{m,m'\to\infty}X_2=\frac{1}{q-1}$, and hence 
  $\lim_{m,m'\to\infty}(X_1+X_2)=1/(1-q^{-1})$. Hence, with the level of approximation we use for our estimates  it
  is impossible to bound  $1/(1-q^{-1})$ away from $q$ when $q=2$. To obtain the result for $q=2$ in the orthogonal case, we would need the next level of approximation embarked on for the symplectic case in Subsection~\ref{s:sp2} (and many more pages of argument as different isometry types must be considered). \hfill$\diamond$
\end{remark}

\begin{proof}[Proof of Theorem~$\ref{T:main1}$]
  The proof follows readily from the strategy outlined in
  Section~\ref{S:strategy} and
  Theorems~\ref{T:Sp}, \ref{T:U} and~\ref{T:O}.
\end{proof}


\section*{Acknowledgements}
We thank Jesse Lansdown for his help with computational investigations
using the  FinInG package  \cite{FinInG} in \textsf{GAP}.   The second
and  third   authors  thank  the  Hausdorff   Research  Institute  for
Mathematics,  University  of  Bonn,  for its  hospitality  during  the
International  Workshop on  Logic and  Algorithms in  Group Theory  in
2018.   We thank  Gabriele  Nebe for  helpful  discussions during  the
workshop.  The  authors gratefully acknowledge Australian Research
Council  Discovery Project Grant DP190100450.
The   second    author   acknowledges   funding   by    the   Deutsche
Forschungsgemeinschaft  (DFG,  German  Research  Foundation)  - Project
286237555 - TRR 195. We are grateful to the two referees for
their helpful suggestions.


\begin{thebibliography}{99}


\bib{FinInG}{manual}{ 
  author =           {Bamberg, J.},
  author =           {Betten, A},
  author =           {De Beule, J.},
  author =           {Cara, P.},
  author =           {Lavrauw, M.},
  author =           {Neunhoeffer, M.},
  title =            {{FinInG}, Finite Incidence Geometry, {V}ersion 1.4.1},
  year =             {2018},
  note =             {Refereed GAP package},
}


\bib{HMtheorem}{article}{
   author={Blokhuis, A.},
   author={Brouwer, A. E.},
   author={Chowdhury, A.},
   author={Frankl, P.},
   author={Mussche, T.},
   author={Patk\'{o}s, B.},
   author={Sz\H{o}nyi, T.},
   title={A Hilton-Milner theorem for vector spaces},
   journal={Electron. J. Combin.},
   volume={17},
   date={2010},
   number={1},
   pages={Research Paper 71, 12},
}

\bib{DeBoeck}{article}{
    AUTHOR = {De Boeck, M.},
    AUTHOR = {Vandendriessche, P.},
     TITLE = {On the dual code of points and generators on the {H}ermitian
              variety {$\scr{H}(2n+1,q^2)$}},
   JOURNAL = {Adv. Math. Commun.},
    VOLUME = {8},
      YEAR = {2014},
    NUMBER = {3},
     PAGES = {281--296},
      ISSN = {1930-5346},
       URL = {https://doi.org/10.3934/amc.2014.8.281},
}
		
\bib{Cla1992}{article}{
   author={Clark, W. Edwin},
   title={Matching subspaces with complements in finite vector spaces},
   journal={Bull. Inst. Combin. Appl.},
   volume={6},
   date={1992},
   pages={33--38},
   issn={1183-1278},
}

\bib{DLLO13}{article}{
   author={Dietrich, Heiko},
   author={Leedham-Green, C. R.},
   author={L\"{u}beck, Frank},
   author={O'Brien, E. A.},
   title={Constructive recognition of classical groups in even
   characteristic},
   journal={J. Algebra},
   volume={391},
   date={2013},
   pages={227--255},
   issn={0021-8693},
}


\bib{FS}{book}{
   author={Flajolet, Philippe},
   author={Sedgewick, Robert},
   title={Analytic combinatorics},
   publisher={Cambridge University Press, Cambridge},
   date={2009},
   pages={xiv+810},
   isbn={978-0-521-89806-5},
}
\bib{gap}{manual}{
   author={The GAP~Group},
   title= {GAP -- Groups, Algorithms, and Programming, Version 4.11.1},
   year={2021},
   url= {\url{https://www.gap-system.org}},
}

\bib{random}{article}{
  author={Glasby, S.P.},
  author={Niemeyer, Alice C.},
  author={Praeger, Cheryl E.},
  title={Random generation of direct sums of finite non-degenerate subspaces},
  journal={Linear Algebra Appl.  (to appear)},
  url= {\url{https://arxiv.org/abs/2111.02198}},
}

\bib{stingray}{article}{
   author={Glasby, S.P.},
   author={L\"ubeck, Frank},
   author={Niemeyer, Alice C.},
   author={Praeger, Cheryl E.},
   title={The probability that two elements with large
          1-eigenspaces generate a classical group},
   note={In preparation},
}

\bib{GLW}{article}{
   author={Gong, Chao},
   author={Lv, Benjian},
   author={Wang, Kaishun},
   title={The Hilton-Milner theorem for the distance-regular graphs of
   bilinear forms},
   journal={Linear Algebra Appl.},
   volume={515},
   date={2017},
   pages={130--144},
   issn={0024-3795},
}

\bib{GW}{article}{
   author={Guo, Jun},
   author={Wang, Kaishun},
   title={Lattices generated by two orbits of subspaces under finite
classical groups},
   journal={Finite Fields Appl.},
   volume={15},
   date={2009},
   pages={236--245},
}

\bib{Hsieh}{article}{
   author={Hsieh, W. N.},
   title={Intersection theorems for systems of finite vector spaces},
   journal={Discrete Math.},
   volume={12},
   date={1975},
   pages={1--16},
   issn={0012-365X},
}

\bib{IM1995}{article}{
   author={Ipsen, Ilse C. F.},
   author={Meyer, Carl D.},
   title={The angle between complementary subspaces},
   journal={Amer. Math. Monthly},
   volume={102},
   date={1995},
   number={10},
   pages={904--911},
   issn={0002-9890},
}
	

\bib{KL}{book}{
   author={Kleidman, Peter},
   author={Liebeck, Martin},
   title={The subgroup structure of the finite classical groups},
   series={London Mathematical Society Lecture Note Series},
   volume={129},
   publisher={Cambridge University Press, Cambridge},
   date={1990},
   pages={x+303},
   isbn={0-521-35949-X},
}

\bib{Songpon}{article}{
    AUTHOR = {Meemark, Yotsanan},
    AUTHOR = {Sriwongsa, Songpon},
     TITLE = {Orthogonal graphs over finite commutative rings of odd
              characteristic},
   JOURNAL = {Finite Fields Appl.},
    VOLUME = {40},
      YEAR = {2016},
     PAGES = {26--45},
      ISSN = {1071-5797},
       URL = {https://doi.org/10.1016/j.ffa.2016.03.001},
}
		

		
\bib{NP}{article}{
   author={Neumann, Peter M.},
   author={Praeger, Cheryl E.},
   title={Cyclic matrices over finite fields},
   journal={J. London Math. Soc. (2)},
   volume={52},
   date={1995},
   number={2},
   pages={263--284},
   issn={0024-6107},
}

\bib{PSY2015}{article}{
   author={Praeger, Cheryl E.},
   author={Seress, \'{A}kos},
   author={Yal\c{c}\i nkaya, \c{S}\"{u}kr\"{u}},
   title={Generation of finite classical groups by pairs of elements with
   large fixed point spaces},
   journal={J. Algebra},
   volume={421},
   date={2015},
   pages={56--101},
   issn={0021-8693},
}
	
\bib{JJ2016}{article}{
   author={Rad, Nader Jafari},
   author={Jafari, Sayyed Heidar},
   title={A note on the intersection graph of subspaces of a vector space},
   journal={Ars Combin.},
   volume={125},
   date={2016},
   pages={401--407},
   issn={0381-7032},
}

\bib{Taylor}{book}{
   author={Taylor, Donald E.},
   title={The geometry of the classical groups},
   series={Sigma Series in Pure Mathematics},
   volume={9},
   publisher={Heldermann Verlag, Berlin},
   date= {1992},
   pages={xii+229},
   isbn={3-88538-009-9},
}

\bib{Yar2013}{article}{
   author={Yaraneri, Erg\"{u}n},
   title={Intersection graph of a module},
   journal={J. Algebra Appl.},
   volume={12},
   date={2013},
   number={5},
   pages={1250218, 30},
   issn={0219-4988},
}


\bib{Wolfram}{book}{
   author={Wolfram, Stephen},
   title={The Mathematica$^\circledR$ book},
   edition={4},
   publisher={Wolfram Media, Inc., Champaign, IL; Cambridge University
   Press, Cambridge},
   date={1999},
   pages={xxvi+1470},
   isbn={1-57955-004-5},
   isbn={0-521-64314-7},
}
		
\end{thebibliography}
\end{document}